%---------------------------------------------------------------------------
% Beginning of journal.tex
%---------------------------------------------------------------------------
%
% AMS-LaTeX 1.2 sample file for journals, based on amsart.cls.
%
% Replace amsart by the documentclass for the target journal, e.g. tran-l.
%
\documentclass[a4paper,oneside]{amsart}
\usepackage{amssymb}
\usepackage{hyperref}
\usepackage{xcolor}
\usepackage{setspace}
\newtheorem{thm}{Theorem}[section]
\newtheorem{lem}[thm]{Lemma}
\newtheorem{cor}[thm]{Corollary}
\newtheorem{rem}[thm]{Remark}
 
\newtheorem{defn}[thm]{Definition}
\newtheorem{prop}[thm]{Proposition}
\def\Xint#1{\mathchoice
    {\XXint\displaystyle\textstyle{#1}}%
     {\XXint\textstyle\scriptstyle{#1}}%
     {\XXint\scriptstyle\scriptscriptstyle{#1}}%
     {\XXint\scriptstyle\scriptscriptstyle{#1}}%
	\!\int}
\def\XXint#1#2#3{{\setbox0=\hbox{$#1{#2#3}{\int}$}
	\vcenter{\hbox{$#2#3$}}\kern-.5\wd0}}
\newcommand{ \mint }{ \,\Xint- }

\newcommand{\Lra}{\Longrightarrow}
\newcommand{\br}{\mathbb{R}}

\newcommand{\sskip}{\smallskip}

\newcommand{\qqquad}{ \qquad \qquad}
\newcommand{\dotcup}{ \mathop{\dot{\cup}} }

\numberwithin{equation}{section}

\hypersetup{ colorlinks = true, urlcolor = blue, linkcolor = blue, citecolor = red }

\begin{document}
	
\title[linear elliptic systems from composite materials]{Gradient type estimates for linear elliptic systems from composite materials}
\everymath{\displaystyle}

\begin{abstract}
In this paper, we consider linear elliptic systems from composite materials where the coefficients depend on the shape and might have the discontinuity between the subregions.
We derive a function which is related to the gradient of the weak solutions and which is not only locally piecewise H\"{o}lder continuous but locally H\"{o}lder continuous.
The gradient of the weak solutions can be estimated by this derived function and we also prove local piecewise gradient H\"{o}lder continuity which was obtained by the previous results.
\end{abstract}

\author{Youchan Kim}
\email[Youchan Kim]{youchankim@uos.ac.kr}
\address[Youchan Kim]{Department of Mathematics, University of Seoul, Seoul 02504, Republic of Korea}

\author{Pilsoo Shin}
\email[Pilsoo Shin]{shinpilsoo.math@kgu.ac.kr}
\address[Pilsoo Shin]{Department of Mathematics, Kyonggi University, Suwon 16227, Republic of Korea}

\maketitle

\section{Introductions}

In this paper, we study linear elliptic systems from composite materials. 
First, we describe our model problem in this paper. For composite materials, the physical characteristics of the medium are divided into a finite number of components or subregions. So let $\Omega \subset \br^{n}$ $(n \geq 2)$ be a bounded domain and $\Omega_{1}, \cdots, \Omega_{l} \subset \Omega$ be the mutually disjoint subregions (of $\Omega)$ with $\Omega_{0} := \Omega \setminus (\Omega_{1} \cup \cdots \cup \Omega_{l})$. Here, the subregions $\Omega_{0}$, $\Omega_{1}$, $\cdots$, $\Omega_{l}$ represent each component of a composite material $\Omega$. Since the physical characteristics are regular in each component $\Omega_{0}, \cdots, \Omega_{l}$, we consider the following linear elliptic systems.

For $C^{1,\gamma}$-domains $\Omega_{1}$, $\cdots$, $\Omega_{l}$ and $\Omega$, let $u \in W^{1,2} \left( \Omega, \mathbb{R}^{N} \right)$ be a weak solution of
\begin{equation}\label{}
\partial_{\alpha} \left[ A_{ij}^{\alpha \beta}(x) \partial_{\beta} u^{j} \right] = \partial_{\alpha} F_{\alpha}^{i}
\quad \text{ in } \quad \Omega,
\end{equation}
for  $1 \leq \alpha, \beta \leq n$ and $1 \leq i, j \leq N$, where
\begin{equation}\label{} 
\lambda |\xi|^{2} 
\leq A_{ij}^{\alpha \beta }(x) \xi_{\alpha}^{i} \xi_{\beta}^{j} 
\qquad \text{and} \qquad
\big| A_{ij}^{\alpha \beta}(x) \big| \leq \Lambda 
\qquad
\big( x \in \Omega, \ \xi \in \br^{nN} \big),
\end{equation}
for some positive constant $\lambda$ and $\Lambda$. Because the physical characteristics are regular in each subregion $\Omega_{0}, \cdots, \Omega_{l} \subset \Omega$, we assume that $A_{ij}^{\alpha \beta}, \, F_{\alpha}^{i} \in C^{\mu}(\Omega_{k})$ $ \big( k \in \{ 0, \cdots, l \} \big)$ for any $ 1 \leq \alpha, \beta \leq n$ and $1 \leq i,j \leq N$. We remark that only the interior estimates will be obtained in this paper, and one may not impose any regularity condition on the boundary data.

The regularity theory related to composite materials is motivated by the numerical observation \cite{BIABSPLK1} that the gradient bound $|Du|$ is independent of the distance between the subdomains for certain homogeneous isotropic linear systems of elasticity. Bonnetier and Vogelius \cite{BEVM1} considered a geometric structure that two touching disk inside a disk to obtain a gradient boundedness of the weak solution. Then Li and Vogelius \cite{LYVM1} obtained global Lipschitz regularity and global piecewise gradient H\"{o}lder continuity for linear elliptic equations in general geometry, say mutually disjoint subdomains $\Omega_{1}$, $\cdots$, $\Omega_{l}$ inside the domain $\Omega$. Later, Li and Nirenberg \cite{LYNL1} extended \cite{LYVM1} by obtaining local Lipschitz regularity and local piecewise gradient H\"{o}lder continuity for linear elliptic systems. Here, gradient piecewise H\"{o}lder continuous means that $Du$ is H\"{o}lder continuous in each $\Omega_{k}$ for any $k \in \{ 0, \cdots, l \}$ and local gradient piecewise H\"{o}lder continuous means that $Du$ is H\"{o}lder continuous in each $\Omega_{k} \cap \Omega'$ for any $k \in \{ 0, \cdots, l \}$ and  $\Omega' \subset \subset \Omega$.

In \cite{LYNL1,LYVM1}, they obtained piecewise gradient H\"{o}lder continuity in the point-wise sense by using Schauder type approach. In this paper, we find a suitable function related to the gradient of the weak solution which is not only locally piecewise  H\"{o}lder continuous but locally H\"{o}lder continuous. (See Theorem \ref{Theorem_of_U}.)  Then by using that the coefficients are H\"{o}lder continuous in each component, one can also show that the gradient of the weak solution is piecewise H\"{o}lder continuous which was already obtained in \cite{LYNL1,LYVM1}. (See Corollary \ref{Corollary_of_Du}). The purpose for obtaining such a result is to derive a gradient type estimate which can be used for the open problem suggested by Li and Nirenberg in \cite{LYNL1} which is related to piecewise H\"{o}lder continuity of higher order derivatives for weak solutions to elliptic equations from composite materials. We will obtain the desired gradient H\"{o}lder type estimate by using the excess functional, say $\mint_{Q_{r}(z)} \big| g-(g)_{Q_{r}(z)} \big| \, dx$, which appears in Campanato type embeddings.

\smallskip

We introduce the notation in this paper. Let $y=(y^{1},y') \in \mathbb{R}^{n}$ be a typical point, and $r>0$ be a size. 
\begin{enumerate}
\item $Q_{r}'(y') = \left \{ x' = (x^{2}, \cdots , x^{n})  \in \mathbb{R}^{n-1} : \max_{2 \leq i \leq n} |x^{i}-y^{i}| < r \right \}$ is the open cube in $\mathbb{R}^{n-1}$ with center $y'$ and size $r$. Also we denote $Q_{r}' = Q_{r}'(0')$.
\item $Q_{r}(y)  =  \left \{ x \in \mathbb{R}^{n} : \max_{1 \leq i \leq n} |x^{i}-y^{i}| < r \right \} = (y^{1}-r,y^{1}+r) \times Q_{r}'(y')$ is the open cube in $\mathbb{R}^{n}$  with center $y$ and size $r$.  Also we denote $Q_{r} = Q_{r}(0)$.
\item For a function $g(x)$ in $\mathbb{R}^n$, 
$$
(g)_{U} = \mint_U g(x) \, dx = \frac{1}{|U|} \int_{U} g(x) \, dx,
$$
where $U$ is an open subset in $\mathbb{R}^n$ and $|U|$ is the $n$-dimensional Lebesgue measure of $U$.
\end{enumerate}

\smallskip

A typical composite material $\Omega \subset \mathbb{R}^{n}$ ($n \geq 2$) composed of $C^{1,\gamma}$-boundaries can be described as the following. Let $\Omega_{1} \subset \Omega$ be a connected component in $\Omega$ and $\Omega_{2}, \cdots, \Omega_{l} \subset \Omega$ be the components surrounded by $\Omega_{1}$. Without loss of generality, we may assume that $\Omega_{1} \cup \cdots \cup \Omega_{l}$ and $\Omega_{2} \cup \cdots \cup \Omega_{l}$ are open. Let $D_{2}, \cdots, D_{m} \subset \Omega$ be the disjoint open connected components of $\Omega_{2} \cup \cdots \cup \Omega_{l}$. Then for the open set  $D_{1}=\Omega_{1} \cup \cdots \cup \Omega_{l}$, we have that $\Omega_{1} = D_{1} \setminus (D_{2} \cup \cdots \cup D_{m})$. If $D_{1},D_{2},\cdots,D_{m}$ are $C^{1,\gamma}$-domains then we may say that the component(or the subregion) $\Omega_{1}$ is composed of $C^{1,\gamma}$-boundaries. For this geometry, one can prove that for a sufficiently small scale, there exists a coordinate system such that the boundary of the subregions become graphs, see for instance \cite{KYSP1}. For the composite geometry related to $C^{1,\gamma}$-domains, we also refer to \cite{LYNL1,LYVM1}.

With the description in the previous paragraph, we may assume that the cube $Q_{r}(z)$ can be divided into the components(of the composite material) or the subregions by using $C^{1,\gamma}$-graph functions $\left\{ \varphi_{k} : k \in K_{+} \right\}$. Here, $ K_{+} $ will be used to denote the index set of graph functions. The subregions in $Q_{r}(z)$ will be denoted as $Q_{r}^{k}(z)$ with a index set $K$. We remark that for the index of the components $K = \{ k_{-}, k_{-}+1, \cdots, k_{+} \}$, there is an one more element $\left\{ k_{+} +1 \right \}$ in the index set of the graph functions $K_{+} = K \dotcup \left\{ k_{+} +1 \right \}$. We also remark that there can be only one element in $K = \{ k_{-}, k_{-}+1, \cdots, k_{+} \}$. In Definition \ref{composite cube}, $\dotcup_{k \in K} U_{k} $ denotes disjoint union meaning that $\dotcup_{k \in K} U_{k} $ is the union of the sets $\{ U_{k} : k\in K \}$ and that $\{ U_{k} : k\in K \}$ are mutually disjoint.

\begin{defn}\label{composite cube}
We say that $\left( Q_{r}(z) , \left\{ \varphi_{k} : k \in K_{+} \right\} \right)$ is a composite cube if the graph functions $\varphi_{k} \in C^{1,\gamma} \left( Q_{r}'(z') \right)$ $(k \in K_{+})$ with $K = \left\{ k_{-}, k_{-}+1, \cdots, k_{+} \right\}$ and $K_{+} = K \dotcup \left\{ k_{+} +1 \right \}$ satisfy that
\begin{equation*}\label{}
\varphi_{k}(x')  \leq  \varphi_{k+1}(x')
\qqquad \left(  x \in Q_{r}'(z), \, k \in K \right),
\end{equation*}
and
\begin{equation*}\label{}
Q_{r}(z) = \dotcup_{k \in K} Q_{r}^{k}(z),
\end{equation*}
where $Q_{r}^{k}(y) : = \big\{ (x^{1},x') \in Q_{r}(y) : \varphi_{k}(x') < x^{1} \leq \varphi_{k+1}(x') \big \}$ $(k \in K)$.
\end{defn}

For the composite cube inside the cube, we use the following natural definition.

\begin{defn}
For the composite cube $\left( Q_{r}(z) , \left\{ \varphi_{k} : k \in K_{+} \right\} \right)$, we denote
\begin{equation*}\label{} 
Q_{\rho}^{k}(y) : = \big\{ (x^{1},x') \in Q_{\rho}(y) : \varphi_{k}(x') < x^{1} \leq \varphi_{k+1}(x') \big \}
\qquad \qquad ( k \in K ),
\end{equation*}
for any $Q_{\rho}(y) \subset Q_{r}(z)$.
\end{defn}

\begin{rem}
If $\left( Q_{r}(z) , \left\{ \varphi_{k} : k \in K_{+} \right\} \right)$ is a composite cube then for any $Q_{\rho}(y) \subset Q_{r}(z)$, $\left( Q_{\rho}(y) , \left\{ \varphi_{k} : k \in K_{+} \right\} \right)$ is also a composite cube. Moreover, we have that $\inf_{Q_{r}'(z')} \left| \varphi_{k_{-}} \right| \geq r$ and $\inf_{Q_{r}'(z')} \left| \varphi_{k_{+}+1} \right| \geq r$ for $K = \left\{ k_{-}, k_{-}+1, \cdots, k_{+}, k_{+}+1 \right\}$.
\end{rem}

To state our main theorem, we define a vector-valued function $\pi' : Q_{r}(z) \to \br^{n-1}$ which is  naturally induced from our geometry.

\begin{defn}\label{derivative of flow}
For the composite cube $\left( Q_{r}(z) , \left\{ \varphi_{k} : k \in K \right\} \right)$, define $T_{k} : Q_{r}(z) \to [0,1]$ $ (k \in K)$ as
\begin{equation}\label{GS220} 
T_{k}(x^{1},x') = \frac{x^{1} - \varphi_{k}( x')}{\varphi_{k+1}(x') - \varphi_{k}(x')} 
~ \text{ in } ~ Q_{r}^{k}(z)
\qqquad (k \in K).
\end{equation}
Then `the derivative of the naturally induced flow' $\pi : Q_{r}(z) \to \br^{n}$ is defined as
\begin{equation}\label{GS250}
\pi = (-1,\pi') = ( -1, \pi_{2}, \cdots, \pi_{n} )
\end{equation}
where
\begin{equation}\begin{aligned}\label{GS260}
\pi_{\alpha}(x) = D_{\alpha}\varphi_{k+1}( x') \cdot T_{k}(x)
+ D_{\alpha}\varphi_{k}( x')  \cdot [1-T_{k}(x)]
~ \text{ in } ~ Q_{r}^{k}(z),
\end{aligned}\end{equation}
for any $k \in K$ and  $\alpha \in \{ 2, \cdots, n \}$. 
\end{defn}

\begin{rem}
At a boundary point of subregions, say $x = \left(\varphi_{k}(x'), x' \right) \in Q_{r}(z)$ $(k \in K)$, we have that $\pi'(x) = D_{x'}\varphi_{k}(x')$. Moreover, from the point $\left(\varphi_{k}(x'), x' \right)$ to the point $\left(\varphi_{k+1}(x'), x' \right)$, the value of $\pi'$ changes linearly from $D_{x'}\varphi_{k}(x')$ to $D_{x'}\varphi_{k+1}(x')$. So the derivative of the naturally induced flow remains same for any subset $Q_{\rho}(y) \subset Q_{r}(z)$.
\end{rem}

\begin{rem}
In Definition \ref{derivative of flow}, the reason for using the phrase `the derivative of the naturally induced flow' is that for the flow $\psi : Q_{r-\epsilon}(z) \times (-\epsilon,\epsilon) \to \br^{n}$ defined as 
\begin{equation*}\label{}
\psi(x,t') 
= [ \varphi_{k+1} (x'+ t') -\varphi_{k+1}( x') ] \cdot T_{k}(x)
+ [ \varphi_{k} ( x'+ t') - \varphi_{k}( x') ] \cdot [ 1-T_{k}(x) ],
\end{equation*}
we have that $\pi'(x) = \partial_{t} \psi(x,t') \big|_{t=0}$. In \cite{KY1}, the concept of the (time) derivatives of the flow will be used to solve the open problem suggested by Li, Nirenberg and Vogelius in \cite{LYNL1,LYVM1}. But there are some technical difficulties related to the discontinuities and we need some additional argument for proving piecewise smoothness for piecewise smooth coefficients, see the introduction of \cite{KY1}.
\end{rem}

We assume that $A_{ij}^{\alpha \beta} : \br^{n} \to \br^{Nn} \times \br^{Nn} $  $\left(  1 \leq i,j \leq N, ~ 1 \leq \alpha, \beta \leq n \right)$ satisfy that
\begin{equation}\label{ell1}
\lambda |\zeta|^{2} \leq \sum_{ 1 \leq i,j \leq N} \sum_{1 \leq \alpha, \beta \leq n} A_{ij}^{\alpha \beta}(x) \zeta_{\alpha}^{i} \zeta_{\beta}^{j} 
\qquad \left( x \in \br^{n}, ~ \zeta \in \br^{Nn} \right).
\end{equation}
and
\begin{equation}\label{ell2}
\left| A_{ij}^{\alpha \beta} (x) \right|
\leq \Lambda 
\qquad \qquad
\left( x \in \br^{n}, ~ 1 \leq i,j \leq N, ~ 1 \leq \alpha, \beta \leq n \right).
\end{equation}

We now state the main results. In Theorem \ref{Theorem_of_U}, we focus on the cube $Q_{3R}$ and assume that the minimum of the absolute value of the graph functions are smaller than $4R$. If the the minimum of the absolute value of the graph functions such as $\varphi_{k_{-}}$ and $\varphi_{k_{+}+1}$ are greater than $4R$, then one can choose a new  graph functions $\varphi_{k_{-}}$ and $\varphi_{k_{+}+1}$ in the way that the regions of the composite cube  remain same. For the simplicity of the notation, we set
\begin{equation}\label{mu}
\mu = \frac{\gamma}{2(\gamma+1)} \in \left( 0, \frac{1}{4} \right].
\end{equation}
We also explain the constant $\mu = \frac{\gamma}{2(\gamma+1)}$ in the condition $A_{ij}^{\alpha \beta}, F_{\alpha}^{i} \in C^{\mu} \left( Q_{2R}^{k} \right)$. The main tool for handling the composite domain or the composite cube is the estimate on the non-crossing boundaries of the regions or the non-crossing graph functions, (see for instance \cite[Section 5]{LYVM1} or \cite[Section 4]{LYNL1}) which comes from that the boundary of the components in the composite materials does not cross each other. In the estimate, we lose some regularity (see Lemma \ref{GSS300}) and the main equation behaves like elliptic equations with $C^{\mu}$-H\"{o}lder continuous coefficients.  

\begin{thm}\label{Theorem_of_U}
For the composite cube $\left( Q_{3R} , \left\{ \varphi_{k} : k \in K_{+} \right\} \right)$, assume that
\begin{equation}\label{minimum}
\inf_{x' \in Q'_{3R}} |\varphi_{k}| < 4R, \quad  \left( k \in K_{+} \right) . 
\end{equation}
Also for $\mu$ in \eqref{mu}, assume  \eqref{ell1}, \eqref{ell2} and that 
\begin{equation*}\label{}
A_{ij}^{\alpha \beta}, F_{\alpha}^{i} \in C^{\mu} \left( Q_{2R}^{k} \right),
\end{equation*}
for any $1 \leq \alpha, \beta \leq n$, $ 1 \leq i,j \leq N$ and $k \in K$. Let $u$ be a weak solution of
\begin{equation*}\label{}
D_{\alpha} \left[ A_{ij}^{\alpha \beta} D_{\beta}u^{j} \right] = D_{\alpha} F^{i}_{\alpha} \quad \text{ in } \quad Q_{r}(z),
\end{equation*}
and define $U : Q_{2R} \to \br^{Nn}$ as $U = \left( U^{1}, \cdots, U^{N} \right)^{T}$ and
\begin{equation*}\label{} 
U^{i} = \left( \sum_{1 \leq \alpha \leq n } \pi_{\alpha} \left[ \left( \sum_{1 \leq j \leq N} \sum_{1 \leq \beta \leq n } A_{ij}^{\alpha \beta} D_{\beta} u^{j} \right) - F_{\alpha}^{i} \right], D_{x'}u^{i} + \pi' \, D_{1} u^{i} \right),
\end{equation*}
for any $1 \leq i \leq N$, where $\pi : Q_{2R} \to \br^{n}$ is defined in \ref{derivative of flow}. Then we have that
\begin{equation}\begin{aligned}\label{Theorem_of_U_Estimate1}
& \frac{1}{\rho^{2\mu}}  \mint_{Q_{\rho}(z) } \left| U  - (U)_{Q_{\rho}(z) } \right|^{2} \, dx  \\
& \quad \leq  \frac{c}{ r^{ 2\mu }}
\mint_{Q_{r}(z) } \left| U - (U)_{Q_{r}(z) } \right|^{2} \, dx \\
& \qquad +  \frac{c}{ R^{2\mu} } \left[   \mint_{Q_{r}(z) } |U|^{2} \, dx 
+   \| F \|_{L^{\infty}(Q_{r}(z))}^{2} + R^{2\mu} \sup_{ k \in K } [ F ]_{C^{\mu}(Q_{r}^{k}(z))}^{2}  \right],
\end{aligned}\end{equation}
and
\begin{equation}\label{Theorem_of_U_Estimate2} 
\mint_{Q_{\rho}(z) } \left| U  \right|^{2} \, dx
\leq c \left( \mint_{Q_{r}(z) } \left| U  \right|^{2} \, dx
+ \| F \|_{L^{\infty}(Q_{2R})}^{2} 
+ r^{2\mu} \sup_{ k \in K } [ F ]_{C^{\mu}(Q_{r}^{k}(z))}^{2}     \right),
\end{equation}
for any $z \in Q_{R}$ and $0 < \rho \leq r \leq R$.  
Here, the constant $c$ depends on the terms $n$, $N$, $\lambda$, $\Lambda$, $\sup_{k \in K_{+}} \| D_{x'} \varphi_{k} \|_{L^{\infty}(Q_{3R}')}$, $R^{\gamma} \sup_{k \in K_{+}} [ D_{x'}\varphi_{k} ]_{C^{\gamma}(Q_{3R}')}$, $ R^{\mu} \sup_{k \in K} \left[ A_{ij}^{\alpha \beta} \right]_{C^{\mu} \left( Q_{2R}^{k} \right)}$ and the number of elements in the set $K$. 
\end{thm}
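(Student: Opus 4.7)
The strategy is a classical Campanato-type freeze-and-compare argument adapted to the composite geometry, as hinted at in the introduction. The conceptual reason $U$ is the right object to track is that its entries are exactly the quantities that remain continuous across each interface $\{x^{1}=\varphi_{k}(x')\}$: the scalar entry $\sum_{\alpha}\pi_{\alpha}\big[\sum_{j,\beta}A_{ij}^{\alpha\beta}D_{\beta}u^{j}-F_{\alpha}^{i}\big]$ is (a scalar multiple of) the co-normal flux, continuous by the weak formulation of the equation since $\pi=(-1,D_{x'}\varphi_{k})$ on the $k$-th interface, while $D_{x'}u^{i}+\pi'D_{1}u^{i}$ is the tangential derivative of $u^{i}$ along the interface, continuous because the trace of $u$ is. Thus, unlike $Du$, one may hope that $U$ obeys a global H\"older-type decay on $Q_{2R}$ despite the discontinuities in the coefficients.

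\textbf{Step 1 (Freeze).} Fix $z\in Q_{R}$ and $Q_{r}(z)\subset Q_{2R}$. Replace each graph $\varphi_{k}$ by its affine tangent $\tilde\varphi_{k}(x'):=\varphi_{k}(z')+D_{x'}\varphi_{k}(z')\cdot(x'-z')$ and freeze each $A_{ij}^{\alpha\beta}$ to its value at a reference point of the corresponding frozen slab $\tilde Q_{r}^{k}(z)$. Let $v$ solve the resulting homogeneous constant-coefficient transmission system on $Q_{r}(z)$ with flat interfaces $\{x^{1}=\tilde\varphi_{k}(x')\}$ and boundary data $v=u$ on $\partial Q_{r}(z)$, and write $U_{v}$ for the analogue of $U$ built from $v$ on the frozen geometry. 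After an affine change of coordinates straightening the slabs, $v$ solves a constant-coefficient transmission problem across parallel coordinate hyperplanes; tangential differentiation together with the classical interior regularity for such flat transmission problems yields the Lipschitz decay
\begin{equation*}
\mint_{Q_{\rho}(z)}|U_{v}-(U_{v})_{Q_{\rho}(z)}|^{2}\,dx \,\leq\, c\left(\frac{\rho}{r}\right)^{2}\mint_{Q_{r}(z)}|U_{v}|^{2}\,dx
\end{equation*}
for all $\rho\leq r/2$ (the case $\rho>r/2$ being trivial).

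\textbf{Step 2 (Compare).} Set $w:=u-v$. Its equation has a right-hand side that splits into: (a) an $A$-oscillation piece $(A-\tilde A)D_{\beta}v$, of size $O(r^{\mu})$ since $A\in C^{\mu}(Q_{r}^{k}(z))$; (b) an $F$-piece producing the $\|F\|_{\infty}$ and $r^{\mu}\sup_{k}[F]_{C^{\mu}}$ contributions of \eqref{Theorem_of_U_Estimate1}; and (c) a purely geometric piece supported on the symmetric differences $Q_{r}^{k}(z)\triangle\tilde Q_{r}^{k}(z)$, which have width $O(r^{1+\gamma})$. The last piece is controlled by the non-crossing graph estimate of \cite{LYVM1,LYNL1} (see Lemma \ref{GSS300} of the present paper), and this is precisely what pins the exponent at $\mu=\gamma/(2(\gamma+1))$ rather than $\gamma$. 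A Caccioppoli-type energy estimate then bounds $\|Dw\|_{L^{2}(Q_{r}(z))}^{2}$ by $r^{2\mu}$ times the bracket on the right of \eqref{Theorem_of_U_Estimate1}, and a short computation shows that $|U-U_{v}|^{2}$ is pointwise controlled by $|Dw|^{2}$ plus geometric slack of the same order.

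\textbf{Step 3 (Iterate and conclude).} Inserting the decay from Step 1 and the error from Step 2 into the standard Campanato-Giaquinta iteration lemma yields \eqref{Theorem_of_U_Estimate1} after dyadic iteration in $\rho/r$, while \eqref{Theorem_of_U_Estimate2} follows from \eqref{Theorem_of_U_Estimate1} by dyadic summation combined with a Caccioppoli-type argument. The main obstacle is Step 2: one has to simultaneously freeze every $\varphi_{k}$ to its tangent hyperplane while preserving the non-crossing condition of the graphs, and then compare $u$ with $v$ across domains that genuinely differ. The sharp balance between the two competing error sources, coefficient oscillation of size $O(r^{\mu})$ and interface mismatch of size $O(r^{1+\gamma})$ on symmetric differences, is exactly what forces the H\"older exponent $\mu=\gamma/(2(\gamma+1))$ recorded in \eqref{mu}.
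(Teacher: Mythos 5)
Your overall architecture (freeze the geometry and the coefficients, compare with a reference problem, run a Campanato iteration) matches the paper's, and you correctly identify why $U$ is the right quantity and why the non-crossing lemma forces the exponent $\mu=\gamma/(2(\gamma+1))$. But Step 1 contains a genuine gap. You freeze each $\varphi_{k}$ to its own tangent hyperplane and then assert that "an affine change of coordinates straightening the slabs" reduces the problem to a constant-coefficient transmission problem across \emph{parallel} coordinate hyperplanes. A single affine map cannot make several hyperplanes with different normals $(1,-D_{x'}\varphi_{k}(z'))$ simultaneously parallel; and for a constant-coefficient system with several non-parallel flat interfaces there is no "classical interior regularity" to invoke (such interfaces can even intersect inside the cube, destroying the ordering). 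The paper resolves exactly this point differently: it applies one linear shear $\Psi(x)=(x^{1}-z^{1}-\pi'(z)\cdot(x'-z'),x'-z')$ built from the single vector $\pi'(z)$, and then uses the non-crossing estimate (Lemma \ref{GSS300} / Lemma \ref{PCNS300}) to conclude that \emph{all} interfaces present in $Q_{\theta}$ have slope $O((\theta/R)^{2\mu})$ after the shear; the reference problem is then a genuine laminate with coefficients depending only on $x^{1}$ (Section \ref{GELE}, based on Dong and Dong--Kim), for which the decay of $W$ is proved by tangential differentiation. Your per-interface tangent freezing skips the step that makes the reference problem solvable.

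A second gap is in Step 2. On the symmetric differences $Q_{\theta}^{k}\triangle\{z_{k}^{1}<x^{1}\le z_{k+1}^{1}\}$ the coefficients of the two problems differ by $O(1)$ (they belong to different pieces), not by $O(r^{\mu})$; only the \emph{measure} of this set is small. Consequently the error term $\int|A-\bar A|^{2}|Du|^{2}$ cannot be closed by a Caccioppoli estimate alone — one needs the reverse H\"older/Gehring higher integrability $Du\in L^{2+\sigma}$ (estimate \eqref{PCC280}) and H\"older's inequality with exponent $(2+\sigma)/\sigma$, which is why the paper's comparison (Lemma \ref{PCCS500}) produces the exponent $2\mu\sigma/(2+\sigma)$ and why the iteration in Lemma \ref{PCCS900} is more delicate than a standard dyadic summation. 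Your proposal omits this ingredient. Finally, a minor point: the decay available for the reference laminate is $(\rho/r)^{1}$ (from the $C^{1/2}$-type estimate of Lemma \ref{GELES5000}), not the $(\rho/r)^{2}$ you claim, though $(\rho/r)^{1}$ suffices since $2\mu\le 1/2<1$.
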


We will later prove that $\pi' \in C^{2\mu}(Q_{2R})$. We will compare $Du$ and $U$ later in Lemma \ref{HODS600} and Lemma \ref{HODS700}. So by using Theorem \ref{Theorem_of_U}, one can obtain Lipschitz regularity and piecewise gradient H\"{o}lder estimates which was already obtained in \cite{LYNL1,LYVM1}.

\begin{cor}\label{Corollary_of_Du}
For the composite cube $\left( Q_{3R} , \left\{ \varphi_{k} : k \in K_{+} \right\} \right)$, assume that
\begin{equation*}\label{}
\inf_{x' \in Q'_{3R}} |\varphi_{k}| < 4R, \quad   \left( k \in K_{+} \right) .
\end{equation*}
Also for $\mu$ in \eqref{mu}, assume  \eqref{ell1}, \eqref{ell2} and that 
\begin{equation*}\label{}
A_{ij}^{\alpha \beta}, F_{\alpha}^{i} \in C^{\mu} \left( Q_{2R}^{k} \right),
\end{equation*}
for any $1 \leq \alpha, \beta \leq n$, $ 1 \leq i,j \leq N$ and $k \in K$. Let $u$ be a weak solution of
\begin{equation*}\label{}
D_{\alpha} \left[ A_{ij}^{\alpha \beta} D_{\beta}u^{j} \right] = D_{\alpha} F^{i}_{\alpha} \quad \text{ in } \quad Q_{2R}.
\end{equation*}
Then we have that $Du \in L^{\infty}(Q_{R})$ and $Du \in C^{\gamma} \left( Q_{R}^{l} \right)$ for any $l \in K$ with the estimates
\begin{equation*}\label{} 
\| Du \|_{L^{\infty}(Q_{R})}^{2}
\leq c \left( \mint_{ Q_{2R} } \left| Du  \right|^{2} \, dx
+\| F \|_{L^{\infty}(Q_{2R}^{k})}^{2} 
+ R^{2\mu} \sup_{ k \in K } [ F ]_{C^{\mu}(Q_{2R}^{k})}^{2}     \right).
\end{equation*}
and 
\begin{equation*}\label{}
[Du]_{ C^{\mu} \left(Q_{R}^{l} \right) }^{2} 
 \leq  \frac{c}{R^{2\mu}} \left( \mint_{Q_{2R} } |Du|^{2} \, dx 
+ \| F \|_{L^{\infty}(Q_{2R})}^{2} + R^{2\mu} \sup_{ k \in K } [ F ]_{C^{\gamma}(Q_{2R}^{k}    )}^{2}  \right),
\end{equation*}
for any $l \in K$. Here, the constant $c$ depends on $n$, $N$, $\lambda$, $\Lambda$, $\sup_{k \in K_{+}} \| D_{x'} \varphi_{k} \|_{L^{\infty}(Q_{3R}')}$, $R^{\gamma} \sup_{k \in K_{+}} [ D_{x'}\varphi_{k} ]_{C^{\gamma}(Q_{3R}')}$, $ R^{\mu} \sup_{k \in K} \left[ A_{ij}^{\alpha \beta} \right]_{C^{\mu} \left( Q_{2R}^{k} \right)}$ and the number of elements in the set $K$. 
\end{cor}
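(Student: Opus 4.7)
The plan is to extract the corollary from Theorem~\ref{Theorem_of_U} in two stages: first upgrade the Campanato-type estimates \eqref{Theorem_of_U_Estimate1} and \eqref{Theorem_of_U_Estimate2} to pointwise $C^\mu$ and $L^\infty$ information about $U$, then invert the algebraic relation between $U$ and $Du$ on each subregion $Q_R^l$ to transfer the regularity. Fixing $z \in Q_R$ and specialising \eqref{Theorem_of_U_Estimate1} to $r=R$, the right-hand side no longer depends on $\rho$, so the quantity $\rho^{-2\mu} \mint_{Q_\rho(z)} |U-(U)_{Q_\rho(z)}|^{2}\,dx$ is bounded uniformly in $\rho \in (0,R]$ by $cR^{-2\mu}$ times the $R$-scale RHS. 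This is exactly the Campanato $\mathcal{L}^{2,n+2\mu}$ condition, so $U$ has a continuous representative in $C^\mu(Q_R)$ with a quantitative seminorm bound. Likewise, \eqref{Theorem_of_U_Estimate2} with $r=R$ combined with Lebesgue differentiation at continuity points of $U$ yields the pointwise $L^\infty(Q_R)$ bound.

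To invert $Du \mapsto U$ on $Q_R^l$, the tangential components of $U$ give $D_\alpha u^i = U^{i,\alpha}_{(2)} - \pi'_\alpha D_1 u^i$ for $\alpha \geq 2$. Substituting into $U^i_{(1)} = \pi_\alpha[A^{\alpha\beta}_{ij}D_\beta u^j - F^i_\alpha]$ and using $\pi_1=-1$ to absorb the $\beta=1$ term, the coefficient of $D_1 u^j$ collapses to $-M_{ij}$ with $M_{ij}:=\pi_\alpha A^{\alpha\beta}_{ij}\pi_\beta$. Applying the ellipticity \eqref{ell1} to the rank-one tensor $\zeta^i_\alpha = \pi_\alpha v^i$, together with $|\pi|^2 \geq 1$, gives $M_{ij}v^i v^j \geq \lambda |v|^2$, so $M$ is uniformly invertible with $\|M^{-1}\|$ controlled by $\lambda$, $\Lambda$ and $\|\pi'\|_{L^\infty}$. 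Solving for $D_1 u^j$ and back-substituting recovers the full gradient $Du$ as a linear function of $U$ and $F$ whose coefficients are algebraic in $A$ and $\pi'$; this inversion is precisely what Lemmas~\ref{HODS600} and~\ref{HODS700} are advertised to deliver, so I would invoke them directly.

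Combining the two steps, the uniform bound on $M^{-1}$ converts the $L^\infty$ estimate on $U$ into the claimed bound on $\|Du\|_{L^\infty(Q_R)}$; the replacement of the $\mint |U|^2$ term on the right-hand side by $\mint_{Q_{2R}}|Du|^2$ is absorbed by the same lemmas. For the piecewise H\"older seminorm, the coefficients of the inverse map lie in $C^\mu(Q_R^l)$: $A^{\alpha\beta}_{ij}$ and $F^i_\alpha$ by hypothesis, and $\pi' \in C^{2\mu}(Q_{2R}) \subset C^\mu(Q_R^l)$ as announced just before the corollary. The product rule combined with the global $C^\mu(Q_R)$-regularity of $U$ from Step~1 then yields the stated bound on $[Du]_{C^\mu(Q_R^l)}^{2}$. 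The main obstacle is the bookkeeping of constants: one has to verify that the H\"older seminorm of $M^{-1}$ on each $Q_R^l$, and hence of the full inversion, depends only on $\lambda$, $\Lambda$, $\sup_k \|D_{x'}\varphi_k\|_{L^\infty}$, $R^\gamma \sup_k [D_{x'}\varphi_k]_{C^\gamma}$, $R^\mu \sup_k [A^{\alpha\beta}_{ij}]_{C^\mu(Q_{2R}^k)}$ and the cardinality of $K$, matching the dependency list in the statement.
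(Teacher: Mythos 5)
Your proposal is correct and follows essentially the same route as the paper: the paper likewise applies the Campanato embedding (its Proposition \ref{PMTS300}) to the two estimates of Theorem \ref{Theorem_of_U} to get $\|U\|_{L^{\infty}(Q_R)}$ and $[U]_{C^{\mu}(Q_R)}$, then invokes Lemmas \ref{HODS600} and \ref{HODS700} with $\zeta = Du$ together with $[\pi']_{C^{\mu}(Q_{2R})} \le cR^{-\mu}$ (Lemma \ref{GSS600}) and the $C^{\mu}$ bound on $A_{ij}^{\alpha\beta}$ in each $Q_{2R}^l$ to transfer the bounds to $Du$, closing with $|U|\le c(|Du|+|F|)$. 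Your explicit positivity argument for $M_{ij}=\pi_\alpha A^{\alpha\beta}_{ij}\pi_\beta$ is exactly the mechanism inside those two lemmas, so no genuinely new ingredient is introduced.
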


We refer to Calder\'{o}n-Zygmund type estimate for linear equations \cite{JYKY1,JYKY3,HFLDWL1,UK1} and $p$-Laplace type equations \cite{JYKY2,ZC1}. Also there is an another direction about for elliptic equation from composite materials which is the blow up phenomenon for such as two almost touching fibres have the extreme ($0$ or $\infty$) conductivities, see \cite{AHKHLM1,KHLMYK1}.

\smallskip

For the sake of the convenience, unless specified, we employ the letter $c \geq 1$ throughout this paper to denote any constants that can be explicitly computed in terms of the constants $n$, $N$, $\lambda$, $\Lambda$, $\sup_{k \in K_{+}} \| D_{x'} \varphi_{k} \|_{L^{\infty}(Q_{3R}')}$, $R^{\gamma} \sup_{k \in K_{+}} [ D_{x'}\varphi_{k} ]_{C^{\gamma}(Q_{3R}')}$, $ R^{\mu} \sup_{k \in K} \left[ A_{ij}^{\alpha \beta} \right]_{C^{\mu} \left( Q_{2R}^{k} \right)}$ $(1 \leq \alpha, \beta \leq n, \ 1 \leq i,j \leq n)$ and $|K|$ the number of elements in the set $K$. Thus the exact value denoted by $c$ may change from line to line in a given computation.

\section{Estimates on the derivative of the naturally induced flow}

In this section we will prove that the derivative of the naturally induced flow $\pi$ in \eqref{GS250} is locally H\"{o}lder continuous in the cube.

\subsection{Decay estimate for the graph functions}

To handle two non-crossing graph functions in $\left\{ \varphi_{k} : k \in K_{+} \right\} $, we  use following result, which naturally holds from our geometric settings (also see \cite[Section 5]{LYVM1} or \cite[Section 4]{LYNL1}).

\begin{lem}\label{GSS300}
Suppose that $\varphi_{k}, \varphi_{l} : C^{1,\gamma}
(Q_{r+\rho}') \to \mathbb{R}$ satisfy that 
\begin{equation*}% \label{} 
[ D_{x'}\varphi_{k} ]_{C^{\gamma}(Q_{r+\rho}')}, [ D_{x'}\varphi_{l} ]_{C^{\gamma}(Q_{r+\rho}')} \leq c_{1},
\end{equation*}
\begin{equation*}% \label{} 
\| \varphi_{k} \|_{L^{\infty}(Q_{r+\rho}')}, \| \varphi_{l} \|_{L^{\infty}(Q_{r+\rho}')} \leq c_{2}, 
\end{equation*}
and
\begin{equation*}% \label{} 
\varphi_{k} \leq \varphi_{l}
\text{ in } Q_{r+\rho}'.
\end{equation*}
Then we have that
\begin{equation}\label{GS330}
|D_{x'}\varphi_{l}(x') - D_{x'}\varphi_{k}(x')|
\leq 3\rho^{-1} \left( \rho^{1+\gamma} c_{1} + 2c_{2} \right)^{\frac{1}{\gamma+1}} [\varphi_{l}(x') - \varphi_{k}(x')]^{\frac{\gamma}{\gamma+1}} 
\end{equation}
for any $x' \in Q_{r}'$.
\end{lem}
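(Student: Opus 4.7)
The plan is to reduce the inequality to a one-variable optimization carried out along the direction of the gradient difference, using only $\varphi_k \leq \varphi_l$, the H\"older bound on $D_{x'}(\varphi_l - \varphi_k)$, and the $L^\infty$ bound on $\varphi_k, \varphi_l$. Setting $w := \varphi_l - \varphi_k$, the hypotheses yield $w \geq 0$ on $Q_{r+\rho}'$, $\|w\|_{L^\infty(Q_{r+\rho}')} \leq 2c_2$, and $[D_{x'}w]_{C^\gamma(Q_{r+\rho}')} \leq 2c_1$. Fix $x' \in Q_r'$; the case $V := |D_{x'}w(x')| = 0$ is trivial, so I assume $V > 0$ and set $\vec{e} := D_{x'}w(x')/V$. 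Since each component of $\vec{e}$ lies in $[-1,1]$, the segment $\{x' - t\vec{e} : t \in [0, \rho]\}$ remains inside $Q_{r+\rho}'$.

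The key one-dimensional estimate then follows from integrating the directional derivative of $w$ along $-\vec{e}$. By the H\"older bound on $D_{x'}w$,
\begin{equation*}
D_{x'}w(x' - t\vec{e}) \cdot \vec{e} \geq V - 2c_1 t^\gamma \qquad (t \in [0, \rho]),
\end{equation*}
so integrating on $[0, s]$ for any $s \in (0, \rho]$ and using $w(x' - s\vec{e}) \geq 0$ yields
\begin{equation*}
w(x') \geq sV - \frac{2c_1 s^{\gamma + 1}}{\gamma + 1}, \qquad \text{equivalently} \qquad V \leq \frac{w(x')}{s} + \frac{2c_1 s^\gamma}{\gamma + 1}.
\end{equation*}

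The conclusion follows from minimizing this right-hand side over $s \in (0, \rho]$ in two regimes. When the unconstrained minimizer $s^\ast$, which is of order $(w(x')/c_1)^{1/(\gamma + 1)}$, satisfies $s^\ast \leq \rho$, substitution produces $V \leq C c_1^{1/(\gamma + 1)} w(x')^{\gamma/(\gamma + 1)}$. Otherwise $s^\ast > \rho$, which via the formula for $s^\ast$ means $w(x')$ exceeds a constant multiple of $c_1 \rho^{\gamma + 1}$; choosing $s = \rho$ then makes both terms on the right comparable to $w(x')/\rho$, yielding $V \leq C w(x')/\rho$, and factoring $w(x') = w(x')^{\gamma/(\gamma + 1)} \cdot w(x')^{1/(\gamma + 1)}$ with $w(x') \leq 2c_2$ on the second factor converts this into $V \leq C \rho^{-1} (2c_2)^{1/(\gamma + 1)} w(x')^{\gamma/(\gamma + 1)}$. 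The two regime bounds merge into the single expression of the statement via $\max(a, b)^{1/(\gamma + 1)} \leq (a + b)^{1/(\gamma + 1)}$ with $a = \rho^{1+\gamma} c_1$ and $b = 2c_2$. The only delicate point I anticipate is trimming the universal constant down to the factor $3$ in the statement; this is purely arithmetic bookkeeping of the $(\gamma + 1)/\gamma$ factors from the optimization and requires no further geometric input.
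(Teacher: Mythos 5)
Your proposal is correct and is essentially the paper's argument: the paper also moves from $x'$ in the direction $-D_{x'}(\varphi_l-\varphi_k)(x')/|D_{x'}(\varphi_l-\varphi_k)(x')|$, uses the nonnegativity of $w=\varphi_l-\varphi_k$ at the displaced point together with the $C^{\gamma}$ bound on $D_{x'}w$ (phrased as a Taylor expansion rather than your integrated directional derivative), and your inequality $V\leq w(x')/s+2c_1 s^{\gamma}/(\gamma+1)$ for all $s\in(0,\rho]$ contains the paper's estimate as the special case $s=\rho\bigl(w(x')/(\rho^{1+\gamma}c_1+2c_2)\bigr)^{1/(\gamma+1)}$. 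One small caution on the ``bookkeeping'': substituting the unconstrained minimizer $s^{\ast}$ produces a factor $(\gamma+1)/\gamma$ that blows up as $\gamma\to 0$, so to obtain the uniform constant $3$ you should evaluate your bound at the paper's choice of $s$ (which lies in $(0,\rho]$ since $w(x')\leq 2c_2$) rather than at $s^{\ast}$.
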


\begin{proof}
Fix $x '\in Q_{r}'$. Choose $y' \in Q_{r+\rho}'$ with
\begin{equation}\label{GS340}
y' = x' -  \frac{ D_{x'}\varphi_{l}(x') - D_{x'}\varphi_{k}(x') }{ |D_{x'}\varphi_{l}(x') - D_{x'}\varphi_{k}(x')| } 
\left( \frac{ [ \varphi_{l}(x') - \varphi_{k}(x')]}{ \rho^{1+\gamma} c_{1} + 2c_{2} } \right)^{\frac{1}{\gamma+1}} \rho .
\end{equation}
Then by Taylor expansion of $\varphi_{l}-\varphi_{k}$ with respect to $x'$,
\begin{equation*}\begin{aligned}\label{}
& [\varphi_{l}(x') - \varphi_{k}(x')] - [\varphi_{l}(y') - \varphi_{k}(y')] \\
& \quad \geq [D_{x'}\varphi_{l}(x') - D_{x'}\varphi_{k}(x')] \cdot (x'-y')  \\
& \qquad - \left([D_{x'}\varphi_{l}]_{C^{\gamma}(Q_{r+\rho}')} + [D_{x'}\varphi_{k}]_{C^{\gamma}(Q_{r+\rho}')} \right) |x'-y'|^{1+\gamma}.
\end{aligned}\end{equation*}
Since $\varphi_{l}(y') \geq \varphi_{k}(y')$, we find from \eqref{GS340} that
\begin{equation*}\begin{aligned}
&  \varphi_{l}(x') - \varphi_{k}(x') \\
& \quad \geq \rho  |D_{x'}\varphi_{l}(x') - D_{x'}\varphi_{k}(x')|
 \left( \frac{ [ \varphi_{l}(x') - \varphi_{k}(x')]}{ \rho^{1+\gamma} c_{1} + 2c_{2} } \right)^{\frac{1}{\gamma+1}} \\
& \qquad -  \frac{ \rho^{\gamma+1} \left([D_{x'}\varphi_{l}]_{C^{\gamma}(Q_{r+\rho}')} + [D_{x'}\varphi_{k}]_{C^{\gamma}(Q_{r+\rho}')} \right) [\varphi_{l}(x') - \varphi_{k}(x')] }{ \rho^{\gamma+1} c_{1} + 2c_{2} }.
\end{aligned}\end{equation*}
So with that $[ D_{x'}\varphi_{k} ]_{C^{\gamma}(Q_{r+\rho}')}, [ D_{x'}\varphi_{l} ]_{C^{\gamma}(Q_{r+\rho}')} \leq c_{1}$, %and that $\| \varphi_{k} \|_{L^{\infty}(Q_{r+\rho}')}, \| \varphi_{l} \|_{L^{\infty}(Q_{r+\rho}')} \leq c_{2}$
 we absorb the last term in the right-hand side to the left-hand side to find that
\begin{equation*}\label{}
3 [\varphi_{l}(x') - \varphi_{k}(x')]
\geq \rho |D_{x'}\varphi_{0}(x') - D_{x'}\varphi_{1}(x')|
 \left( \frac{ [ \varphi_{l}(x') - \varphi_{k}(x')]}{ \rho^{1+\gamma} c_{1} + 2c_{2} } \right)^{\frac{1}{\gamma+1}},
\end{equation*}
and so the lemma holds.
\end{proof}

\begin{rem}\label{kappa_remark}
With the assumption in the main theorems, the estimate \eqref{GS330} in Lemma \ref{GSS300} has scaling invariance. By taking $r = 2R$ and $\rho=R$, we use the condition \eqref{minimum} to find that $c_{2} = \sup_{k \in K_{+}} \| \varphi_{k} \|_{L^{\infty}(Q_{3R}')} \leq 4R + 2nR  \sup_{k \in K_{+}} \| D_{x'}\varphi_{k} \|_{L^{\infty}(Q_{3R}')}$. Thus with $c_{1}= \sup_{k \in K_{+}} [D_{x'}\varphi_{k}]_{C^{\gamma}(Q_{3R}')}$,
\begin{equation*}\begin{aligned}\label{}
& \left| D_{x'}\varphi_{l}(x') - D_{x'}\varphi_{k}(x') \right| \\
& \ \leq 6 \left[ R^{\gamma}  \sup_{k \in K_{+}} [D_{x'}\varphi_{k}]_{C^{\gamma}(Q_{3R}')} + 1 + n  \sup_{k \in K_{+}} \| D_{x'}\varphi_{k} \|_{L^{\infty}(Q_{3R}')} \right]^{\frac{1}{\gamma+1}} \left[ \frac{ \varphi_{l}(x') - \varphi_{k}(x') }{R} \right]^{\frac{\gamma}{\gamma+1}} 
\end{aligned}\end{equation*}
for any $x' \in Q_{2R}'$ and $k,l \in K_{+}$.
\end{rem}

In the main theorem, we obtain the estimate with respect to the cube $Q_{3R}$. But for proving the main theorem, we localize the problem and derive the estimates with respect to the cube $Q_{r}(z) \subset Q_{2R}$. So from now on, we will assume that 
\begin{equation*}\label{}
|D_{x'}\varphi_{l}(x') - D_{x'}\varphi_{k}(x')|
\leq \kappa R^{-2\mu} |\varphi_{l}(x') - \varphi_{k}(x')|^{2\mu} 
\qquad  \left( x' \in Q_{r}'(z), \ l,k \in K_{+}  \right),
\end{equation*}
for some constant $\kappa>0$, where this constant will be chosen as
\begin{equation*}\begin{aligned}
\kappa &=  18n \left( 1 + R^{\gamma}  \sup_{k \in K_{+}} [D_{x'}\varphi_{k}]_{C^{\gamma}(Q_{3R}')} +   \sup_{k \in K_{+}} \| D_{x'}\varphi_{k} \|_{L^{\infty}(Q_{3R}')} \right) \\ &
\qquad  + R^{2\mu} \sup_{k \in K} \left[ A_{ij}^{\alpha \beta} \right]^2_{C^{\mu} \left( Q_{2R}^{k} \right)},
\end{aligned}\end{equation*}
in the proof of the main theorem. Here, we also remark that 
\begin{equation*}
R^{2\mu} \left[ D_{x'}\varphi_{k} \right]_{C^{2\mu}(Q_{3R}')} \leq  (3nR)^{\gamma} [D_{x'}\varphi_{k}]_{C^{\gamma}(Q_{3R}')}
\leq \kappa
\qquad \left( k \in K_{+} \right).
\end{equation*}

\subsection{Estimate on \texorpdfstring{$(\pi_{2}, \cdots, \pi_{n})$}{pi}} 

With Lemma \ref{GSS300}, we obtain H\"{o}lder estimates related to $\pi_{i}$ in Lemma \ref{GSS600}. The results in this section is obtained with respect to the origin, but the origin can be changed to an arbitrary point in $\br^{n}$ by using translation.

\sskip

For the composite cube $\left( Q_{2r}, \left\{ \varphi_{k} : k \in K_{+} \right\} \right)$, let $\pi$ be  the derivative of the naturally induced flow $\pi : Q_{2r} \to \br^{n}$ in Definition \ref{derivative of flow}. Then
\begin{equation}\label{GS410}
\pi = (-1,\pi') = ( -1, \pi_{2}, \cdots, \pi_{n} )
\end{equation}
where
\begin{equation}\begin{aligned}\label{GS420}
\pi_{\alpha}(x) = D_{\alpha}\varphi_{k+1}( x') \cdot T_{k}(x)
+ D_{\alpha}\varphi_{k}( x')  \cdot [1-T_{k}(x)]
\quad \text{ in } \quad Q_{2r}^{k},
\end{aligned}\end{equation}
for any $k \in K$ and  $\alpha \in \{ 2, \cdots, n \}$. In view of Lemma \ref{GSS300}, we assume that
\begin{equation}\label{GS400}
|D_{x'}\varphi_{l}(x') - D_{x'}\varphi_{k}(x')|
\leq \kappa \left( \frac{ |\varphi_{l}(x') - \varphi_{k}(x')| }{R} \right)^{2\mu} 
\quad  \left( x' \in Q_{2r}, ~ k,l \in K_{+} \right),
\end{equation}
and
\begin{equation}\label{GS405}
|D_{x'}\varphi_{k}(x') - D_{x'}\varphi_{k}(y')|
\leq \kappa  \left( \frac{ |x'-y'| }{R} \right)^{2\mu}
\qqquad  \left( x',y' \in Q_{2r}', ~ k \in K_{+} \right),
\end{equation}
for some constant $\kappa>0$ and $R>0$. Also recall from \eqref{mu} that $2\mu = \frac{\gamma}{\gamma+1}$.

\sskip

For this subsection, we employ the letter $c \geq 1$ to denote any constants that can be explicitly computed in terms such as $n$,  $\kappa$, $\sup_{k \in K_{+}} \| D_{x'} \varphi_{k} \|_{L^{\infty}(Q_{2r}')}$  and the number of elements in the set $K$.

\sskip

We first handle the case when the two points belong to a same region.

\begin{lem}\label{GSS400}
Under the assumption \eqref{GS400} and \eqref{GS405}, we have that
\begin{equation*}
\big| \pi'(y) - \pi'(z) \big| \leq c \kappa \left( \frac{ |y-z| }{R} \right)^{ 2\mu }
\qquad \left( y,z \in \overline{ Q_{2r}^{k} } \cap Q_{2r} \right),
\end{equation*}
for any $k \in K$.
\end{lem}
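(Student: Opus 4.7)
The plan is to decompose $\pi'(y)-\pi'(z)$ by a product-rule expansion and then reduce to the two available Hölder-type estimates \eqref{GS400} and \eqref{GS405}. Fix $\alpha\in\{2,\ldots,n\}$ and set $h(x'):=\varphi_{k+1}(x')-\varphi_{k}(x')\geq 0$ and $\Delta(x'):=D_{\alpha}\varphi_{k+1}(x')-D_{\alpha}\varphi_{k}(x')$. In $Q_{2r}^{k}$ I rewrite \eqref{GS420} as $\pi_{\alpha}(x)=D_{\alpha}\varphi_{k}(x')+T_{k}(x)\Delta(x')$, whence
\begin{equation*}
\pi_{\alpha}(y)-\pi_{\alpha}(z)=[D_{\alpha}\varphi_{k}(y')-D_{\alpha}\varphi_{k}(z')]+T_{k}(y)[\Delta(y')-\Delta(z')]+[T_{k}(y)-T_{k}(z)]\,\Delta(z').
\end{equation*}
The first term is controlled by $\kappa(|y-z|/R)^{2\mu}$ directly from \eqref{GS405}; the second is likewise bounded by $2\kappa(|y-z|/R)^{2\mu}$ by applying \eqref{GS405} separately to $\varphi_{k}$ and $\varphi_{k+1}$ and using $T_{k}\in[0,1]$.

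The content of the lemma lies in the third term. From \eqref{GS400}, $|\Delta(z')|\leq\kappa(h(z')/R)^{2\mu}$. On the other hand, writing $T_{k}(y)-T_{k}(z)$ over the common denominator $h(y')h(z')$ and using that $\varphi_{k}$ and $\varphi_{k+1}$ are Lipschitz with constants controlled by $\sup_{k\in K_{+}}\|D_{x'}\varphi_{k}\|_{L^{\infty}(Q_{2r}')}$, a direct computation (where the term $T_{k}(y)\cdot h(y')$ absorbs the factor of $h(y')$ in the numerator) gives
\begin{equation*}
|T_{k}(y)-T_{k}(z)|\leq\frac{c\,|y-z|}{h(z')}\qquad\text{whenever }h(z')>0.
\end{equation*}
I then split into two cases. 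If $h(z')\leq|y-z|$, I use only the trivial bound $|T_{k}(y)-T_{k}(z)|\leq 1$, and the degeneracy of $\Delta$ gives $|T_{k}(y)-T_{k}(z)||\Delta(z')|\leq\kappa(h(z')/R)^{2\mu}\leq\kappa(|y-z|/R)^{2\mu}$. If instead $h(z')>|y-z|$, the Lipschitz estimate yields
\begin{equation*}
|T_{k}(y)-T_{k}(z)||\Delta(z')|\leq c\kappa\,|y-z|\,h(z')^{2\mu-1}R^{-2\mu}\leq c\kappa(|y-z|/R)^{2\mu},
\end{equation*}
since $2\mu-1<0$ and $h(z')^{2\mu-1}\leq|y-z|^{2\mu-1}$ in that regime.

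The main obstacle, unsurprisingly, is this third term: the naive Lipschitz constant of $T_{k}$ blows up at rate $h(z')^{-1}$ where adjacent graphs $\varphi_{k},\varphi_{k+1}$ almost touch, while $\Delta$ degenerates only at rate $h(z')^{2\mu}$. The two behaviours match exactly because $2\mu\leq 1/2<1$, which is the point of choosing the exponent $\mu=\gamma/[2(\gamma+1)]$ in \eqref{mu}. Finally, for boundary points of $\overline{Q_{2r}^{k}}\cap Q_{2r}$ at which $h=0$, where $T_{k}$ is not a priori defined, I extend $\pi_{\alpha}$ by continuity via $\pi_{\alpha}(x)\to D_{\alpha}\varphi_{k}(x')$ as $h(x')\to 0$ (again justified by \eqref{GS400}), so the estimate extends to the closure by density.
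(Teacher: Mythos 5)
Your proof is correct and follows essentially the same route as the paper's: your three-term decomposition is, after regrouping, identical to the one the paper obtains via the auxiliary point $w$ (note that $T_{k}(z)-T_{k}(y)=\frac{z^{1}-w}{\varphi_{k+1}(z')-\varphi_{k}(z')}$, so the paper's first term is exactly your $[T_{k}(y)-T_{k}(z)]\Delta(z')$), and your two-case treatment of that critical term is just an explicit rendering of the same interpolation $\min\left(1,\frac{|y-z|}{h}\right)h^{2\mu}\leq c\,|y-z|^{2\mu}$ that the paper performs in one line using $|z^{1}-w|\leq h(z')$. The closing remark on the degenerate set $h=0$ is a welcome extra bit of care that the paper leaves implicit.
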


\begin{proof}
Let $y = (y^{1},y') \in \overline{ Q_{2r}^{k} } \cap Q_{2r} $ and $z = (z^{1},z') \in \overline{ Q_{2r}^{k} } \cap Q_{2r} $. Then 
\begin{equation}\label{GS430} 
\varphi_{k}(y') \leq y^{1} \leq \varphi_{k+1}(y')
\qquad \text{and} \qquad
\varphi_{k}(z') \leq z^{1} \leq \varphi_{k+1}(z').
\end{equation}
To prove the lemma, we take
\begin{equation}\label{GS440}
w = \frac{y^{1} - \varphi_{k}(y' )}{\varphi_{k+1}(y' ) - \varphi_{k}(y' )}[\varphi_{k+1}(z' ) - \varphi_{k}(z' )]  + \varphi_{k}(z' )
\in \mathbb{R}.
\end{equation}
Then we claim that
\begin{equation}\label{GS450}
w \in [\varphi_{k}(z'), \varphi_{k+1}(z')]
\qquad \text{and} \qquad
|(w,z' ) - y| + |(w,z' ) - z| \leq c |y - z|^{\gamma}.
\end{equation}
By a direct calculation using \eqref{GS440}, we have from \eqref{GS430} that
$$ w - \varphi_{k}(z' ) = \frac{y^{1} - \varphi_{k}(y' )}{\varphi_{k+1}(y' ) - \varphi_{k}(y' )}  [\varphi_{k+1}(z' ) - \varphi_{k}(z' )] \geq 0$$
and
$$ \varphi_{k+1}(z' ) - w 
= \frac{ \varphi_{k+1}(y' ) - y^{1}}{\varphi_{k+1}(y' ) - \varphi_{k}(y' )}  [\varphi_{k+1}(z' ) - \varphi_{k}(z' )]  \geq 0$$
which implies that
\begin{equation}\label{GS480}
w \in [\varphi_{k}(z' ), \varphi_{k+1}(z' )].
\end{equation}
To prove the second inequality of $\eqref{GS450}$, recall from \eqref{GS440} and \eqref{GS430} that
\begin{equation*}
w = \frac{y^{1} - \varphi_{k}(y' )}{\varphi_{k+1}(y' ) - \varphi_{k}(y' )} \left( [\varphi_{k+1}(z' ) - \varphi_{k+1}(y' ) ] - [ \varphi_{k}(z' )- \varphi_{k}(y' )] \right)
 + \varphi_{k}(z' ) - \varphi_{k}(y' ) + y^{1},
\end{equation*}
and 
\begin{equation*}
\left| \frac{y^{1} - \varphi_{k}(y' )}{\varphi_{k+1}(y' ) - \varphi_{k}(y' )} \right| \leq 1.
\end{equation*}
So we obtain that
\begin{equation}\label{GS510}
|w-z^{1}| \leq |\varphi_{k+1}(z' ) - \varphi_{k+1}(y' )| + 2|\varphi_{k}(z' )- \varphi_{k}(y' )| + |y^{1} - z^{1}|
\leq c |y - z|,
\end{equation}
and
\begin{equation}\label{GS520}
|w-y^{1}| \leq |\varphi_{k+1}(z' ) - \varphi_{k+1}(y' )| + 2|\varphi_{k}(z' )- \varphi_{k}(y' )| 
\leq c |y - z|.
\end{equation}
So the claim \eqref{GS450} holds from \eqref{GS480}, \eqref{GS510} and \eqref{GS520}.

\sskip

To estimate $\pi'(y) - \pi'(z)$, with \eqref{GS440}, we make the following computations that
\begin{equation*}\begin{aligned}\label{}
\frac{D_{x'}\varphi_{k+1}(z') [ z^{1} - \varphi_{k}(z')]}{ \varphi_{k+1}(z') - \varphi_{k}(z')} 
& = \frac{D_{x'}\varphi_{k+1}(z') [ z^{1} - w + w - \varphi_{k}(z')]}{ \varphi_{k+1}(z') - \varphi_{k}(z')} \\
& = \frac{D_{x'}\varphi_{k+1}(z')[z^{1} - w]}{\varphi_{k+1}(z') - \varphi_{k}(z')}
+ \frac{D_{x'}\varphi_{k+1}(z') [ y^{1} - \varphi_{k}(y')]}{ \varphi_{k+1}(y') - \varphi_{k}(y')},
\end{aligned}\end{equation*}
and
\begin{equation*}\begin{aligned}\label{}
\frac{D_{x'}\varphi_{k}(z')[\varphi_{k+1}(z') - z^{1}]}{\varphi_{k+1} (z') - \varphi_{k}(z')} 
& = \frac{D_{x'}\varphi_{k}(z')[\varphi_{k+1}(z') -  w + w - z^{1}]}{\varphi_{k+1} (z')  - \varphi_{k}(z')} \\
& = \frac{D_{x'}\varphi_{k}(z')[ \varphi_{k+1}(y') - y^{1}]}{\varphi_{k+1}(y') - \varphi_{k}(y')} 
+ \frac{D_{x'}\varphi_{k}(z') [ w- z^{1}]}{\varphi_{k+1}(z') - \varphi_{k}(z')}.
\end{aligned}\end{equation*}
By the definition of $\pi'(y)$ and $\pi'(z)$ in \eqref{GS260},
\begin{equation*}\begin{aligned}
\pi'(z) - \pi'(y) 
& = \left[ \frac{D_{x'}\varphi_{k+1}(z')[z^{1} - \varphi_{k}(z')]}{\varphi_{k+1}(z') - \varphi_{k}(z')} + \frac{D_{x'}\varphi_{k}(z')[\varphi_{k+1}(z') - z^{1}]}{\varphi_{k+1}(z') - \varphi_{k}(z')}  \right] \\
& \quad - \left[ \frac{D_{x'}\varphi_{k+1}(y')[y^{1} - \varphi_{k}(y')]}{\varphi_{k+1}(y') - \varphi_{k}(y')} + \frac{D_{x'}\varphi_{k}(y')[\varphi_{k+1}(y') - y^{1}]}{\varphi_{k+1}(y') - \varphi_{k}(y')}  \right].
\end{aligned}\end{equation*}
So it follows that
\begin{equation*}\begin{aligned}
\pi'(z)-\pi'(y) 
& =  \frac{[D_{x'}\varphi_{k+1}(z') - D_{x'}\varphi_{k}(z')][z^{1} - w]}{\varphi_{k+1}(z') - \varphi_{k}(z')} \\
&\quad +\frac{[D_{x'}\varphi_{k}(z') - D_{x'}\varphi_{k}(y')][\varphi_{k+1}(y') - y^{1}]}{\varphi_{k+1}(y') - \varphi_{k}(y')}  \\
& \quad +  \frac{[D_{x'}\varphi_{k+1}(z') - D_{x'}\varphi_{k+1}(y')][y^{1} - \varphi_{k}(y')]}{\varphi_{k+1}(y') - \varphi_{k}(y')} .
\end{aligned}\end{equation*}
It only remains to estimate the right-hand side of the above equality. By \eqref{GS400},
\begin{equation}\label{GS570} 
|D_{x'}\varphi_{k+1}(z') - D_{x'}\varphi_{k}(z')|
\leq \kappa \left( \frac{ |\varphi_{k+1}(z') - \varphi_{k}(z')| }{R} \right)^{2\mu}.
\end{equation}
From \eqref{GS430} and \eqref{GS450}, we have that 
\begin{equation}\label{GS580}
w, z^{1} \in [\varphi_{k}(z' ), \varphi_{k+1}(z' )],
\qquad \text{and} \qquad 
y^{1} \in [\varphi_{k}(y'),\varphi_{k+1}(y')].
\end{equation}
So  by \eqref{GS405}, \eqref{GS570} and \eqref{GS580}, we obtain that
\begin{equation*}\label{}
\big| \pi'(z)-\pi'(y)  \big|
\leq c\kappa \left[ \left( \frac{ |z^{1}-w| }{R} \right)^{2\mu} +  \left( \frac{ |y' - z'| }{R} \right)^{2\mu}  \right],
\end{equation*}
and the lemma follows from \eqref{GS510}.
\end{proof}

With Lemma \ref{GSS400}, we now obtain H\"{o}lder continuity of $\pi'$ in $Q_{2r}$.

\begin{lem}\label{GSS600}
Under the assumption \eqref{GS400} and \eqref{GS405}, we have that
\begin{equation*}
\big| \pi'(y) - \pi'(z) \big| \leq c \kappa \left( \frac{ |y-z| }{ R } \right)^{2\mu}
\qquad (y,z \in Q_{2r}).
\end{equation*}
\end{lem}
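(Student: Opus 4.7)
The plan is to reduce the general case to the single-region estimate of Lemma \ref{GSS400} by splitting the difference through well-chosen intermediate values. Fix $y \in \overline{Q_{2r}^{k_1}}$ and $z \in \overline{Q_{2r}^{k_2}}$, and assume without loss of generality $k_1 \leq k_2$. The case $k_1 = k_2$ is precisely Lemma \ref{GSS400}, so suppose $k_1 < k_2$. In this case $y$ and $z$ straddle the interface $\{x^1 = \varphi_{k_1+1}(x')\}$: indeed $y^1 \leq \varphi_{k_1+1}(y')$ while $z^1 \geq \varphi_{k_2}(z') \geq \varphi_{k_1+1}(z')$.

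A short triangle-inequality argument, based on this straddling and the Lipschitz bound $M = \sup_{k \in K_+} \| D_{x'}\varphi_k \|_{L^\infty(Q_{2r}')}$, yields three geometric estimates:
\begin{equation*}
\varphi_{k_1+1}(y') - y^1 \leq c|y-z|, \quad z^1 - \varphi_{k_2}(z') \leq c|y-z|, \quad \varphi_{k_2}(z') - \varphi_{k_1+1}(z') \leq c|y-z|.
\end{equation*}
All three follow in the same way; for instance $\varphi_{k_1+1}(y') - y^1 \leq [\varphi_{k_1+1}(y') - \varphi_{k_1+1}(z')] + [\varphi_{k_1+1}(z') - z^1] + [z^1 - y^1] \leq M|y-z| + 0 + |y-z|$, and similarly for the others (using also $\varphi_{k_2}(y') \geq \varphi_{k_1+1}(y') \geq y^1$ for the second).

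With these in hand, I pivot the difference through three values of $D_{x'}\varphi$:
\begin{align*}
|\pi'(y) - \pi'(z)| &\leq |\pi'(y) - D_{x'}\varphi_{k_1+1}(y')| + |D_{x'}\varphi_{k_1+1}(y') - D_{x'}\varphi_{k_1+1}(z')| \\
&\quad + |D_{x'}\varphi_{k_1+1}(z') - D_{x'}\varphi_{k_2}(z')| + |D_{x'}\varphi_{k_2}(z') - \pi'(z)|.
\end{align*}
The second term is immediate from \eqref{GS405}, and the third is controlled by $\kappa(|\varphi_{k_2}(z') - \varphi_{k_1+1}(z')|/R)^{2\mu} \leq c\kappa(|y-z|/R)^{2\mu}$ via \eqref{GS400} and the third geometric bound. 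For the first term, the formula \eqref{GS420} gives
\begin{equation*}
\pi'(y) - D_{x'}\varphi_{k_1+1}(y') = (1-T_{k_1}(y))\bigl[D_{x'}\varphi_{k_1}(y') - D_{x'}\varphi_{k_1+1}(y')\bigr];
\end{equation*}
combining \eqref{GS400} with the identity $(1-T_{k_1}(y))[\varphi_{k_1+1}(y')-\varphi_{k_1}(y')] = \varphi_{k_1+1}(y') - y^1$ and the elementary inequality $a\,b^{2\mu-1} \leq a^{2\mu}$ (valid for $0 \leq a \leq b$ since $2\mu \leq 1$) produces $|\pi'(y) - D_{x'}\varphi_{k_1+1}(y')| \leq \kappa R^{-2\mu} (\varphi_{k_1+1}(y')-y^1)^{2\mu} \leq c\kappa (|y-z|/R)^{2\mu}$, and the fourth term is treated identically at $z$ using the second geometric bound. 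The main (mild) obstacle is cleanly recording the three geometric estimates; once those are in hand, the lemma follows from \eqref{GS400}, \eqref{GS405} and the formula for $\pi'$.
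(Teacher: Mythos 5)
Your proof is correct, but it takes a genuinely different route from the paper's for the cross-region case. The paper handles two points in different regions by joining them with a line segment, selecting the points $w_{k+1},\dots,w_{l}$ where that segment meets each interface graph, and then chaining Lemma \ref{GSS400} across consecutive regions; the final constant therefore carries a factor of the number of elements of $K$, and one has to accept the (unstated) intermediate-value argument producing the ordered crossing points. You instead pivot through the boundary slopes $D_{x'}\varphi_{k_1+1}(y')$, $D_{x'}\varphi_{k_1+1}(z')$, $D_{x'}\varphi_{k_2}(z')$, exploiting the affine interpolation structure of $\pi'$ directly: the identities $(1-T_{k_1}(y))[\varphi_{k_1+1}(y')-\varphi_{k_1}(y')]=\varphi_{k_1+1}(y')-y^{1}$ and $a\,b^{2\mu-1}\le a^{2\mu}$ for $0\le a\le b$ convert the endpoint terms into powers of the vertical gaps, the non-crossing estimate \eqref{GS400} is applied once to the pair $(\varphi_{k_1+1},\varphi_{k_2})$, and the three geometric bounds you record are each a correct one-line consequence of the Lipschitz bound on the graphs together with $\varphi_{k_1+1}(y')\ge y^{1}$ and $z^{1}\ge\varphi_{k_2}(z')\ge\varphi_{k_1+1}(z')$. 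This buys you a four-term decomposition independent of how many regions separate $y$ and $z$, so your constant does not depend on $|K|$, whereas the paper's chaining is arguably more mechanical since it reuses Lemma \ref{GSS400} verbatim. The only point worth flagging is the degenerate situation $\varphi_{k_1}(y')=\varphi_{k_1+1}(y')$, where $T_{k_1}$ is not defined; but this degeneracy is already implicit in Definition \ref{derivative of flow} and affects the paper's proof equally, so it is not a gap specific to your argument.
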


\begin{proof}
If $y,z \in Q_{2r}^{k}$ $(k \in K)$ then the lemma holds from Lemma \ref{GSS400}. So suppose that $y \in Q_{2r}^{k}$ and $z \in Q_{2r}^{l}$ with $k < l$ $(k,l \in K)$. Let $\alpha : [0,1] \to Q_{2r}$ be a line connecting $y$ and $z$. Then on the line $\alpha$, one can choose the points
\begin{flalign}
\nonumber w_{k+1}= \left( \varphi_{k+1}(w_{k+1}'), w_{k+1}' \right) & \in \overline{ Q_{2r}^{k}} \cap \overline{ Q_{2r}^{k+1}}, \\
\label{GS630} w_{k+2}=\left( \varphi_{k+2}(w_{k+2}'), w_{k+2}' \right) & \in \overline{ Q_{2r}^{k+1}} \cap \overline{ Q_{2r}^{k+2}}, \\
\nonumber \vdots & \\
\nonumber w_{l}=\left( \varphi_{l}(w_{l}'), w_{l}' \right) & \in \overline{ Q_{2r}^{l-1}} \cap \overline{ Q_{2r}^{l}}.
\end{flalign}
Let $y=w_{k}$ and $z=w_{l+1}$. Since $y \in Q_{2r}^{k}$ and $z \in Q_{2r}^{l}$, we find from \eqref{GS630} and  Lemma \ref{GSS400} that
\begin{equation*} 
\left| \pi'(w_{m}) - \pi' \left( w_{m+1} \right) \right|
\leq c \kappa \left( \frac{ \left| w_{m} -  w_{m+1}  \right| }{ R } \right)^{2\mu} \qquad \left( m=k,\cdots,l \right).
\end{equation*}
Since the points  $y=w_{k}$, $w_{k+1}$, $w_{k+1}$, $\cdots$, $w_{l}$ and $w_{l+1}=z$ are placed on the line connecting $y$ and $z$, we get that
\begin{equation*}
\left|  \pi' \left( w_{m} \right) - \pi'(w_{m+1})  \right|
\leq c \kappa \left( \frac{ |y-z| }{ R } \right)^{2\mu} \qquad 
\left( m=k,\cdots,l \right).
\end{equation*}
So the lemma follows by the triangle inequality.
\end{proof}

\section{Gradient estimates for reference equations}\label{GELE}

In this section, we obtain gradient estimates for when the coefficients are measurable in one variable. The results in this section is based on \cite{DH1} and \cite[Lemma 3.5]{DHKD1}, but we write this section for the convenience of the readers. For the extension to nonlinear problems, see \cite{BSKY1}. For this section, we employ the letter $c \geq 1$ to denote any constants that can be explicitly computed in terms such as $n$, $N$, $\lambda$ and $\Lambda$.

\smallskip 

Assume that
\begin{equation}\label{GELE1100} 
\lambda |\xi|^{2} 
\leq A_{ij}^{\alpha \beta }(x^{1}) \xi_{\alpha}^{i} \xi_{\beta}^{j} 
\qquad \text{and} \qquad
\left| A_{ij}^{\alpha \beta}(x^{1}) \right| \leq \Lambda 
\qquad
\left( x \in Q_{2}, \ \xi \in \br^{nN} \right),
\end{equation}
for some positive constant $\lambda$ and $\Lambda$.
Let $h \in W^{1,2} \left( Q_{2} ,\br^{N} \right)$ be a weak solution of
\begin{equation}\label{GELE1300}
D_{\alpha} \left[ A_{ij}^{\alpha \beta} (x^{1}) D_{\beta}h^{j} \right] = 0 
\quad \text{ in } \quad Q_{2}.
\end{equation}

We first have the following energy estimate in the following lemma.

\begin{lem}\label{GELES2000}
Under the assumption \eqref{GELE1100}, let $h$ be a weak solution of \eqref{GELE1300}. Then
\begin{equation*}\label{} 
\int_{Q_{\rho}}  | Dh |^{2}  \, dx
\leq \frac{ c }{ \rho^{2} } \int_{Q_{2\rho}}  
\big|  h - (h)_{Q_{2\rho}} \big|^{2}  \, dx,
\end{equation*}
for any $\rho \in (0,1]$. 
\end{lem}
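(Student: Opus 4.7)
The plan is to prove this as a standard Caccioppoli (reverse Poincar\'e) inequality. The fact that the coefficients depend only on $x^{1}$ plays no role here; we use only the ellipticity and boundedness assumed in \eqref{GELE1100} together with $D_{\alpha}[A_{ij}^{\alpha\beta} D_{\beta} h^{j}] = 0$.

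First, I would fix $\rho \in (0,1]$ and choose a standard cutoff function $\eta \in C_{c}^{\infty}(Q_{2\rho})$ satisfying $0 \leq \eta \leq 1$, $\eta \equiv 1$ on $Q_{\rho}$, and $|D\eta| \leq c/\rho$. Since the equation has zero right-hand side, subtracting the constant vector $c := (h)_{Q_{2\rho}}$ from $h$ preserves the equation, so I can test with the admissible test function $\varphi^{i} = \eta^{2}(h^{i} - c^{i})$. The weak formulation then yields
\begin{equation*}
0 = \int_{Q_{2\rho}} A_{ij}^{\alpha\beta}(x^{1}) \, D_{\beta} h^{j} \, D_{\alpha}\bigl[\eta^{2}(h^{i} - c^{i})\bigr] \, dx,
\end{equation*}
which after expanding $D_{\alpha}[\eta^{2}(h^{i}-c^{i})] = \eta^{2} D_{\alpha} h^{i} + 2\eta D_{\alpha}\eta \,(h^{i}-c^{i})$ gives
\begin{equation*}
\int_{Q_{2\rho}} \eta^{2} A_{ij}^{\alpha\beta} D_{\beta} h^{j} D_{\alpha} h^{i} \, dx = -2\int_{Q_{2\rho}} \eta \, A_{ij}^{\alpha\beta} D_{\beta} h^{j} \, D_{\alpha}\eta \,(h^{i}-c^{i}) \, dx.
\end{equation*}

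Next, I would apply the ellipticity lower bound from \eqref{GELE1100} to the left-hand side and the pointwise bound $|A_{ij}^{\alpha\beta}| \leq \Lambda$ to the right-hand side, obtaining
\begin{equation*}
\lambda \int_{Q_{2\rho}} \eta^{2} |Dh|^{2} \, dx \leq 2\Lambda \int_{Q_{2\rho}} \eta \,|D\eta|\,|Dh|\,|h-c| \, dx.
\end{equation*}
Then a Young-type inequality $2\Lambda \eta |D\eta| |Dh| |h-c| \leq \tfrac{\lambda}{2}\eta^{2}|Dh|^{2} + \tfrac{2\Lambda^{2}}{\lambda}|D\eta|^{2}|h-c|^{2}$ lets me absorb half of the gradient term into the left and, using $|D\eta| \leq c/\rho$ and $\eta \equiv 1$ on $Q_{\rho}$, deduce
\begin{equation*}
\int_{Q_{\rho}} |Dh|^{2} \, dx \leq \frac{c}{\rho^{2}} \int_{Q_{2\rho}} \bigl|h - (h)_{Q_{2\rho}}\bigr|^{2} \, dx,
\end{equation*}
with $c$ depending only on $n$, $N$, $\lambda$, $\Lambda$, as required.

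There is no real obstacle here; this is textbook Caccioppoli. The only mild subtlety is verifying that one may subtract the vector-valued constant $c = (h)_{Q_{2\rho}}$ before testing (legitimate since constants lie in the kernel of the divergence operator regardless of the coefficients' dependence on $x$) and choosing the Young's inequality constants so that the $|Dh|^{2}$ term can be absorbed. Everything else is the standard energy argument applied componentwise via the coercivity hypothesis in \eqref{GELE1100}.
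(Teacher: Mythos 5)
Your proof is correct and follows essentially the same route as the paper: the same cutoff $\phi$ with $|D\phi|\leq c\rho^{-1}$, the same test function $\phi^{2}(h^{i}-(h^{i})_{Q_{2\rho}})$, and the same ellipticity-plus-Young's-inequality absorption. No issues.
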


\begin{proof}
Fix $\rho \in (0,1]$. In view of \eqref{GELE1300}, 
\begin{equation}\label{GELE2300}
D_{\alpha}\left[ A_{ij}^{\alpha \beta} (x^{1}) D_{\beta}\left( h^{j} - (h^{j})_{Q_{2\rho}} \right) \right] = 0 ~ \text{ in } ~ Q_{2\rho}.
\end{equation}
Choose a cut-off function $\phi \in C_{c}^{\infty}(Q_{2\rho})$ with
\begin{equation}\label{GELE2400} 
0 \leq \phi \leq 1,
\qquad 
|D\phi| \leq  c\rho^{-1}
\qquad \text{and} \qquad
\phi = 1 ~ \text{ in } ~ Q_{\rho}.
\end{equation}
We test \eqref{GELE2300} by $\left[ h^{i} - (h^{i})_{Q_{2\rho}} \right] \phi^{2}$ to find that
\begin{equation*}\begin{aligned}\label{} 
& \int_{Q_{2\rho}} \left \langle A_{ij}^{\alpha \beta} (x^{1}) D_{\beta} \left[ h^{j} - (h^{j})_{Q_{2\rho}} \right] , D_{\alpha}\left[ h^{i} - (h^{i})_{Q_{2\rho}} \right] \right \rangle \phi^{2} \, dx \\
& \quad = - \int_{Q_{2\rho}} \left \langle A_{ij}^{\alpha \beta} (x^{1}) D_{\beta} \left[ h^{j} - (h^{j})_{Q_{2\rho}} \right] , \left[ h^{i} - (h^{i})_{Q_{2\rho}} \right] 2\phi D_{\alpha} \phi \right \rangle  \, dx .
\end{aligned}\end{equation*}
Then by \eqref{GELE1100} and Young's inequality, we have
\begin{equation*}\label{} 
\int_{Q_{2\rho}}  
\big| Dh \big|^{2} \phi^{2} \, dx
\leq c \int_{Q_{2\rho}}  
\big|  h - (h)_{Q_{2\rho}} \big|^{2}  |D\phi|^{2} \, dx,
\end{equation*}
and the lemma follows from \eqref{GELE2400}.
\end{proof}

We have the following higher order estimate in the following lemma.

\begin{lem}\label{GELES3000}
Under the assumption \eqref{GELE1100}, let $h$ be a weak solution of \eqref{GELE1300}. Then for any integer $ p \geq 0 $, we have that
\begin{equation*}\label{} 
\int_{Q_{1}}  
\big| D_{x'}^{p}Dh \big|^{2}\, dx
\leq c(p)  \int_{Q_{2}}  
\big|  h  - (h)_{Q_{2}} \big|^{2} \, dx.
\end{equation*}
\end{lem}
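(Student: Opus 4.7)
The plan is to exploit the structural fact that $A_{ij}^{\alpha\beta}$ depends only on $x^{1}$, which makes tangential derivatives commute with the equation. Specifically, since the coefficients are independent of $x' = (x^{2},\dots,x^{n})$, formally differentiating \eqref{GELE1300} in a tangential direction $\alpha' \in \{2,\dots,n\}$ shows that $D_{\alpha'}h$ is again a weak solution of the same equation; iterating, $D_{x'}^{q}h \in W^{1,2}_{\mathrm{loc}}(Q_{2})$ is a weak solution of \eqref{GELE1300} for every integer $q \geq 0$. Rigorously this is established by the standard difference quotient argument, using Lemma \ref{GELES2000} to obtain the uniform bound needed to pass to the limit.

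The second ingredient is a Caccioppoli-type estimate on arbitrary concentric nested cubes, which is obtained by repeating the cut-off argument in the proof of Lemma \ref{GELES2000}. For any weak solution $v$ of \eqref{GELE1300}, any constant $\xi \in \br^{N}$ and any concentric cubes $Q_{a} \subset Q_{b} \subset Q_{2}$,
\begin{equation*}
\int_{Q_{a}} |Dv|^{2} \, dx \leq \frac{c}{(b-a)^{2}} \int_{Q_{b}} |v - \xi|^{2} \, dx.
\end{equation*}
I will take $\xi = 0$ at the intermediate stages and $\xi = (h)_{Q_{2}}$ at the final stage, which is what eventually produces the $|h - (h)_{Q_{2}}|$ on the right-hand side.

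With these tools the proof is a telescoping argument on the nested cubes $Q_{r_{k}}$ with $r_{k} = 1 + k/(p+1)$, so $r_{0} = 1$, $r_{p+1} = 2$ and $r_{k+1} - r_{k} = 1/(p+1)$. At stage $j \in \{0,1,\dots,p-1\}$, apply the Caccioppoli estimate with $\xi = 0$ to the solution $D_{x'}^{p-j}h$ on the pair $Q_{r_{j}} \subset Q_{r_{j+1}}$, and then dominate pointwise $|D_{x'}^{p-j}h|^{2} \leq c\,|D\,D_{x'}^{p-j-1}h|^{2}$ (since $D_{x'}^{p-j}h$ is a component, up to combinatorial constants, of the full gradient of $D_{x'}^{p-j-1}h$). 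Each stage therefore reduces the number of $x'$-derivatives on the inside by one while enlarging the cube by $1/(p+1)$. After $p$ stages we land at $\int_{Q_{r_{p}}} |Dh|^{2}\,dx$, and a final application of Lemma \ref{GELES2000} on $Q_{r_{p}} \subset Q_{2}$ (this time with the mean $\xi = (h)_{Q_{2}}$) closes the chain. Collecting the constants yields the desired bound with $c(p) \sim c^{p+1}(p+1)^{2(p+1)}$.

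The only genuinely delicate point is Step~1: that the tangential derivatives $D_{x'}^{q}h$ truly exist in $W^{1,2}_{\mathrm{loc}}(Q_{2})$ and solve \eqref{GELE1300}. This is where the hypothesis that $A$ depends on $x^{1}$ alone is indispensable, since otherwise differentiating the equation would introduce extra terms of the form $D_{\alpha'}A_{ij}^{\alpha\beta}\,D_{\beta}h$ that would not vanish. The cleanest implementation is via difference quotients in the tangential directions: the translated equation has the same coefficients, so $\Delta_{t,e'}h$ satisfies \eqref{GELE1300}, and Lemma \ref{GELES2000} delivers a $t$-uniform Caccioppoli bound allowing extraction of $D_{e'}h$ as a weak solution in the limit $t \to 0$. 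Everything else in the proof is bookkeeping.
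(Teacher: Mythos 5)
Your proof is correct and follows essentially the same route as the paper: an iterated Caccioppoli (cut-off and test) estimate on a chain of $p+1$ nested concentric cubes interpolating between $Q_{1}$ and $Q_{2}$, using that tangential derivatives $D_{x'}^{q}h$ again solve \eqref{GELE1300} because the coefficients depend only on $x^{1}$. The only difference is that you supply the difference-quotient justification for the existence of $D_{x'}^{q}h$ as a weak solution, which the paper's proof takes for granted when it differentiates the equation in \eqref{GELE3300}.
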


\begin{proof}
If $p=0$ then the lemma holds from Lemma \ref{GELES2000}. So we assume that $p \geq 1$. 

Let $q \in \{ 0, \cdots, p \}$ and $\rho_{q} = 1 +  \frac{1}{q+1}$. Fix $(0,\xi') \in \mathbb{N}^{n}$ with $|(0,\xi')| = q$. Then we have that
\begin{equation}\label{GELE3300}
D_{\alpha} \Big( A_{ij}^{\alpha \beta} (x^{1}) D_{\beta} \left[ D_{\xi'} \left( h^{j} - (h^{j})_{Q_{2\rho}} \right) \right] \Big) = 0 \text{ in } Q_{2}.
\end{equation}
Choose a cut-off function $\phi \in C_{c}^{\infty}(Q_{\rho_{q}})$ with
\begin{equation}\label{GELE3400} 
0 \leq \phi \leq 1,
\qquad 
|D\phi| \leq  c(q)
\qquad \text{and} \qquad
\phi = 1 \text{ in } Q_{\rho_{q+1}}.
\end{equation}
We test \eqref{GELE3300} by $D_{\xi'}\left( h^{i} - (h^{i})_{Q_{2}} \right) \phi^{2}$ to find that
\begin{equation*}\begin{aligned}\label{} 
& \int_{Q_{\rho_{q}}} \big \langle A_{ij}^{\alpha \beta} (x^{1}) D_{\beta} \left[ D_{\xi'} \left( h^{j} - (h^{j})_{Q_{2}} \right)  \right] , D_{\alpha} \left[ D_{\xi'} \left( h^{i} - (h^{i})_{Q_{2}} \right) \right] \big \rangle \phi^{2} \, dx \\
& \quad = - \int_{Q_{\rho_{q}}} \big \langle A_{ij}^{\alpha \beta} (x^{1}) D_{\beta} \left[ D_{\xi'} \left( h^{i} - (h^{i})_{Q_{2}} \right) \right] , \left[ D_{\xi'}\left( h^{i} - (h^{i})_{Q_{2}} \right) \right] 2\phi D_{\alpha} \phi \big \rangle  \, dx .
\end{aligned}\end{equation*}
Then by \eqref{GELE1300} and Young's inequality,
\begin{equation*}\label{} 
\int_{Q_{\rho_{q}}}  
\big| DD_{\xi'}\left( h - (h)_{Q_{2\theta}} \right)  \big|^{2} \phi^{2} \, dx
\leq c \int_{Q_{\rho_{q}}}  
\big|  D_{\xi'} \left( h - (h)_{Q_{2\theta}} \right)  \big|^{2}  |D\phi|^{2} \, dx,
\end{equation*}
whence we have from \eqref{GELE3400} that 
\begin{equation*}\label{} 
\int_{Q_{\rho_{q+1}}}  
\big| DD_{\xi'}\left( h - (h)_{Q_{2}} \right) \big|^{2}\, dx
\leq c(q)\theta^{-2} \int_{Q_{\rho_{q}}}  
\big|  D_{\xi'}\left( h - (h)_{Q_{2}} \right) \big|^{2} \, dx.
\end{equation*}
Since $\xi'\in \mathbb{N}^{n-1}$ was arbitrary chosen, we find that
\begin{equation*}\label{} 
\int_{Q_{\rho_{q+1}}}  
\big| DD_{x'}^{q}\left( h - (h)_{Q_{2}} \right)  \big|^{2}\, dx
\leq c(q) \int_{Q_{\rho_{q}}}  
\big|  D_{x'}^{q} \left( h - (h)_{Q_{2}} \right) \big|^{2} \, dx.
\end{equation*}
Also since $q \in \{ 0, \cdots, p \}$ was arbitrary chosen, if we apply induction, we get that
\begin{equation*}\label{} 
\int_{Q_{1}}  
\big| DD_{x'}^{p}h \big|^{2}\, dx
\leq \int_{Q_{\rho_{p+1}}}  
\big| DD_{x'}^{p}h \big|^{2}\, dx
\leq c(p)   \int_{Q_{2}}  
\big|  h - (h)_{Q_{2}} \big|^{2} \, dx.
\end{equation*}
and this complete the proof.
\end{proof}

\begin{lem}\label{GELES5000}
Under the assumption \eqref{GELE1100}, let $h$ be a weak solution of \eqref{GELE1300}. Then
\begin{equation*}\label{}
|h(x)-h(y)|^{2}
\leq c |x-y|
\int_{Q_{2}} \big| h - (h)_{Q_{2}} \big|^{2} \, dx,
\end{equation*}
for any $x,y \in Q_{1}$.
\end{lem}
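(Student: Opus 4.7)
The strategy exploits the fact that since $A = A(x^{1})$ depends only on $x^{1}$, every tangential derivative $D_{x'}^{q} h$ is again a weak solution of \eqref{GELE1300}, so Lemma \ref{GELES3000} provides arbitrary $x'$-regularity with uniform $L^{2}$-bounds in terms of $\int_{Q_{2}} |h - (h)_{Q_{2}}|^{2}\, dx$. The H\"{o}lder bound then emerges by splitting, for $x = (x^{1}, x'), y = (y^{1}, y') \in Q_{1}$,
\[
h(x) - h(y) = \bigl[h(x^{1}, x') - h(x^{1}, y')\bigr] + \bigl[h(x^{1}, y') - h(y^{1}, y')\bigr],
\]
and controlling the tangential and normal differences separately.

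For the tangential difference, I would fix an integer $p$ with $p > (n-1)/2$. Applying Lemma \ref{GELES3000} with indices $p$ and $p+1$ yields that both $\|D_{x'}^{p+1} h\|_{L^{2}(Q_{1})}$ and $\|D_{1} D_{x'}^{p+1} h\|_{L^{2}(Q_{1})}$ are controlled by $c(p) \bigl(\int_{Q_{2}} |h - (h)_{Q_{2}}|^{2}\, dx\bigr)^{1/2}$. The trace embedding $H^{1}((-1,1); L^{2}(Q_{1}')) \hookrightarrow C^{0}([-1,1]; L^{2}(Q_{1}'))$ then controls $\|D_{x'}^{p+1} h(x^{1}, \cdot)\|_{L^{2}(Q_{1}')}$ uniformly in $x^{1}$, and the slicewise Sobolev embedding $H^{p+1}(Q_{1}') \hookrightarrow L^{\infty}(Q_{1}')$ (valid since $p+1 > (n-1)/2$) upgrades this to
\[
\|D_{x'} h\|_{L^{\infty}(Q_{1})}^{2} \leq c \int_{Q_{2}} |h - (h)_{Q_{2}}|^{2}\, dx,
\]
whence $|h(x^{1}, x') - h(x^{1}, y')|^{2} \leq c|x' - y'|^{2} \int_{Q_{2}} |h - (h)_{Q_{2}}|^{2}\, dx$.

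For the normal difference, Cauchy--Schwarz along $\{(s, y') : s \in [y^{1}, x^{1}]\}$ gives $|h(x^{1}, y') - h(y^{1}, y')|^{2} \leq |x^{1} - y^{1}| \int_{-1}^{1} |D_{1} h(s, y')|^{2}\, ds$. For each $s$, slicewise Sobolev gives $\|D_{1} h(s, \cdot)\|_{L^{\infty}(Q_{1}')}^{2} \leq c\sum_{k=0}^{p} \|D_{x'}^{k} D_{1} h(s, \cdot)\|_{L^{2}(Q_{1}')}^{2}$; integrating in $s$ and invoking Lemma \ref{GELES3000} with $D = D_{1}$ yields $\sup_{y' \in Q_{1}'} \int_{-1}^{1} |D_{1} h(s, y')|^{2}\, ds \leq c \int_{Q_{2}} |h - (h)_{Q_{2}}|^{2}\, dx$. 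Combining the two pieces and using $|x' - y'|^{2} \leq c|x - y|$ for $x, y \in Q_{1}$ (bounded diameter) produces $|h(x) - h(y)|^{2} \leq c|x-y| \int_{Q_{2}} |h - (h)_{Q_{2}}|^{2}\, dx$. The main obstacle is obtaining the uniform (in $x^{1}$) pointwise bound on $D_{x'} h$: a.e.\ slicewise control follows immediately from Lemma \ref{GELES3000}, but passing to a uniform pointwise bound requires combining arbitrarily high tangential regularity with the $H^{1}$-trace embedding in the normal direction, as done above.
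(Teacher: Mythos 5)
Your proposal is correct and follows essentially the same route as the paper: the same splitting of $h(x)-h(y)$ into tangential and normal increments, the same use of Lemma \ref{GELES3000} for arbitrarily high $x'$-regularity, and the same combination of the one-dimensional Sobolev (trace) embedding in $x^{1}$ with the slicewise Sobolev embedding $H^{p}(Q_{1}')\hookrightarrow L^{\infty}(Q_{1}')$. The only cosmetic difference is that you derive a uniform bound on $\|D_{x'}h\|_{L^{\infty}(Q_{1})}$ and then use $|x'-y'|^{2}\leq c|x-y|$ on the bounded cube, whereas the paper applies the embedding directly to the difference and obtains the factor $|x'-y'|$ at once.
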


\begin{proof}
By using an approximation argument, we may assume that $h \in C^{1}(Q_{2})$. To use the Sobolev type embedding, we take an integer $p > \frac{n}{2}$.

Let $(x^{1},x'), (y^{1},y') \in Q_{1}$. We use the Sobolev embedding theorem in $x^1$-variable to find 
\begin{equation*}
\left| h(x^{1},x') - h(y^{1},x') \right|^{2} 
\leq c \left| x^{1}-y^{1} \right|  \int_{(-1,1)} |D_{1}h(z^{1},x')|^{2} \, dz^{1}.
\end{equation*}
Also for any fixed $z^{1} \in (-1,1)$, applying the Sobolev embedding theorem in $x'$-variable, we have
\begin{equation*} 
\left| D_{1}h(z^{1},x') \right|^{2}
\leq c \sum_{ 0 \leq q \leq p } \int_{Q_{1}'} \left| D_{x'}^{q}D_{1}h(z^{1},z') \right|^{2} \, dz'.
\end{equation*}
So it follows that
\begin{equation}\label{GELE5500} 
\left| h(x^{1},x') - h(y^{1},x') \right|^{2}
\leq c |x-y|
\sum_{ 0 \leq q \leq p } \int_{Q_{1}} \left| D_{x'}^{q}D_{1}h(z^{1},z') \right|^{2} \, dz^{1}dz'.
\end{equation}

On the other hand, by applying the Sobolev embedding theorem first in $x'$-variable and then in $x^1$-variable, we obtain that
\begin{equation*}\label{} 
\left| h(x^{1},x') - h(x^{1},y') \right| ^{2}
\leq c |x'-y'|
\sum_{ 1 \leq q \leq p } \int_{Q_{1}'} \left| D_{x'}^{q}h(x^{1},z') \right|^{2} \, dz', \end{equation*}
and 
\begin{equation*}\label{}  
\left| D_{x'}^{q}h(x^{1},z') \right|^{2}
\leq c \int_{(-1,1)}  \left| D_{x'}^{q}h(z^{1},z') \right|^{2}  + \left| D_{x'}^{q}D_{1}h(z^{1},z') \right|^{2} \, dz^{1}
\qquad (0 \leq q \leq p),
\end{equation*}
for any fixed $z^{1} \in (-1,1)$. So we have that
\begin{equation}\label{GELE5800} 
\left| h(x^{1},x') - h(x^{1},y') \right|^{2} 
\leq c |x-y|
\sum_{ 0 \leq q \leq p } \int_{Q_{1}} \left| D_{x'}^{q}Dh(z^{1},z') \right|^{2} \, dz^{1}dz'. 
\end{equation}

By combining \eqref{GELE5500} and \eqref{GELE5800}, we discover that
\begin{equation*}\label{} 
\left| h(x^{1},x') - h(y^{1},y') \right|^{2}
\leq c |x-y| \sum_{ 0 \leq q \leq p } \int_{Q_{1}} \left| D_{x'}^{q}Dh(z) \right|^{2} \, dz.
\end{equation*}
Now using Lemma \ref{GELES3000}, we finish the proof of the lemma.
\end{proof}

Next, we handle the inhomogeneous case. Under the assumptions \eqref{GELE1100}, let $w \in W^{1,2} \left( Q_{2}, \br^{N} \right)$ be a weak solution of
\begin{equation}\label{GELE6100}
D_{\alpha}\left[ A_{ij}^{\alpha \beta} (x^{1}) D_{\beta}w^{j} \right] = D_{1} F_{1}^{i}(x^{1}) 
\quad \text{ in } \quad Q_{2}.
\end{equation}
Set $W : Q_{2} \to \br^{Nn}$ where $W = \left( W^{1}, \cdots, W^{N} \right)^{T}$ and
\begin{equation}\label{GELE7100} 
W^{i} = \left( \left[ -  \sum_{1 \leq j \leq N} \sum_{1 \leq \beta \leq n}  A_{ij}^{1 \beta}(x^{1}) D_{\beta}w^{j} \right] + F_{1}^{i}(x^{1}), D_{x'}w^{i} \right)
\end{equation}
for any $1 \leq i \leq N$. Then $A_{ij}^{1 \beta} (x^{1}) D_{\beta}w^{j}  - F_{1}^{i}(x^{1})$ is weakly differentiable for any $i \in \{ 1, \cdots, N \}$ as in the following lemma.

\begin{lem}\label{GELES6000}
With \eqref{GELE1100}, let $w$ be a weak solution of \eqref{GELE6100}. Then
\begin{equation*}\label{} 
\left| DW_{1} \right|
\leq c \left| DD_{x'}w(x) \right|
\qquad \left( a.e.\ x \in Q_{2} \right).
\end{equation*}
\end{lem}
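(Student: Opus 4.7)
The plan is to exploit two features of the setup: (i) the coefficients $A_{ij}^{\alpha\beta}$ and the datum $F_1^i$ depend only on $x^1$, so tangential derivatives $D_\alpha$ with $\alpha \in \{2,\ldots,n\}$ commute freely with the coefficients; and (ii) the equation \eqref{GELE6100} can be rewritten so that $D_1\!\left[A_{ij}^{1\beta} D_\beta w^j - F_1^i\right]$ is expressed purely in terms of tangential second derivatives of $w$.

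First I would establish that the mixed second derivatives $D_\alpha D_\beta w^j$ with $\alpha \in \{2,\ldots,n\}$ exist in $L^2_{\mathrm{loc}}(Q_2)$. Since $A_{ij}^{\alpha\beta}(x^1)$ and $F_1^i(x^1)$ are invariant under translations in the $x^\alpha$-direction for $\alpha \geq 2$, the difference quotient $\Delta_h^\alpha w$ solves the same divergence-form equation, and the standard energy estimate (as in Lemma~\ref{GELES2000}) delivers uniform bounds $\|D\Delta_h^\alpha w\|_{L^2(Q_\rho)} \leq c\|Dw\|_{L^2(Q_{2\rho})}$ on interior cubes. Passing to the limit $h \to 0$ yields $D_\alpha D_\beta w^j \in L^2_{\mathrm{loc}}(Q_2)$ for $\alpha \geq 2$ and all $\beta$, i.e., $DD_{x'}w \in L^2_{\mathrm{loc}}(Q_2)$.

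Next I would compute the weak derivatives of $W_1^i = -A_{ij}^{1\beta}(x^1)\,D_\beta w^j + F_1^i(x^1)$ component-by-component. For $\alpha \geq 2$, since $A_{ij}^{1\beta}$ and $F_1^i$ depend only on $x^1$, the product rule yields distributionally
\[
D_\alpha W_1^i \;=\; -\,A_{ij}^{1\beta}(x^1)\, D_\alpha D_\beta w^j,
\]
and the bound $|A_{ij}^{1\beta}| \leq \Lambda$ gives $|D_\alpha W_1| \leq c\,|DD_{x'}w|$ a.e. For the remaining direction $\alpha = 1$, I would separate the $\alpha=1$ term on the left-hand side of \eqref{GELE6100} and pull the tangential derivative through $A_{ij}^{\alpha\beta}(x^1)$ to obtain
\[
D_1\!\left[A_{ij}^{1\beta}(x^1) D_\beta w^j - F_1^i(x^1)\right] \;=\; -\sum_{\alpha \geq 2} A_{ij}^{\alpha\beta}(x^1)\, D_\alpha D_\beta w^j,
\]
where the right-hand side is a genuine $L^2_{\mathrm{loc}}$ function by the first step. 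Hence $D_1 W_1^i$ exists weakly and satisfies $|D_1 W_1| \leq c\,|DD_{x'}w|$ a.e., which combined with the previous display completes the proof.

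The only delicate point is step one, namely justifying that the tangential second derivatives of $w$ exist in $L^2_{\mathrm{loc}}$; this is precisely the payoff of coefficients depending on a single variable and is handled by the tangential difference-quotient method. Once this regularity is in hand, the remainder is just the product rule combined with a rearrangement of the PDE, and the pointwise bound $|DW_1| \leq c\,|DD_{x'}w|$ follows directly.
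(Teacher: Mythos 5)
Your proposal is correct and follows essentially the same route as the paper: tangential differentiability of $Dw$ from the $x'$-translation invariance of $A_{ij}^{\alpha\beta}(x^1)$ and $F_1^i(x^1)$, the product rule for the tangential derivatives of $W_1$, and the rearrangement of the PDE to express $D_1\bigl[A_{ij}^{1\beta}D_\beta w^j - F_1^i\bigr]$ in terms of $DD_{x'}w$. The only difference is that you spell out the difference-quotient justification of $DD_{x'}w \in L^2_{\mathrm{loc}}$, which the paper asserts more briefly.
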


\begin{proof}
Since $F^{i}_1$ is a function of $x^1$-variable, $D_{\beta}w^{j}$ are weakly differentiable in $Q_2$ for the $x'$-variables. So by the definition of weak solution and \eqref{GELE6100}, $A_{ij}^{1 \beta}(x^{1}) D_{\beta}w^{j}- F_{1}^{i}(x^{1}) $ are weakly differentiable in the $x^1$-variable. Thus
\begin{equation*}\begin{aligned}\label{} 
D_{1}W_{1}^{i} = D_{1}\left[ A_{ij}^{1 \beta}(x^{1}) D_{\beta}w^{j}(x) - F_{1}^{i}(x^{1}) \right] 
& = - \sum_{ 2 \leq \alpha \leq n} D_{\alpha}\left[ A_{ij}^{1 \beta}(x^{1}) D_{\beta}w^{j}(x) \right]  \\
& = - \sum_{ 2 \leq \alpha \leq n} A_{ij}^{1 \beta}(x^{1}) D_{\alpha}D_{\beta}w^{j}(x)
\end{aligned}\end{equation*}
for a.e. $x \in Q_2$ and $1 \leq i \leq N$. So we find that
\begin{equation*}\begin{aligned}\label{} 
\left| D_{1}W_{1}^{i} \right|
& = \left| D\left[ A_{ij}^{1 \beta}(x^{1}) D_{\beta}w^{j}(x) - F_{1}^{i}(x^{1}) \right] \right| \\
& \leq \left| D_{1}\left[ A_{ij}^{1 \beta}(x^{1}) D_{\beta}w^{j}(x) - F_{1}^{i}(x^{1}) \right]  \right|
+ \left| D_{x'}\left[ A_{ij}^{1 \beta}(x^{1}) D_{\beta}w^{j}(x) - F_{1}^{i}(x^{1}) \right]  \right| \\
& \leq c |DD_{x'}w(x)|,
\end{aligned}\end{equation*}
for a.e. $x \in Q_2$ and $1 \leq i \leq N$.
\end{proof}

With Lemma \ref{GELES6000}, we obtain the following excess decay estimate.

\begin{lem}\label{GELES7000}
Under the assumption \eqref{GELE1100}, let $w$ be a weak solution of \eqref{GELE6100}. Then for $W$ in \eqref{GELE7100}, we have that
\begin{equation*}\label{}
\mint_{Q_{\rho}} \big| W - (W)_{Q_{\rho}} \big|^{2} \, dx
\leq c  \rho  
\mint_{Q_{2}} \big| W - (W)_{Q_{2}} \big|^{2} \, dx
\qquad \left( \rho \in (0,2] \right).
\end{equation*}
\end{lem}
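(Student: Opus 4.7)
The strategy is to prove a pointwise H\"older estimate
\begin{equation*}
|W(x) - W(y)|^2 \leq c|x-y|\int_{Q_2}|W - (W)_{Q_2}|^2\,dx \qquad (x, y \in Q_1)
\end{equation*}
by adapting the proof of Lemma \ref{GELES5000}, and then to pass from the pointwise estimate to the excess decay via a Poincar\'e-type inequality.

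The two key ingredients are: (i) since $A_{ij}^{\alpha\beta}$ and $F_1^i$ depend only on $x^1$, formally differentiating \eqref{GELE6100} in $x^\alpha$ for $\alpha \geq 2$ shows, via the standard difference-quotient argument, that $v := D_\alpha w$ is itself a weak solution of the homogeneous system \eqref{GELE1300}, so that Lemma \ref{GELES3000} applied to $v$ gives $\int_{Q_1}|D_{x'}^p Dv|^2\,dx \leq c(p)\int_{Q_2}|W - (W)_{Q_2}|^2\,dx$; and (ii) by the explicit identities from the proof of Lemma \ref{GELES6000} --- namely $D_\alpha W_1^i = -A_{ij}^{1\beta}(x^1)D_\alpha D_\beta w^j$ for $\alpha \geq 2$ and $D_1 W_1^i = \sum_{\gamma \geq 2} A_{ij}^{\gamma\beta}(x^1) D_\gamma D_\beta w^j$, together with the trivial $W_\alpha = D_\alpha w$ for $\alpha \geq 2$ --- one obtains the pointwise bound $|D_{x'}^p DW| \leq c\,|D_{x'}^p Dv|$ a.e. Combining (i) and (ii) yields
\begin{equation*}
\int_{Q_1}|D_{x'}^p DW|^2\,dx \leq c(p)\int_{Q_2}|W - (W)_{Q_2}|^2\,dx
\end{equation*}
for every integer $p \geq 0$.

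With this higher-derivative bound in hand, I would run the Sobolev embedding argument from the proof of Lemma \ref{GELES5000} verbatim with $h$ replaced by $W$ --- first embedding in the $x^1$-variable to extract the factor $|x^1 - y^1|^{1/2}$, then embedding in the $x'$-variables using the display above --- to arrive at the pointwise H\"older estimate stated at the beginning. The excess decay for $\rho \in (0,1]$ then follows from
\begin{equation*}
\mint_{Q_\rho}|W - (W)_{Q_\rho}|^2\,dx \leq \mint_{Q_\rho}\mint_{Q_\rho}|W(x) - W(y)|^2\,dx\,dy,
\end{equation*}
together with $|x-y| \leq c\rho$ on $Q_\rho \times Q_\rho$; for $\rho \in (1, 2]$ the desired inequality is immediate from the crude bound $\mint_{Q_\rho}|W - (W)_{Q_\rho}|^2\,dx \leq c\mint_{Q_2}|W - (W)_{Q_2}|^2\,dx$, since $\rho$ is comparable to $1$.

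The main difficulty is that $W_1$ is not itself a solution of any nice equation, so Lemma \ref{GELES5000} cannot be invoked for it directly. This is exactly where Lemma \ref{GELES6000} plays its role: it expresses the derivatives of $W_1$ entirely in terms of second derivatives of $w$, which in turn are first derivatives of the homogeneous solution $v = D_{x'}w$. In this way the higher interior regularity of $v$ granted by Lemma \ref{GELES3000} transfers to $W$, and the Sobolev embedding proof of Lemma \ref{GELES5000} goes through without modification.
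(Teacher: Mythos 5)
Your argument is correct, and every ingredient you invoke does combine as claimed, but the route differs from the paper's in how the component $W_1$ is handled. The paper never proves a pointwise estimate for $W_1$: it applies Lemma \ref{GELES5000} only to the components $D_m w$ ($m \geq 2$), and then controls the oscillation of $W_1$ at each scale by Poincar\'e's inequality on $Q_\rho$ combined with the first-order bound $|DW_1| \leq c\,|DD_{x'}w|$ of Lemma \ref{GELES6000}, followed by the Caccioppoli estimate of Lemma \ref{GELES2000} and the decay already obtained for $D_{x'}w$; schematically, $\mint_{Q_{\rho}} | W_{1} - (W_{1})_{Q_{\rho}} |^{2}\,dx \leq c\rho^{2} \mint_{Q_{\rho}} |DD_{x'}w|^{2}\,dx \leq c \mint_{Q_{2\rho}} | D_{x'}w - (D_{x'}w)_{Q_{2\rho}} |^{2}\,dx \leq c\rho \int_{Q_{2}} | D_{x'}w - (D_{x'}w)_{Q_{2}} |^{2}\,dx$. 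You instead upgrade Lemma \ref{GELES6000} to all orders, $|D_{x'}^{p}DW| \leq c\,|D_{x'}^{p}DD_{x'}w|$, feed this into Lemma \ref{GELES3000} applied to the homogeneous solutions $D_\alpha w$ ($\alpha \geq 2$, so $v$ should really be all of $D_{x'}w$, not a single $D_\alpha w$), and rerun the Sobolev-embedding proof of Lemma \ref{GELES5000} on $W$ itself; one small point to check there is that the second half of that embedding argument also uses the undifferentiated terms $D_{x'}^{q}W$ for $1 \leq q \leq p$, which are likewise bounded by $c\,|D_{x'}^{q-1}DD_{x'}w|$, so nothing breaks. Your version is somewhat heavier but delivers a strictly stronger intermediate statement --- a pointwise $C^{0,1/2}$ bound for all of $W$ on $Q_1$ --- from which the $L^2$-excess decay follows by Jensen exactly as you describe; the paper's version is shorter because it only ever needs the $L^2$ statement.
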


\begin{proof}
By differentiating \eqref{GELE6100} with respect to $x^{m}$-variable $(m \in \{ 2, \cdots, n \})$,
\begin{equation}\label{GELE7400}
D_{ \alpha }\left[ A_{ij}^{\alpha \beta} (x^{1}) D_{\beta} \left( D_{m}w^{j} \right) \right] = 0 \quad \text{ in } \quad Q_{2}.
\end{equation}
So by applying Lemma \ref{GELES5000} to \eqref{GELE7400} and $D_{m}w$ instead of \eqref{GELE1300} and $h$ respectively,
\begin{equation}\label{GELE7500}
\mint_{Q_{\rho}} \big| D_{m}w - (D_{m}w)_{Q_{\rho}} \big|^{2} \, dx 
\leq c \rho \int_{Q_{2}} \big| D_{m}w - (D_{m}w)_{Q_{2}} \big|^{2} \, dx,
\end{equation}
for any $\rho \in (0,1]$. Also by applying Lemma \ref{GELES2000} to \eqref{GELE7400} and $D_{m}w$ instead of \eqref{GELE1300} and $h$ respectively,
\begin{equation}\label{GELE7600} 
\int_{Q_{\rho}}  | DD_{m}w |^{2}  \, dx
\leq \frac{ c }{ \rho^{2} } \int_{Q_{2\rho}}  
\big|  D_{m}w - (D_{m}w)_{Q_{2\rho}} \big|^{2}  \, dx,
\end{equation}
for any $\rho \in (0,1]$. Since $m \in \{ 2, \cdots, n \}$ was arbitrary chosen, we find from \eqref{GELE7500} and \eqref{GELE7600} that
\begin{equation}\label{GELE7700}
\mint_{Q_{\rho}} \big| D_{x'}w - (D_{x'}w)_{Q_{\rho}} \big|^{2} \, dx 
\leq c \rho
 \int_{Q_{2}} \big| D_{x'}w - (D_{x'}w)_{Q_{2}} \big|^{2} \, dx,
\end{equation}
and
\begin{equation}\label{GELE7800} 
\int_{Q_{\rho}}  | DD_{x'}w |^{2}  \, dx
\leq \frac{ c }{ \rho^{2} } \int_{Q_{2\rho}}  
\left|  D_{x'}w - (D_{x'}w)_{Q_{2\rho}} \right|^{2}  \, dx,
\end{equation}
for any $\rho \in (0,1]$.

With \eqref{GELE7100}, by Poincar\'{e}'s inequality and Lemma \ref{GELES6000},
\begin{equation*}\begin{aligned}\label{} 
\mint_{Q_{\rho}} \left| \left( W_{1} \right)  - \left( W_{1} \right)_{Q_{\rho}} \right|^{2} \, dx 
\leq c \rho^{2}  \mint_{Q_{\rho}} \left| D W_{1} \right|^{2}  dx 
\leq  c \rho^{2}  \mint_{Q_{\rho}} \big| DD_{x'}w \big|^{2} dx,
\end{aligned}\end{equation*}
for any $\rho \in (0,1/2]$. By \eqref{GELE7700} and \eqref{GELE7800},
\begin{equation*}% \label{} 
\rho^{2} \mint_{Q_{\rho}} \big| DD_{x'}w \big|^{2}  dx
\leq  \mint_{Q_{2\rho}} \big| D_{x'}w - (D_{x'}w)_{Q_{2\rho}} \big|^{2} \, dx
\leq c\rho \int_{Q_{2}} \big| D_{x'}w - (D_{x'}w)_{Q_{2}} \big|^{2} \, dx,
\end{equation*}
for any $\rho \in (0,1/2]$. Thus
\begin{equation*}\begin{aligned}\label{GELE8100} 
& \mint_{Q_{\rho}} \left| W_{1}  - \left( W_{1} \right)_{Q_{\rho}} \right|^{2} \, dx \\
& \quad \leq c\rho \int_{Q_{2}} \big| D_{x'}w - (D_{x'}w)_{Q_{2}} \big|^{2} \, dx,
\end{aligned}\end{equation*}
for any $\rho \in (0,1/2]$. So with \eqref{GELE7700}, we find from \eqref{GELE7100} and H\"{o}lder's inequality to find that
\begin{equation}\label{GELE8200} 
\mint_{Q_{\rho}} \big| W - (W)_{Q_{\rho}} \big|^{2} \, dx \leq c \rho  \mint_{Q_{2}} \big| W - (W)_{Q_{2}} \big|^{2} \, dx,
\end{equation}
for any $\rho \in (0,1/2]$. If $\rho \in (1/2,2]$ then one can check that
\begin{equation*}\begin{aligned}% \label{} 
\mint_{Q_{\rho}} \big| W - (W)_{Q_{\rho}} \big|^{2} \, dx 
& \leq 2 \mint_{Q_{\rho}} \big| W - (W)_{Q_{2}} \big|^{2} + \big| (W)_{Q_{2}} - (W)_{Q_{\rho}} \big|^{2} \, dx \\
& \leq c \mint_{Q_{2}} \big| W - (W)_{Q_{2}} \big|^{2} \, dx,
\end{aligned}\end{equation*}
which implies that
\begin{equation}\label{GELE8400} 
\mint_{Q_{\rho}} \big| W - (W)_{Q_{\rho}} \big|^{2} \, dx \leq c \rho  \mint_{Q_{2}} \big| W - (W)_{Q_{2}} \big|^{2} \, dx,
\end{equation}
for any $\rho \in (1/2,2]$. So we discover from \eqref{GELE8200} and \eqref{GELE8400} that the lemma holds.
\end{proof}

\section{Comparison of H\"{o}lder norm}

In Section \ref{GELE}, we derived the excess decay estimate with respect to the functional $W$ not the gradient of the weak solution $Dw$. With this estimate, we will obtain the excess decay estimate with respect to the functional $U$ in \eqref{HOD630} (which corresponds to $W$) not $Du$. So to obtain piecewise H\"{o}lder continuity of $Du$, we compare H\"{o}lder norm of $U$ and H\"{o}lder norm of $Du$ in this section.

\sskip

In the later sections we will consider composite cubes. So unlike $W$ in \eqref{GELE7100}, $U$ in \eqref{HOD630} depends on $\pi'$ in \eqref{GS250} which was naturally induced by our geometry. In fact, we will use Lemma \ref{PHNS400} in the later paper \cite{KY1}. So to minimize the condition of the results, we consider only one point for Lemma \ref{HODS600} and two points for Lemma \ref{HODS700}.

\sskip

We compare $U$ with $Du$ and $F$ in the following lemma. Later, we will take $\zeta_{\beta}^{j} = D_{\beta}u^{j}$.

\begin{lem}\label{HODS600}
Let $\pi = (-1, \pi_{2}, \cdots, \pi_{n}) \in \br^{n}$ and $\pi_{1} = -1$. For the constants 
\begin{equation}\label{}
A_{ij}^{\alpha \beta}, F_{\alpha}^{i}, \zeta_{\alpha}^{i} 
\qquad (1 \leq i,j \leq N, ~ 1 \leq \alpha, \beta \leq n),
\end{equation}
satisfying \eqref{ell1} and \eqref{ell2}, we define $U \in \br^{Nn}$ as 
\begin{equation}\label{HOD620}
U= \left( 
\begin{array}{ccc}
U_{1}^{1} & \cdots & U_{n}^{1} \\
\vdots & \ddots & \vdots \\
U_{1}^{N} & \cdots & U_{n}^{N}.
\end{array}\right)
\end{equation}
where
\begin{equation}\label{HOD630} 
U_{1}^{i}
= \sum_{1 \leq \alpha \leq n } \pi_{\alpha} \left[ \left( \sum_{1 \leq j \leq N} \sum_{1 \leq \beta \leq n } A_{ij}^{\alpha \beta} \zeta_{\beta}^{j}  \right)- F_{\alpha}^{i} \right]
\qqquad ( i =1, \cdots, N )
\end{equation}
and
\begin{equation}\label{HOD640} 
U_{\beta}^{i} = \zeta_{\beta}^{i} + \pi_{\beta} \, \zeta_{1}^{i}
\qqquad (i =1, \cdots, N, ~ \beta = 2, \cdots, n ).
\end{equation}
Then we have that
\begin{equation}\label{} 
|\zeta| \leq c \Big[ |U|  + |F| \Big] 
\end{equation}
where $c$ is a constants only depending on $n,N,\lambda$ and $\Lambda$.
\end{lem}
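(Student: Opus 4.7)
The plan is to separate $\zeta_1$ from the transverse components $\zeta_\beta$ ($\beta \geq 2$) and exploit the quadratic form naturally built into $U_1$. From \eqref{HOD640} one has immediately $\zeta_\beta^i = U_\beta^i - \pi_\beta \zeta_1^i$ for $\beta \geq 2$, so once both $|\zeta_1|$ and $|\pi||\zeta_1|$ are controlled by $|U| + |F|$, the bound on $|\zeta|$ follows. Throughout, note $|\pi|^2 = 1 + |\pi'|^2 \geq 1$ since $\pi_1 = -1$. To control $\zeta_1$ I would substitute the above relation into \eqref{HOD630} and collect the coefficient of $\zeta_1^j$. Using $\pi_1 = -1$, the surviving $\beta = 1$ column fuses with the $\beta \geq 2$ sum, and the coefficient becomes exactly $-M_{ij}$, where
\[
M_{ij} := \sum_{1 \leq \alpha, \beta \leq n} \pi_\alpha A_{ij}^{\alpha\beta} \pi_\beta.
\]
The resulting identity reads
\[
M_{ij}\zeta_1^j = -U_1^i + \sum_\alpha \sum_{\beta \geq 2} \pi_\alpha A_{ij}^{\alpha\beta} U_\beta^j - \sum_\alpha \pi_\alpha F_\alpha^i,
\]
and the ellipticity \eqref{ell1}, applied to the rank-one tensor $\xi_\alpha^i = \pi_\alpha \zeta_1^i$, gives $M_{ij}\zeta_1^i\zeta_1^j \geq \lambda |\pi|^2 |\zeta_1|^2$.

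Testing the identity against $\zeta_1^i$, bounding the $U$- and $F$-terms by \eqref{ell2}, and dividing through by $|\zeta_1|$ produces
\[
\lambda |\pi|^2 |\zeta_1| \leq |U_1| + c|\pi|\bigl(|U| + |F|\bigr)
\]
for some $c = c(n, N, \Lambda)$. Dividing by $|\pi|^2$ and using $|\pi| \geq 1$ gives $|\zeta_1| \leq c(|U| + |F|)$; dividing instead by $|\pi|$ gives $|\pi||\zeta_1| \leq c(|U| + |F|)$. Combined with $|\zeta_\beta| \leq |U_\beta| + |\pi||\zeta_1|$ for $\beta \geq 2$, this yields the desired bound $|\zeta| \leq c(|U| + |F|)$.

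No substantive obstacle arises, but one point is worth flagging: the intermediate inequalities genuinely carry factors of $|\pi|$, and a priori the claim's $\pi$-independent constant seems too strong. The reason it nevertheless holds is that the rank-one test $\pi \otimes \zeta_1$ makes the ellipticity produce an extra $|\pi|^2$, which exactly absorbs the $|\pi|$ factors on the right-hand side.
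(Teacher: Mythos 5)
Your proposal is correct and follows essentially the same route as the paper: substitute $\zeta_\beta^j = U_\beta^j - \pi_\beta\zeta_1^j$ into \eqref{HOD630} using $\pi_1=-1$, isolate the quadratic form $\sum_{\alpha,\beta}A_{ij}^{\alpha\beta}\pi_\alpha\pi_\beta\zeta_1^j$, test against $\zeta_1^i$, and apply \eqref{ell1} to the rank-one tensor $\pi_\alpha\zeta_1^i$ so that the factor $|\pi|^2\geq 1$ absorbs the $|\pi|$ factors on the right. This is exactly the paper's argument, including the final step $|\zeta_\beta|\leq|U|+|\pi||\zeta_1|$.
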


\begin{proof}
Since $\pi_{1} = -1$, it  follows from \eqref{HOD630} and \eqref{HOD640} that 
\begin{equation*}\label{} 
U_{1}^{i}
= \sum_{ 1 \leq  \alpha \leq n }  \pi_{ \alpha } \left[ \left( \sum_{1 \leq j \leq N} \sum_{2 \leq \beta \leq n }  A_{ij}^{\alpha \beta}  U_{\beta}^{j} \right) -  F_{\alpha}^{i} \right] - \sum_{1 \leq \alpha \leq n } \pi_{\alpha} \left[  \sum_{1 \leq j \leq N} \sum_{1 \leq \beta \leq n }  A_{ij}^{\alpha \beta} \pi_{\beta} \zeta_{1}^{j} \right],
\end{equation*}
for any $i  = 1, \cdots, N$, which implies that
\begin{equation}\label{HOD660} 
\sum_{1 \leq j \leq N}  \sum_{1 \leq \alpha,\beta \leq n  } A_{ij}^{\alpha \beta} \pi_{\alpha} \pi_{\beta}  \zeta_{1}^{j} 
= \sum_{ 1 \leq \alpha \leq n } \pi_{ \alpha } \left[ \left( \sum_{1 \leq j \leq N} \sum_{2 \leq \beta \leq n }  A_{ij}^{\alpha \beta}  U_{\beta}^{j} \right) - F_{\alpha}^{i} \right] - U_{1}^{i},
\end{equation}
for any $i  = 1, \cdots, N$.  Thus
\begin{equation*}\begin{aligned}\label{}
& \sum_{1 \leq i,J \leq N}   \sum_{1 \leq \alpha,\beta \leq n  }  A_{ij}^{\alpha \beta} \left(\pi_{\alpha} \zeta_{1}^{i}\right) \big(\pi_{\beta}   \zeta_{1}^{j}\big)  \\
& \quad = \sum_{1 \leq i \leq N} \zeta_{1}^{i} \left( \sum_{ 1 \leq \alpha \leq n } \pi_{ \alpha } \left[ \left( \sum_{1 \leq j \leq N} \sum_{2 \leq \beta \leq n }  A_{ij}^{\alpha \beta}  U_{\beta}^{j} \right) - F_{\alpha}^{i} \right] - U_{1}^{i} \right).
\end{aligned}\end{equation*}
So from \eqref{ell1}, \eqref{ell2} and that $ \pi_{1} = -1$, we obtain that
\begin{equation*}\label{}
\lambda |\pi|^{2} |\zeta_1|^{2} 
\leq |\pi| |\zeta_1|  \left( \sum_{ 1 \leq \alpha \leq n }  \left[ \left( \sum_{1 \leq j \leq N} \sum_{2 \leq \beta \leq n }  \left| A_{ij}^{\alpha \beta}  U_{\beta}^{j} \right| \right) + \left| F_{\alpha}^{i} \right| \right] + |U_{1}| \right)
\end{equation*}
whence
\begin{equation}\label{HOD670}
|\zeta_1| \leq |\pi||\zeta_1| \leq c \Big[ |U|  + |F| \Big].
\end{equation}
Moreover, from \eqref{HOD640} and \eqref{HOD670}, we discover that
\begin{equation*}
|\zeta_\beta|\leq |U|+|\pi||\zeta_1| \leq c \Big[ |U|  + |F| \Big]
\end{equation*} 
for any $\beta = 2,\cdots,n$. This and \eqref{HOD670} complete the proof.
\end{proof}

To obtain H\"{o}lder semi-norm $Du$ with H\"{o}lder semi-norm $U$ and $F$, we consider the two points set as a domain in the following lemma.

\begin{lem}\label{HODS700}
For fixed $x,y \in \br^{n}$ $( x \not  =y)$, let $\pi = (-1, \pi_{2}, \cdots, \pi_{n}) : \{ x, y \} \to \br^{n}$ and $\pi_{1} = -1$. For the functions
\begin{equation*}\label{}
A_{ij}^{\alpha \beta} : \{ x, y \}  \to \br^{Nn} \times \br^{Nn}, 
\qquad
F_{\alpha}^{i} : \{ x, y \}  \to \br^{Nn}
\qquad \text{and} \qquad
\zeta_{\alpha}^{i}  : \{ x, y \}  \to \br^{Nn},
\end{equation*}
satisfying \eqref{ell1} and \eqref{ell2} $(1 \leq \alpha, \beta \leq n, ~ 1 \leq i,j \leq N)$, we define $U : \{ x, y \}  \to \br^{Nn}$ as 
\begin{equation}\label{HOD720}
U= \left( 
\begin{array}{ccc}
U_{1}^{1} & \cdots & U_{n}^{1} \\
\vdots & \ddots & \vdots \\
U_{1}^{N} & \cdots & U_{n}^{N}.
\end{array}\right)
\end{equation}
where
\begin{equation}\label{HOD730} 
U_{1}^{i}
= \sum_{1 \leq \alpha \leq n } \pi_{\alpha} \left[ \left( \sum_{1 \leq j \leq N} \sum_{1 \leq \beta \leq n } A_{ij}^{\alpha \beta} \zeta_{\beta}^{j} \right) - F_{\alpha}^{i} \right]
\qqquad 
( i =1, \cdots, N )
\end{equation}
and
\begin{equation}\label{HOD740} 
U_{\beta}^{i} = \zeta_{\beta}^{i} + \pi_{\beta} \, \zeta_{\beta}^{i}
\qqquad (i =1, \cdots, N, ~ \beta = 2, \cdots, n ).
\end{equation}
Then we have that
\begin{equation*}\begin{aligned}\label{}
|\zeta(x) - \zeta(y)| \
& \leq c \Big[ |U(y) - U(x)| + |F(y) - F(x)| \Big] \\
& \quad +  c  \Big[ |U(x)| + |F(x)| \Big] \left[  \left| \pi(x) - \pi(y)\right| +  \left| \pi(x) - \pi(y)\right|^{2}  \right] \\
& \quad +  c  \Big[ |U(x)| + |F(x)| \Big]  \sum_{1 \leq i,j \leq N}   \sum_{1 \leq \alpha, \beta \leq n  }  \left| A_{ij}^{\alpha \beta}(y)  - A_{ij}^{\alpha \beta}(x) \right|,
\end{aligned}\end{equation*}
where the constant $c$ only depends on $n, \, N, \, \lambda$ and $\Lambda$. 
\end{lem}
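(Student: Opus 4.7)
The plan is to mirror the proof of Lemma \ref{HODS600} at both points $x$ and $y$ simultaneously, subtract the resulting identities, and apply ellipticity to the difference. First, the algebra leading to \eqref{HOD660} uses only \eqref{HOD630}, \eqref{HOD640} and $\pi_{1}=-1$, all of which hold at each of $x,y$. So setting
\begin{equation*}
M^{ij}(p) := \sum_{1 \leq \alpha, \beta \leq n} A_{ij}^{\alpha \beta}(p)\,\pi_{\alpha}(p)\,\pi_{\beta}(p)
\end{equation*}
and
\begin{equation*}
R^{i}(p) := -\,U_{1}^{i}(p) + \sum_{1 \leq \alpha \leq n}\pi_{\alpha}(p)\left[ \sum_{1 \leq j \leq N}\sum_{2 \leq \beta \leq n} A_{ij}^{\alpha\beta}(p)\,U_{\beta}^{j}(p) - F_{\alpha}^{i}(p)\right],
\end{equation*}
one obtains $M^{ij}(p)\,\zeta_{1}^{j}(p) = R^{i}(p)$ at $p = x$ and $p = y$, and subtracting gives
\begin{equation*}
M^{ij}(y)\bigl[\zeta_{1}^{j}(x) - \zeta_{1}^{j}(y)\bigr] = \bigl[R^{i}(x) - R^{i}(y)\bigr] + \bigl[M^{ij}(y) - M^{ij}(x)\bigr]\zeta_{1}^{j}(x).
\end{equation*}

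Next I would pair this identity with $\zeta_{1}^{i}(x) - \zeta_{1}^{i}(y)$ and use the ellipticity bound $M^{ij}(y)v^{i}v^{j} \geq \lambda\,|\pi(y)|^{2}|v|^{2} \geq \lambda|v|^{2}$ (which holds because $\pi_{1}(y) = -1$) to deduce
\begin{equation*}
|\zeta_{1}(x) - \zeta_{1}(y)| \leq c\bigl[\,|R(x) - R(y)| + |M(y) - M(x)|\,|\zeta_{1}(x)|\,\bigr].
\end{equation*}
Each of the two differences is then expanded by standard telescoping. For $R(x)-R(y)$, every product of the form $\pi_{\alpha}A_{ij}^{\alpha\beta}U_{\beta}^{j}$ (and its $F$-analogue) is split so as to isolate $|U(y)-U(x)|$, $|F(y)-F(x)|$, $|\pi(y)-\pi(x)|\cdot|U(x)|$ and $|A_{ij}^{\alpha\beta}(y)-A_{ij}^{\alpha\beta}(x)|\cdot|U(x)|$, with the remaining factors controlled by \eqref{ell2} and the uniform bound on $\pi$. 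For $M(y)-M(x)$, I decompose
\begin{equation*}
A_{ij}^{\alpha\beta}(y)\pi_{\alpha}(y)\pi_{\beta}(y) - A_{ij}^{\alpha\beta}(x)\pi_{\alpha}(x)\pi_{\beta}(x) = [A_{ij}^{\alpha\beta}(y)-A_{ij}^{\alpha\beta}(x)]\pi_{\alpha}(y)\pi_{\beta}(y) + A_{ij}^{\alpha\beta}(x)\bigl[\pi_{\alpha}(y)\pi_{\beta}(y) - \pi_{\alpha}(x)\pi_{\beta}(x)\bigr],
\end{equation*}
and then use $\pi_{\alpha}(y)\pi_{\beta}(y) - \pi_{\alpha}(x)\pi_{\beta}(x) = [\pi_{\alpha}(y)-\pi_{\alpha}(x)][\pi_{\beta}(y)-\pi_{\beta}(x)] + [\pi_{\alpha}(y)-\pi_{\alpha}(x)]\pi_{\beta}(x) + \pi_{\alpha}(x)[\pi_{\beta}(y)-\pi_{\beta}(x)]$; the cross term is exactly what produces the $|\pi(x)-\pi(y)|^{2}$ contribution appearing in the statement. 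Combined with Lemma \ref{HODS600} (which gives $|\zeta_{1}(x)| \leq c[|U(x)| + |F(x)|]$), this proves the estimate for the $\zeta_{1}$-component.

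For the components $\zeta_{\beta}$ with $\beta \geq 2$, I invert \eqref{HOD740} (read consistently with \eqref{HOD640} as $\zeta_{\beta}^{i} = U_{\beta}^{i} - \pi_{\beta}\zeta_{1}^{i}$) and write
\begin{equation*}
\zeta_{\beta}(x) - \zeta_{\beta}(y) = \bigl[U_{\beta}(x) - U_{\beta}(y)\bigr] - \pi_{\beta}(y)\bigl[\zeta_{1}(x) - \zeta_{1}(y)\bigr] - \bigl[\pi_{\beta}(x) - \pi_{\beta}(y)\bigr]\zeta_{1}(x),
\end{equation*}
and feed in the bound just established for $|\zeta_{1}(x) - \zeta_{1}(y)|$, together with Lemma \ref{HODS600}. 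The main difficulty is purely bookkeeping: every telescoped increment must be absorbed into exactly one of the three groupings on the right-hand side of the conclusion, without any stray factor of $|\pi|$ (which is bounded but not small) inflating the coefficient of a difference that is meant to be ``linear'' in $|\pi(x)-\pi(y)|$ or $|A(x)-A(y)|$.
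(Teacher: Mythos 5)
Your proposal is correct and follows essentially the same route as the paper: subtract the identity \eqref{HOD780} at the two points, use ellipticity of $A_{ij}^{\alpha\beta}(y)\pi_{\alpha}(y)\pi_{\beta}(y)$ together with $|\pi(y)|\geq 1$ to control $|\zeta_{1}(x)-\zeta_{1}(y)|$, telescope the remaining differences (with the cross term yielding the $|\pi(x)-\pi(y)|^{2}$ contribution), invoke Lemma \ref{HODS600} for $|\zeta_{1}(x)|$, and finish the $\beta\geq 2$ components via $\zeta_{\beta}^{i}=U_{\beta}^{i}-\pi_{\beta}\zeta_{1}^{i}$. You also correctly read \eqref{HOD740} as the evident typo for \eqref{HOD640}, which is how the paper itself uses it.
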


\begin{proof}
In this proof, $c$ denotes a constant only depending on $n, \, N, \, \lambda$ and $\Lambda$.

Since $\pi_{1} = -1$, it  follows from \eqref{HOD630} and \eqref{HOD640} that 
\begin{equation*}\label{} 
U_{1}^{i}
= \sum_{ 1 \leq  \alpha \leq n }  \pi_{ \alpha } \left[ \left( \sum_{1 \leq j \leq N} \sum_{2 \leq \beta \leq n }  A_{ij}^{\alpha \beta}  U_{\beta}^{j} \right) -  F_{\alpha}^{i} \right] - \sum_{1 \leq \alpha \leq n } \pi_{\alpha} \left[   \sum_{1 \leq j \leq N} \sum_{1 \leq \beta \leq n }  A_{ij}^{\alpha \beta}  \pi_{\beta} \zeta_{1}^{j} \right],
\end{equation*}
for any $i  = 1, \cdots, N$, which implies that
\begin{equation}\begin{aligned}\label{}
\label{HOD780} 
& \sum_{1 \leq j \leq N}  \sum_{1 \leq \alpha,\beta \leq n  } A_{ij}^{\alpha \beta} \pi_{\alpha} \pi_{\beta}  \zeta_{1}^{j} \\
& \qquad = \sum_{ 1 \leq \alpha \leq n } \pi_{ \alpha } \left[ \left( \sum_{1 \leq j \leq N} \sum_{2 \leq \beta \leq n }  A_{ij}^{\alpha \beta}  U_{\beta}^{j} \right) - F_{\alpha}^{i} \right] - U_{1}^{i},
\end{aligned}\end{equation}
for any $i  = 1, \cdots, N$. One can directly check that
\begin{equation}\begin{aligned}\label{HOD790}
& \sum_{1 \leq j \leq N}  \left[ \sum_{1 \leq \alpha,\beta \leq n  } A_{ij}^{\alpha \beta}(y) \pi_{\alpha}(y) \pi_{\beta}(y) \right] \left[  \zeta_{1}^{j} (y) - \zeta_{1}^{j}(x) \right] \\
& \quad =  \sum_{1 \leq j \leq N}  \left[ \sum_{1 \leq \alpha,\beta \leq n  } A_{ij}^{\alpha \beta}(y) \pi_{\alpha}(y) \pi_{\beta}(y) \right]  \zeta_{1}^{j} (y) \\
& \qquad +   \sum_{1 \leq j \leq N} \sum_{1 \leq \alpha,\beta \leq n  } \left[  A_{ij}^{\alpha \beta}(x)  \pi_{\alpha}(x) \pi_{\beta}(x)- A_{ij}^{\alpha \beta}(y) \pi_{\alpha}(y) \pi_{\beta}(y)  \right] \zeta_{1}^{j}(x) \\
& \qquad -  \sum_{1 \leq j \leq N} \left[ \sum_{1 \leq \alpha,\beta \leq n  }  A_{ij}^{\alpha \beta}(x) \pi_{\alpha}(x) \pi_{\beta}(x) \right] \zeta_{1}^{j} (x),
\end{aligned}\end{equation}
for any $i  = 1, \cdots, N$. With \eqref{ell1}, we use the left-hand side of \eqref{HOD790} as follows : 
\begin{equation*}\begin{aligned}\label{}
& \lambda |\zeta_1(x) - \zeta_1(y)|^{2} |\pi(y)|^{2} \\
& \quad \leq \sum_{1 \leq i,j \leq N} \left[  \zeta_{1}^{i} (y) - \zeta_{1}^{i}(x) \right]  \left[ \sum_{1 \leq \alpha,\beta \leq N  }A_{ij}^{\alpha \beta}(y) \pi_{\alpha}(y) \pi_{\beta}(y) \right] \left[  \zeta_{1}^{j} (y) - \zeta_{1}^{j}(x) \right]  \\
& \quad \leq  |\zeta_1(x) - \zeta_1(y)| \sum_{1 \leq i \leq N}  \left| \sum_{1 \leq j \leq N}  \left[ \sum_{1 \leq \alpha,\beta \leq n  } A_{ij}^{\alpha \beta}(y) \pi_{\alpha}(y) \pi_{\beta}(y) \right] \left[  \zeta_{1}^{j} (y) - \zeta_{1}^{j}(x) \right] \right|,
\end{aligned}\end{equation*}
which implies that
\begin{equation}\begin{aligned}\label{HOD810}
& \lambda |\zeta_1(x) - \zeta_1(y)| |\pi(y)|^{2}  \\
& \quad \leq \sum_{1 \leq i \leq N} \left| \sum_{1 \leq j \leq N} \left[ \sum_{1 \leq \alpha,\beta \leq n  }  A_{ij}^{\alpha \beta}(y) \pi_{\alpha}(y) \pi_{\beta}(y) \right] \left[  \zeta_{1}^{j} (y) - \zeta_{1}^{j}(x) \right] \right|.
\end{aligned}\end{equation}
We next estimate the right-hand side of \eqref{HOD790}. With \eqref{HOD780}, one can prove that
\begin{equation*}\begin{aligned}\label{}
& \left| \sum_{1 \leq j \leq N}  \sum_{1 \leq \alpha,\beta \leq n  } \left[ A_{ij}^{\alpha \beta}(y) \pi_{\alpha}(y) \pi_{\beta}(y)   \zeta_{1}^{j}(y) - A_{ij}^{\alpha \beta}(x) \pi_{\alpha}(x) \pi_{\beta}(x)  \zeta_{1}^{j}(x) \right]  \right| \\
& \quad \leq c |\pi(y)| \Big[ |U(y) - U(x)| + |F(y) - F(x)| \Big] \\
& \qquad +  c  \Big[ |U(x)| + |F(x)| \Big]  \left| \pi(x) - \pi(y) \right|  \\
& \qquad +  c  \Big[ |U(x)| + |F(x)| \Big]   |\pi(y)| \sum_{1 \leq i,j \leq N}   \sum_{1 \leq \alpha, \beta \leq n  }  \left| A_{ij}^{\alpha \beta}(y)  - A_{ij}^{\alpha \beta}(x) \right|,
\end{aligned}\end{equation*}
for any $i  = 1, \cdots, N$. Since $\pi_{1}=-1$, one can check from \eqref{ell2} that
\begin{equation*}\begin{aligned}
& \left| \sum_{1 \leq j \leq N} \sum_{1 \leq \alpha,\beta \leq n  } \left[  A_{ij}^{\alpha \beta}(x)  \pi_{\alpha}(x) \pi_{\beta}(x)- A_{ij}^{\alpha \beta}(y) \pi_{\alpha}(y) \pi_{\beta}(y)  \right] \zeta_{1}^{j}(x) \right| \\
& \quad \leq c 
\sum_{1 \leq i,j \leq N}  \sum_{1 \leq \alpha,\beta \leq n  }  \left|  A_{ij}^{\alpha \beta}(x) - A_{ij}^{\alpha \beta}(y) \right||\pi(x)| |\pi(y)| \\
& \qquad + c \left| \pi(x) - \pi(y) \right|\Big[ |\pi(x)| + |\pi(y)| \Big]   |\zeta_1(x)|,
\end{aligned}\end{equation*}
for any $i  = 1, \cdots, N$. In view of Lemma \ref{HODS600}, we have that
\begin{equation}\label{HOD820}
|\zeta_1(x)| \leq |\pi(x)||\zeta_1(x)| \leq c(n,N,\lambda,\Lambda) \Big[ |U(x)| + |F(x)| \Big].
\end{equation}
By applying the above three estimates and that $\pi(x) = \pi(x) - \pi(y) + \pi(y)$ to \eqref{HOD790}, we get that
\begin{equation*}\begin{aligned}\label{}
& \left| \sum_{1 \leq j \leq N}  \left[ \sum_{1 \leq \alpha,\beta \leq n  } A_{ij}^{\alpha \beta}(y) \pi_{\alpha}(y) \pi_{\beta}(y) \right] \left[  \zeta_{1}^{j} (y) - \zeta_{1}^{j}(x) \right] \right|\\
& \quad \leq c |\pi(y)| \left[ |U(y) - U(x)| + |F(y) - F(x)| \right] \\
& \qquad +  c  \Big[ |U(x)| + |F(x)| \Big] \left[ |\pi(y)| \, \left| \pi(x) - \pi(y)\right| +  \left| \pi(x) - \pi(y)\right|^{2}  \right] \\
& \qquad +  c  \Big[ |U(x)| + |F(x)| \Big] \left[ |\pi(y)| \sum_{1 \leq i,j \leq N}   \sum_{1 \leq \alpha, \beta \leq n  }  \left| A_{ij}^{\alpha \beta}(y)  - A_{ij}^{\alpha \beta}(x) \right| \right],
\end{aligned}\end{equation*}
for any $i  = 1, \cdots, N$. Since $\pi(y)  = \big( -1,\pi_{2}(y), \cdots, \pi_{n}(y) \big)$, we find from \eqref{HOD810} that 
\begin{equation}\begin{aligned}\label{HOD830}
& |\pi(y)||\zeta_1(x) - \zeta_1(y)| \\
& \quad \leq c \Big[ |U(y) - U(x)| + |F(y) - F(x)| \Big] \\
& \qquad +  c  \Big[ |U(x)| + |F(x)| \Big] \left[  \left| \pi(x) - \pi(y)\right| +  \left| \pi(x) - \pi(y)\right|^{2}  \right] \\
& \qquad +  c  \Big[ |U(x)| + |F(x)| \Big]  \sum_{1 \leq i,j \leq N}   \sum_{1 \leq \alpha, \beta \leq n  }  \left| A_{ij}^{\alpha \beta}(y)  - A_{ij}^{\alpha \beta}(x) \right|.
\end{aligned}\end{equation}
Moreover, combining \eqref{HOD740}, \eqref{HOD820} and \eqref{HOD830}, we discover that
\begin{equation*}\begin{aligned}
|\zeta_\beta(y)-\zeta_\beta(x)|&\leq |U(y)-U(x)|+|\pi(y)-\pi(x)||\zeta_1(x)|+  |\pi(y)||\zeta_1(y)-\zeta_1(x)| \\ 
 & \leq c \Big[ |U(y) - U(x)| + |F(y) - F(x)| \Big] \\
& \quad +  c  \Big[ |U(x)| + |F(x)| \Big] \left[  \left| \pi(x) - \pi(y)\right| +  \left| \pi(x) - \pi(y)\right|^{2}  \right] \\
& \quad +  c  \Big[ |U(x)| + |F(x)| \Big]  \sum_{1 \leq i,j \leq N}   \sum_{1 \leq \alpha, \beta \leq n  }  \left| A_{ij}^{\alpha \beta}(y)  - A_{ij}^{\alpha \beta}(x) \right|.
\end{aligned}\end{equation*} 
for any $\beta = 2,\cdots,n$. With $|\pi(y)|\geq 1$, this and \eqref{HOD830} complete the proof.

\end{proof}

\section{Excess decay estimates}\label{GEW}

We obtain the desired excess decay estimates in this section. For the first subsection, we consider the case when $|\pi'|$ decays with respect to the size of the cube and the coefficients are piecewise constant. Then in the next subsection, we consider the case when $|\pi'|$ has no decay assumption and the coefficients are piecewise constant. For the last subsection, we handle the case when the coefficients are piecewise H\"{o}lder continuous with no decay assumption on $|\pi'|$ by using the perturbation argument.

\subsection{Piecewise constant coefficients with decay assumption}\label{PCCDA}

Choose a size $\tau \in (0,R]$. For a composite cube $\left( Q_{\tau}, \left\{ \varphi_{k} : k \in K_{+} \right\} \right)$, let $\pi$ be  the derivative of the naturally induced flow $\pi : Q_{\tau} \to \br^{n}$ in Definition \ref{derivative of flow}. Then
\begin{equation*}\label{}
\pi = (-1,\pi') = ( -1, \pi_{2}, \cdots, \pi_{n} )
\end{equation*}
where
\begin{equation*}\begin{aligned}\label{}
\pi_{\alpha}(x) = D_{\alpha}\varphi_{k+1}( x') \cdot T_{k}(x)
+ D_{\alpha}\varphi_{k}( x')  \cdot [1-T_{k}(x)]
\quad \text{ in } \quad Q_{\tau}^{k},
\end{aligned}\end{equation*}
for any $k \in K$ and  $\alpha \in \{ 2, \cdots, n \}$. For some universal constant $\nu \geq 1$ which will be determined later, we also assume an decay of $\pi'$ that
\begin{equation}\label{PCC230}
|\pi'(0)|=0
\qquad \text{and} \qquad
|\pi'| \leq \nu \left( \frac{ \rho }{ R } \right)^{2\mu}
\quad \text{in} \quad Q_{\rho},
\end{equation}
for any $ 0< \rho \leq \tau$.

\sskip

For this subsection, we employ the letter $c \geq 1$ to denote any constants that can be explicitly computed in terms such as $n$, $N$, $\lambda$, $\Lambda$, $\kappa$, $R^{\gamma} \sup_{k \in K_{+}} \left[ D_{x'}\varphi_{k} \right]_{C^{\gamma}(Q_{\tau}')}$ and the number of elements in the set $K$.

\sskip

We first handle the case when the coefficients are piecewise constant. For the constants $A_{ij,k}^{\alpha \beta}, F_{\alpha,k}^{i}$ $(1 \leq \alpha, \beta \leq n, \ 1 \leq i,j \leq N, ~ k \in K)$ satisfying that 
\begin{equation}\label{PCC240} 
\lambda |\xi|^{2} 
\leq A_{ij,k}^{\alpha \beta } \xi_{\alpha}^{i} \xi_{\beta}^{j} 
\qquad \text{and} \qquad
\left| A_{ij,k}^{\alpha \beta} \right| \leq \Lambda 
\qqquad
\left( \xi \in \br^{Nn} \right),
\end{equation}
we define $A_{ij}^{\alpha \beta}, F_{\alpha}^{i}$ $(1 \leq \alpha, \beta \leq n, \ 1 \leq i,j \leq N)$ as 
\begin{equation}\label{PCC250}
A_{ij}^{\alpha \beta}(x) 
= \sum_{k \in K} A_{ij,k}^{\alpha \beta} \chi_{Q_{\tau}^{k}},
\qquad \text{and} \qquad
F_{\alpha}^{i}(x) 
= \sum_{k \in K} F_{\alpha,k}^{i} \chi_{Q_{\tau}^{k}}.
\end{equation}
We remark that $A_{ij}^{\alpha \beta }$ and $F_{\alpha}^{i}$ $(1 \leq \alpha, \beta \leq n, \ 1 \leq i,j \leq N )$  are constant in each $Q_{\tau}^{k}$ $(k \in K)$. Then one can check from \eqref{PCC240} that 
\begin{equation}\label{PCC260} 
\lambda |\xi|^{2} 
\leq A_{ij}^{\alpha \beta }(x) \xi_{\alpha}^{i} \xi_{\beta}^{j} 
\qquad \text{and} \qquad
\big| A_{ij}^{\alpha \beta}(x) \big| \leq \Lambda 
\qquad \quad
\left( x \in Q_{\tau}, ~ \xi \in \br^{Nn} \right).
\end{equation} 

\smallskip

Let $w$ be a weak solution of
\begin{equation}\label{PCC270}
D_{\alpha} \left[ A_{ij}^{\alpha \beta}(x) D_{\beta}w^{j} \right]  =  D_{\alpha} F_{\alpha}^{i} ~ \text{ in } ~ Q_{\tau}.
\end{equation}
By using Gehring-Giaquinta-Modica type inequality, see for instance \cite[Theorem 5.6]{BSWL1}, one can prove that
\begin{equation}\begin{aligned}\label{PCC280}
\left( \mint_{ Q_{\frac{\theta}{2}} } |Dw|^{2+\sigma} \, dx \right)^{\frac{2}{2+\sigma}}
\leq c \left(  \mint_{Q_{\theta}} |Dw|^{2}  \, dx + \left \| F \right \|_{L^{\infty}(Q_{\theta})}^{2} \right)
\qquad \left( \theta \in (0,\tau] \right),
\end{aligned}\end{equation}
for some small universal constant $\sigma \in (0,1]$. We define  $W : Q_{\tau} \to \br^{Nn}$  as $W = \left( W^{1}, \cdots, W^{N} \right)^{T}$ where
\begin{equation}\label{PCC290}
W^{i} = \left( \left[ \left( - \sum_{1 \leq j \leq N} \sum_{1 \leq \beta \leq n} A_{ij}^{1 \beta} D_{\beta}w^{j} \right) + F_{1}^{i} \right], D_{x'}w^{i} \right),
\end{equation}
for any $1 \leq i \leq N$.

\smallskip

Fix $\theta \in (0,\tau] $. For each $k \in K_{+}$, one can choose $z_{k}' \in Q_{\theta}'$ so that
\begin{equation}\label{PCC313}
\varphi_{k}(z_{k}') \leq \varphi_{k+1}(z_{k+1}')
\qquad (k \in K),
\end{equation}
and
\begin{equation}\label{PCC310}
(\varphi_{k}(z_{k}'),z_{k}') \in Q_{ \theta }
\quad \text{if} \quad
Q_{\theta} \cap \left\{ (\varphi_{k}(x'),x') : x' \in Q_{\theta}' \right\} \not = \emptyset
\qquad (k \in K_{+}).
\end{equation}
Set
\begin{equation}\label{PCC315}
z_{k} = (\varphi_{k}(z_{k}'),z_{k}').
\end{equation}
Then by \eqref{PCC313} and \eqref{PCC310},
\begin{equation}\label{PCC320}
z_{k}^{1} \leq z_{k+1}^{1}
\qquad (k \in K).
\end{equation}
It follows from the definition of $\pi'$ in Definition \ref{derivative of flow}, \eqref{PCC230} and \eqref{PCC310} that
\begin{equation}\label{PCC330} 
z_{k} \in Q_{\theta} 
\quad \Longrightarrow \quad
|D_{x'}\varphi_{k}(z_{k}')| 
= | \pi'(\varphi_{k}(z_{k}'),z_{k}')| 
= |\pi'(z_{k})|
\leq \nu  \left( \frac{ \theta }{ R } \right)^{2\mu}.
\end{equation}
Since $D_{x'}\varphi_{k} \in C^{\gamma} \left( Q_{\theta}' \right)$ $(k \in K_{+})$ and $2\mu = \frac{\gamma}{\gamma+1}$ in \eqref{mu}, we discover that $R^{2\mu} \left[ D_{x'}\varphi_{k} \right]_{C^{2\mu}(Q_{\theta}')} \leq c R^{\gamma} \left[ D_{x'}\varphi_{k} \right]_{C^{\gamma}(Q_{\theta}')}$. So we find from \eqref{PCC310} and \eqref{PCC330} that
\begin{equation}\label{PCC340} 
Q_{ \theta } \cap \left \{ (\varphi_{k}(x'),x') : x' \in Q_{ \theta }' \right \} \not = \emptyset
\quad \Lra \quad
|D_{x'}\varphi_{k}|\leq \nu  \left( \frac{ \theta }{ R } \right)^{2\mu} ~  \text{ in } ~  Q_{ \theta }',
\end{equation}
for any $k \in K_{+}$.

We define $\bar{A}_{ij}^{\alpha \beta}, \bar{F}_{\alpha}^{i} $ $(1 \leq \alpha, \beta \leq n, \ 1 \leq i,j \leq N)$ as 
\begin{equation}\label{PCC350}
\bar{A}_{ij}^{\alpha \beta}(x^{1}) 
= \sum_{k \in K} A_{ij,k}^{\alpha \beta} \chi_{z_{k}^{1} < x^{1} \leq z_{k+1}^{1}}
~ \text{and} \
\bar{F}_{\alpha}^{i}(x^{1}) 
= \sum_{k \in K} F_{\alpha,k}^{i} \chi_{z_{k}^{1} < x^{1} \leq z_{k+1}^{1}}
~ \text{in} \  Q_{\theta}.
\end{equation}
Then one can check from \eqref{PCC240} that 
\begin{equation}\label{PCC360} 
\lambda |\xi|^{2} 
\leq \bar{A}_{ij}^{\alpha \beta }(x^1) \xi_{\alpha}^{i} \xi_{\beta}^{j} 
\quad \text{and} \quad
\big| \bar{A}_{ij}^{\alpha \beta}(x^1) \big| \leq \Lambda 
\qquad 
\left( x^1 \in (-\theta,\theta), ~ \xi \in \br^{Nn} \right).
\end{equation}
Also one can  check from \eqref{PCC310} and \eqref{PCC320} that
\begin{equation}\label{PCC370} 
\big\| \bar{F} \big\|_{L^{\infty}(Q_{\theta})}
\leq  \big\| F \big\|_{L^{\infty}(Q_{\theta})},
\end{equation}
by using that 
\begin{equation*}
Q_{\theta}^{k} = \emptyset 
\qquad \Lra \qquad 
\left\{ (x^{1},x') \in Q_{\theta} : z_{k}^{1} < x^{1} \leq  z_{k+1}^{1} \right\} = \emptyset.
\end{equation*}
With \eqref{PCC315} and \eqref{PCC340}, one can compare $A_{ij}^{\alpha \beta}$ and $F_{\alpha}^{i}(x) $ with $\bar{A}_{ij}^{\alpha \beta}$ and $\bar{F}_{\alpha}^{i}(x)$ respectively:
\begin{equation}\label{PCC380}
\left| \left\{ x \in Q_{\theta} : \bar{A}_{ij}^{\alpha \beta} \not = A_{ij}^{\alpha \beta} \right\} \right| 
+ \left| \left\{ x \in Q_{\theta} : \bar{F}_{\alpha}^{i} \not = F_{\alpha}^{i} \right\} \right|
\leq c  \nu  \left(\frac{\theta}{R}\right)^{2\mu} \theta^{n}.
\end{equation}

Let $h$ be the weak solution of
\begin{equation}\label{PCC410}\left\{\begin{array}{rcll}
D_{\alpha} \left[ \bar{A}_{ij}^{\alpha \beta}(x^{1}) D_{\beta}h^{j} \right] & = & D_{\alpha} \bar{F}_{\alpha}^{i}(x^{1}) & ~ \text{ in } ~ Q_{\theta},\\
h & = & w & ~ \text{ in } ~ \partial Q_{\theta}.
\end{array}\right.\end{equation}
Then set $H : Q_{\theta} \to \br^{Nn}$ as $H = \left( H^{1}, \cdots, H^{N} \right)^{T}$ where
\begin{equation}\label{PCC420} 
H^{i} = \left( \left[ \left( - \sum_{1 \leq j \leq N} \sum_{1 \leq \beta \leq n } \bar{A}_{ij}^{1 \beta} D_{\beta}h^{j} \right) + \bar{F}_{1}^{i} \right], D_{x'}h^{i} \right),
\end{equation}
for any $1 \leq i \leq N$.

\begin{lem}\label{PCCS500}
Suppose \eqref{PCC230}, \eqref{PCC240} and that $Dw \in L^{2+\sigma}(Q_{\theta})$ for some $\sigma \in (0,\infty]$. Then for $H$ in \eqref{PCC420}, we have that
\begin{equation}\label{PCC530} 
\mint_{Q_{\theta}} |W - H|^{2}  dx
\leq c  \nu^{\frac{\sigma}{2+\sigma}}  \left( \frac{ \theta }{ R } \right)^{\frac{ 2\mu \sigma }{2+\sigma} }  
\left[ \left( \mint_{Q_{\theta}} |Dw|^{2+\sigma}   dx \right)^{\frac{2}{2+\sigma}} + \left \| F \right \|_{L^{\infty} \left( Q_{\theta} \right)}^{2} \right].
\end{equation}
\end{lem}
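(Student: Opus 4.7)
The plan is to estimate $W - H$ by comparing the solutions $w$ and $h$ through their difference $v := w - h \in W_0^{1,2}(Q_\theta)$, and to exploit the fact that the coefficients $A, F$ and $\bar A, \bar F$ agree except on the small set
$$E_\theta := \bigl\{ x \in Q_\theta : A_{ij}^{\alpha\beta}(x) \neq \bar A_{ij}^{\alpha\beta}(x^1) \bigr\} \cup \bigl\{ x \in Q_\theta : F_\alpha^i(x) \neq \bar F_\alpha^i(x^1) \bigr\}.$$
The quantitative input is \eqref{PCC380}, i.e.\ $|E_\theta|/\theta^n \leq c\,\nu(\theta/R)^{2\mu}$; this is where the decay hypothesis \eqref{PCC230} on $\pi'$ is consumed, via \eqref{PCC340}.

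Subtracting \eqref{PCC410} from \eqref{PCC270}, I would derive the equation
$$D_\alpha\left[\bar A_{ij}^{\alpha\beta} D_\beta v^j\right] = D_\alpha\left[(\bar A_{ij}^{\alpha\beta} - A_{ij}^{\alpha\beta}) D_\beta w^j\right] + D_\alpha(F_\alpha^i - \bar F_\alpha^i) \quad \text{in } Q_\theta,$$
with zero boundary data. Testing against $v$ itself, using the ellipticity \eqref{PCC360}, Young's inequality, the bound $|F - \bar F| \leq 2\|F\|_{L^\infty(Q_\theta)}$ from \eqref{PCC370}, and the observation that the right-hand side integrands vanish off $E_\theta$, leads to
$$\int_{Q_\theta} |Dv|^2\,dx \;\leq\; c \int_{E_\theta} |Dw|^2\,dx + c\,|E_\theta|\,\|F\|_{L^\infty(Q_\theta)}^2.$$
A direct computation from \eqref{PCC290} and \eqref{PCC420} shows $|W - H| \leq c|Dv| + c\,\chi_{E_\theta}(|Dw| + \|F\|_{L^\infty(Q_\theta)})$: away from $E_\theta$ the coefficients coincide and the difference reduces to $-\bar A_{ij}^{1\beta}D_\beta v^j$ in the first component and $D_\beta v^i$ in the remaining components, while on $E_\theta$ the extra terms $(\bar A - A)Dw + (F - \bar F)$ are controlled by \eqref{ell2}. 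Integrating and combining with the energy bound reduces \eqref{PCC530} to controlling $\theta^{-n}\int_{E_\theta}|Dw|^2\,dx$ and $(|E_\theta|/\theta^n)\|F\|_{L^\infty(Q_\theta)}^2$.

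For the first of these I would apply H\"older's inequality with exponents $(2+\sigma)/2$ and $(2+\sigma)/\sigma$, which together with the Gehring-type self-improvement \eqref{PCC280} gives
$$\frac{1}{\theta^n}\int_{E_\theta}|Dw|^2\,dx \;\leq\; \left(\frac{|E_\theta|}{\theta^n}\right)^{\sigma/(2+\sigma)} \left(\mint_{Q_\theta}|Dw|^{2+\sigma}\,dx\right)^{2/(2+\sigma)},$$
and the measure bound then produces the factor $\nu^{\sigma/(2+\sigma)}(\theta/R)^{2\mu\sigma/(2+\sigma)}$. The main bookkeeping point, and the only place requiring care, is matching the same exponent for the $F$-term: here I would use the trivial inequality $|E_\theta|/\theta^n \leq 1$ to write
$$\frac{|E_\theta|}{\theta^n} \;\leq\; \left(\frac{|E_\theta|}{\theta^n}\right)^{\sigma/(2+\sigma)} \;\leq\; c\,\nu^{\sigma/(2+\sigma)}\left(\frac{\theta}{R}\right)^{2\mu\sigma/(2+\sigma)},$$
so that both contributions carry the claimed decay factor. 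Adding the two estimates yields \eqref{PCC530}.
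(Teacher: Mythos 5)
Your proposal is correct and follows essentially the same route as the paper: test the difference of the two equations by $w-h$, use ellipticity and Young's inequality to reduce to integrals of $|Dw|^2$ and $\|F\|_{L^\infty}^2$ over the exceptional set where the coefficients disagree, then combine H\"older's inequality with exponents $\frac{2+\sigma}{2},\frac{2+\sigma}{\sigma}$, the higher integrability \eqref{PCC280}, and the measure bound \eqref{PCC380}, and finally compare $W-H$ pointwise through the definitions \eqref{PCC290} and \eqref{PCC420}. The only cosmetic difference is that the paper applies H\"older to $|\bar A - A|^{2(2+\sigma)/\sigma}$ over all of $Q_\theta$ (equivalent, since the integrand vanishes off the exceptional set) and also H\"olders the $F$-term rather than using $|E_\theta|/\theta^n\lesssim 1$ to raise the exponent, but both devices yield the same decay factor.
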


\begin{proof}
We first estimate $Dw- Dh$. We test \eqref{PCC270} and \eqref{PCC410} by $w-h$ in $Q_{\theta}$ to find that 
\begin{equation*}\begin{aligned}\label{} 
& \mint_{Q_{\theta}} \left \langle \bar{A}_{ij}^{\alpha \beta} \left[ D_{\beta}w^{j} - D_{\beta} h^{j} \right] , D_{\alpha}w^{i} - D_{\alpha}h^{i} \right \rangle \, dx\\
& \quad =  \mint_{Q_{\theta}} \left \langle \left[ \bar{A}_{ij}^{\alpha \beta}  - A_{ij}^{\alpha \beta} \right]  D_{\beta}w^{j}, D_{\alpha}w^{i} - D_{\alpha}h^{i} \right \rangle + \left \langle F_{\alpha}^{i} - \bar{F}_{\alpha}^{i}, D_{\alpha}w^{i} - D_{\alpha}h^{i} \right \rangle \, dx.
\end{aligned}\end{equation*}
By Young's inequality, we obtain from \eqref{PCC360} that
\begin{equation}\label{PCC550} 
\mint_{Q_{\theta}} \big| Dw - Dh \big|^{2}   dx
\leq c \left[ \mint_{Q_{\theta}} \left| \bar{A}_{ij}^{\alpha \beta} - A_{ij}^{\alpha \beta} \right|^{2}|Dw|^{2} \, dx 
+  \mint_{Q_{\theta}} \big| \bar{F}_{\alpha}^{i} - F_{\alpha}^{i} \big|^{2} \ dx \right].
\end{equation}
By H\"{o}lder's inequality, we obtain that
\begin{equation*}
\mint_{Q_{\theta}} \left| \bar{A}_{ij}^{\alpha \beta} - A_{ij}^{\alpha \beta} \right|^{2}|Dw|^{2} \, dx
\leq \left[ \mint_{Q_{\theta}} \left| \bar{A}_{ij}^{\alpha \beta} - A_{ij}^{\alpha \beta} \right|^{\frac{2(2+\sigma)}{\sigma} } \right]^{\frac{\sigma}{2+\sigma}} \left[ \mint_{Q_{\theta}} |Dw|^{2+\sigma} \, dx \right]^{\frac{2}{2+\sigma}}.
\end{equation*}
So we find from \eqref{PCC360} and \eqref{PCC380} that
\begin{equation}\begin{aligned}\label{PCC570}
\mint_{Q_{\theta}} \left| \bar{A}_{ij}^{\alpha \beta} - A_{ij}^{\alpha \beta} \right|^{2}|Dw|^{2} \, dx 
& \leq c \nu^{\frac{\sigma}{2+\sigma}}  \left( \frac{ \theta }{ R } \right)^{\frac{ 2\mu \sigma }{2+\sigma} }  
\left( \mint_{Q_{\theta}} |Dw|^{2+\sigma} \, dx \right)^{\frac{2}{2+\sigma}}.
\end{aligned}\end{equation}
In view of \eqref{PCC380}, we obtain that
\begin{equation}\begin{aligned}\label{PCC580} 
\int_{Q_{\theta}} \left| \bar{F}_{\alpha}^{i} - F_{\alpha}^{i} \right|^{2} \, dx
& \leq \int_{ \left\{ x \in Q_{\theta} : F \not = \bar{F}  \right\}  } \left| \bar{F} - F \right|^{2} \, dx \\
& \leq \left|\left\{ x \in Q_{\theta} : F \not = \bar{F}  \right\}\right|^{\frac{\sigma}{2+\sigma}} 
\left( \int_{Q_{\theta}} \left| \bar{F}_{\alpha}^{i} - F_{\alpha}^{i} \right|^{2+\sigma} \right)^{\frac{2}{2+\sigma}} \\
& \leq c \nu^{\frac{\sigma}{2+\sigma}}  \left( \frac{ \theta }{ R } \right)^{\frac{ 2\mu \sigma }{2+\sigma} }   \theta^{n} \left[ \left\| \bar{F} \right\|_{L^{\infty}(Q_{\theta})}^{2} + \left\| F \right\|_{L^{\infty}(Q_{\theta})}^{2} \right].
\end{aligned}\end{equation}
With \eqref{PCC370}, it follows from \eqref{PCC550}, \eqref{PCC570} and \eqref{PCC580} that
\begin{equation}\begin{aligned}\label{PCC590} 
& \mint_{Q_{\theta}} |Dw - Dh|^{2} \, dx \\
& \qquad \leq c \nu^{\frac{\sigma}{2+\sigma}}  \left( \frac{ \theta }{ R } \right)^{\frac{ 2\mu \sigma }{2+\sigma} }  \left[   \left(  \mint_{Q_{\theta}} |Dw|^{2+\sigma} \, dx  \right)^\frac{2}{2+\sigma} +  \left\| F \right\|_{L^{\infty} \left( Q_{\theta} \right)}^{2}\right].
\end{aligned}\end{equation}
We obtain from \eqref{PCC570} that
\begin{equation}\begin{aligned}\label{PCC600}
\int_{Q_{\theta}} \left| \bar{A}_{ij}^{1 \beta} D_{\beta}w^{j} - A_{ij}^{1 \beta}  D_{\beta}w^{j} \right|^{2}  dx 
& \leq c \mint_{Q_{\theta}} \left| \bar{A}_{ij}^{\alpha \beta} - A_{ij}^{\alpha \beta} \right|^{2} |Dw|^{2} \, dx  \\
& \leq c \nu^{\frac{\sigma}{2+\sigma}}  \left( \frac{ \theta }{ R } \right)^{\frac{ 2\mu \sigma }{2+\sigma} }  \left( \mint_{Q_{\theta}} |Dw|^{2+\sigma} dx \right)^{\frac{2}{2+\sigma}}.
\end{aligned}\end{equation}
for any $1 \leq \beta \leq n$ and $1 \leq i,j \leq N$. With \eqref{PCC290} and \eqref{PCC420}, we have from \eqref{PCC590} and \eqref{PCC600} that
\begin{equation*}\begin{aligned}\label{} 
\mint_{Q_{\theta}} |W - H|^{2} \, dx
& \leq c \nu^{\frac{\sigma}{2+\sigma}}  \left( \frac{ \theta }{ R } \right)^{\frac{ 2\mu \sigma }{2+\sigma} }    \left[ \left( \mint_{Q_{\theta}} |Dw|^{2+\sigma}  \, dx \right)^{\frac{2}{2+\sigma}} + \left \| F \right \|_{L^{\infty}(Q_{\theta})}^{2} \right],
\end{aligned}\end{equation*}
where $H : Q_{\theta} \to \br^{N}$ is defined in \eqref{PCC420}.
\end{proof}

\begin{lem}\label{PCCS800}
Suppose \eqref{PCC230}, \eqref{PCC240} and that $Dw \in L^{2+\sigma}(Q_{\theta})$ for some $\sigma \in (0,\infty]$.  Let $w$ be the weak solution of \eqref{PCC270}. Then 
\begin{equation*}\begin{aligned}\label{}
& \mint_{Q_{\rho}} \left| W  - (W)_{Q_{\rho}} \right|^{2} \, dx \\
& \quad \leq c \left( \frac{\rho}{\theta} \right) \mint_{Q_{\theta}} \left| W - (W)_{Q_{\theta}} \right|^{2} \, dx \\
& \qquad + c \nu^{\frac{\sigma}{2+\sigma}}  \left( \frac{ \theta }{ R } \right)^{\frac{ 2\mu \sigma }{2+\sigma} }   \left( \frac{\theta}{\rho} \right)^{n}  \left[ \left( \mint_{Q_{\theta}} |Dw|^{2+\sigma}  \, dx \right)^{\frac{2}{2+\sigma}} + \left \| F \right \|_{L^{\infty}(Q_{\theta})}^{2} \right] ,
\end{aligned}\end{equation*}
for any $0 < \rho \leq \theta \leq \tau$.
\end{lem}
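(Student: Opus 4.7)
The plan is to introduce the comparison function $H$ from \eqref{PCC420} and combine the excess decay for $H$ (which follows from Lemma \ref{GELES7000} after rescaling) with the comparison estimate for $W - H$ already proved in Lemma \ref{PCCS500}. This is the classical perturbation scheme adapted to our functional $W$.

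First I would decompose
\begin{equation*}
\mint_{Q_\rho} \left|W - (W)_{Q_\rho}\right|^2 \, dx \leq 2 \mint_{Q_\rho} |W - H|^2 \, dx + 2 \mint_{Q_\rho} \left|H - (H)_{Q_\rho}\right|^2 \, dx,
\end{equation*}
and control the first integral by enlarging the domain of integration, $\mint_{Q_\rho} |W - H|^2 \, dx \leq (\theta/\rho)^n \mint_{Q_\theta} |W - H|^2 \, dx$, and then feeding Lemma \ref{PCCS500} into the right-hand side. This already produces the perturbation (second) term on the right-hand side of the claimed inequality.

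For the remaining term I would rescale Lemma \ref{GELES7000} from $Q_2$ to $Q_\theta$. Setting $\tilde{h}(y) = (2/\theta)\, h(\theta y/2)$, the rescaled coefficients $\tilde{A}(y^1) = \bar{A}(\theta y^1/2)$ and right-hand side $\tilde{F}(y^1) = \bar{F}(\theta y^1/2)$ still depend only on the first variable, and the ellipticity bounds \eqref{PCC360} are preserved. Lemma \ref{GELES7000} applied in $Q_2$ to $\tilde h$ and its associated $\tilde H$ at level $\tilde\rho = 2\rho/\theta$ therefore yields, after undoing the rescaling,
\begin{equation*}
\mint_{Q_\rho} \left|H - (H)_{Q_\rho}\right|^2 \, dx \leq c \, \frac{\rho}{\theta} \, \mint_{Q_\theta} \left|H - (H)_{Q_\theta}\right|^2 \, dx,
\end{equation*}
with $c$ depending only on $n, N, \lambda, \Lambda$. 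I would then replace $H$ by $W$ on the right-hand side via
\begin{equation*}
\mint_{Q_\theta} \left|H - (H)_{Q_\theta}\right|^2 \, dx \leq 4 \mint_{Q_\theta} \left|W - (W)_{Q_\theta}\right|^2 \, dx + 4 \mint_{Q_\theta} |W - H|^2 \, dx,
\end{equation*}
and apply Lemma \ref{PCCS500} once more to the last integral. Collecting the three displays produces precisely the inequality stated in the lemma.

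The only technical point to watch is that the rescaling in the second step does not introduce any $\theta$-dependence into the constant coming from Lemma \ref{GELES7000}. Since $\bar A$ and $\bar F$ depend on $x^1$ alone, and both \eqref{PCC360} and \eqref{PCC370} survive the change of variables, the resulting decay constant depends only on $n, N, \lambda, \Lambda$, which is compatible with the $c$-convention for this subsection. Beyond that, the argument is a clean two-step triangle inequality plus one invocation each of Lemmas \ref{GELES7000} and \ref{PCCS500} (the latter used twice, at scales $\theta$ and $\rho$).
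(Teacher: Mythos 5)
Your proposal is correct and follows essentially the same route as the paper: introduce $H$ from \eqref{PCC420}, apply the rescaled excess decay of Lemma \ref{GELES7000} to $h$ (using that $D_{\alpha}\bar{F}_{\alpha}^{i}=D_{1}\bar{F}_{1}^{i}$ since $\bar F$ depends only on $x^{1}$), and transfer between $W$ and $H$ via Lemma \ref{PCCS500} together with the triangle inequality and the measure ratio $(\theta/\rho)^{n}$. The paper compresses the triangle-inequality bookkeeping that you spell out, but the argument is identical.
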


\begin{proof}
Let $h$ be the weak solution of \eqref{PCC410} and set $H$ as in \eqref{PCC420}. Since $\bar{F}_{\alpha}^{i}(x^{1})$ $(1 \leq \alpha \leq n, 1 \leq i \leq N)$ are independent of $x'$-variables, \eqref{PCC410} yields that
\begin{equation}\label{PCC840} 
D_{\alpha} \left[ \bar{A}_{ij}^{\alpha \beta}(x^{1}) D_{\beta}h^{j} \right]   = D_{1} \bar{F}_{1}^{i}(x^{1}) ~ \text{ in } ~ Q_{\theta}.
 \end{equation}
Apply Lemma \ref{GELES7000} to $h$ in \eqref{PCC840} and $H$ in \eqref{PCC420} instead of $w$ in  \eqref{GELE6100} and $W$ in \eqref{GELE7100} respectively. Then we have that
\begin{equation*}\label{}
\mint_{Q_{\rho}} \left| H- (H)_{Q_{\rho}} \right|^{2} \, dx
\leq c \Big( \frac{\rho}{\theta} \Big)
\mint_{Q_{\theta}} \left| H - (H)_{Q_{\theta}} \right|^{2} \, dx.
\end{equation*}
It follows from Lemma \ref{PCCS500} that
\begin{equation*}\begin{aligned}
& \mint_{Q_{\rho}} \left| W  - (W)_{Q_{\rho}} \right|^{2} \, dx  \\
& \quad \leq c  \left( \frac{\rho}{\theta} \right)
\mint_{Q_{\theta}} \left| W - (W)_{Q_{\theta}} \right|^{2} \, dx \\
& \qquad + c \nu^{\frac{\sigma}{2+\sigma}}  \left( \frac{ \theta }{ R } \right)^{\frac{ 2\mu \sigma }{2+\sigma} }   \left( \frac{\theta}{\rho} \right)^{n}    \left[ \left( \mint_{Q_{\theta}} |Dw|^{2+\sigma}  \, dx \right)^{\frac{2}{2+\sigma}} + \left \| F \right \|_{L^{\infty}(Q_{\theta})}^{2} \right].
\end{aligned}\end{equation*}
\end{proof}

\begin{lem}\label{PCCS900}
There exists a constant $\varepsilon \in (0,1]$ such that if $\nu \left( \frac{ \tau }{ R } \right)^{2\mu} \leq \varepsilon$ then
\begin{equation*}\label{}
\mint_{Q_{\rho}} |W|^{2} \, dx \leq c \left[ \mint_{Q_{\tau}} |W|^{2} \, dx +  \| F \|_{L^{\infty}(Q_{\tau})}^{2} \right],
\end{equation*}
for any $ 0 < \rho \leq \tau$. Here, $\varepsilon \in (0,1]$ depends only on the constants $n$, $N$, $\lambda$, $\Lambda$, $R^{\gamma} \sup_{k \in K_{+}} [D_{x'}\varphi_{k} ]_{C^{\gamma}(Q_{\tau}')}$ and the number of the element in the set $K$.
\end{lem}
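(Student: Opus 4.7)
The strategy is to iterate the excess decay of Lemma~\ref{PCCS800} along a geometric sequence of scales and exploit the hypothesis $\nu(\tau/R)^{2\mu}\leq\varepsilon$ to make the error terms geometrically small, then bootstrap to a uniform $L^{2}$-mean bound on $W$. As a preliminary, the ellipticity \eqref{PCC260} makes $(A_{ij}^{11})$ positive definite, so one can solve \eqref{PCC290} for $D_{1}w$ in terms of $W$ and $F$ (equivalently, Lemma~\ref{HODS600} applied with $\pi=(-1,0,\ldots,0)$), yielding the pointwise bound $|Dw|\leq c(|W|+|F|)$. Combined with the reverse H\"{o}lder inequality \eqref{PCC280}, this gives, for $2\theta\leq\tau$,
\[
\Bigl(\mint_{Q_{\theta}}|Dw|^{2+\sigma}\,dx\Bigr)^{2/(2+\sigma)} \leq c\mint_{Q_{2\theta}}|W|^{2}\,dx + c\|F\|_{L^{\infty}(Q_{2\theta})}^{2}.
\]

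Write $\Phi(\rho):=\mint_{Q_{\rho}}|W-(W)_{Q_{\rho}}|^{2}\,dx$, $M(\rho):=\mint_{Q_{\rho}}|W|^{2}\,dx$, and set $\beta:=\sigma/(2+\sigma)$, $\alpha:=2\mu\beta$. Fix $\delta\in(0,1/2]$ so small that $c_{1}\delta\leq 1/2$ and $\delta^{\alpha}\leq 1/2$, where $c_{1}$ is the constant from Lemma~\ref{PCCS800}. Substituting the reverse-H\"{o}lder bound into Lemma~\ref{PCCS800} at the dyadic scales $\rho_{k}:=\delta^{k}(\tau/2)$ (so that $2\rho_{k}\leq\tau$ is automatic) and using the refined estimate $(\nu(\rho_{k}/R)^{2\mu})^{\beta}\leq\varepsilon^{\beta}\delta^{k\alpha}$, one obtains
\[
\Phi(\rho_{k+1}) \leq \tfrac{1}{2}\Phi(\rho_{k}) + C\varepsilon^{\beta}\delta^{k\alpha}\bigl[M(2\rho_{k})+\|F\|^{2}\bigr] \qquad (k\geq 0).
\]
The main obstacle is the circular dependence: the error involves $M(2\rho_{k})$, which is exactly what we aim to bound. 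This is resolved by bootstrap. I would prove by induction that $M(2\rho_{k})\leq M^{*}:=c_{*}(M(\tau)+\|F\|^{2})$ for all $k\geq 0$, with $c_{*}$ to be chosen. Under the inductive hypothesis for indices $j\leq k$, unrolling the recursion yields the summable decay
\[
\Phi(\rho_{j}) \leq C\bigl[(1/2)^{j}M(\tau) + j(1/2)^{j}\varepsilon^{\beta}(M^{*}+\|F\|^{2})\bigr],
\]
so $\sum_{j}\Phi(\rho_{j})^{1/2}$ converges (here the conditions $c_{1}\delta\leq 1/2$ and $\delta^{\alpha}\leq 1/2$ are crucial for the convolution sum to telescope).

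Then the telescoping estimate $|(W)_{Q_{\rho_{k}}}-(W)_{Q_{\rho_{0}}}|\leq\delta^{-n/2}\sum_{j<k}\Phi(\rho_{j})^{1/2}$, together with $M(\rho_{k})\leq 2\Phi(\rho_{k})+2|(W)_{Q_{\rho_{k}}}|^{2}$ and the volume comparison $M(2\rho_{k+1})\leq(2\delta)^{-n}M(\rho_{k})$, yields
\[
M(2\rho_{k+1}) \leq C'\bigl[M(\tau)+\varepsilon^{\beta}M^{*}+\varepsilon^{\beta}\|F\|^{2}\bigr].
\]
Choosing $\varepsilon$ small enough, in the stated universal constants, so that $C'\varepsilon^{\beta}\leq 1/2$, and then $c_{*}\geq 2C'$, forces $M(2\rho_{k+1})\leq M^{*}$ and closes the induction. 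For arbitrary $\rho\in(0,\tau]$ one selects $k$ with $2\rho_{k+1}<\rho\leq 2\rho_{k}$ and applies $M(\rho)\leq(2\rho_{k}/\rho)^{n}M(2\rho_{k})\leq\delta^{-n}M^{*}\leq c(M(\tau)+\|F\|^{2})$; the residual range $\rho\in(\tau/2,\tau]$ is handled directly by volume comparison with $Q_{\tau}$.
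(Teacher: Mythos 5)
Your proposal is correct and follows essentially the same route as the paper's proof: iterate the excess decay of Lemma \ref{PCCS800} along geometric scales $\delta^{k}\tau$, use $\left|Dw\right|\leq c\left(|W|+|F|\right)$ (Lemma \ref{HODS600} with $\pi=(-1,0,\dots,0)$) together with the reverse H\"{o}lder inequality to turn the $L^{2+\sigma}$ error into $\mint|W|^{2}$, and close a bootstrap via the telescoping bound on the averages with $\varepsilon$ chosen small. The only cosmetic difference is that the paper runs the recursion at the level of $\Phi^{1/2}$ and inducts on $|(W)_{Q_{\tau_{j}}}|$ rather than on $M(2\rho_{k})$, which is an equivalent bookkeeping choice.
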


\begin{proof}
The proof is similar to the paper \cite{DFMG1,KTMG1,KY2}, but we give the proof for the sake of the completeness. 

By applying Lemma \ref{PCCS800} to \eqref{PCC270} and applying Lemma \ref{HODS600} to $Dw$ and $W$ instead of $\zeta$ and $U$ with $\pi=(-1,0,\cdots,0)$, we find that 
\begin{equation}\begin{aligned}\label{PCC915}
& \mint_{Q_{\rho}} \left| W  - (W)_{Q_{\rho}} \right|^{2} \, dx  \\
& \quad \leq c  \left( \frac{\rho}{\theta} \right) \mint_{Q_{\theta}} \left| W - (W)_{Q_{\theta}} \right|^{2} \, dx \\
& \qquad + c \nu^{\frac{\sigma}{2+\sigma}}  \left( \frac{ \theta }{ R } \right)^{\frac{ 2\mu \sigma }{2+\sigma} }   \left( \frac{\theta}{\rho} \right)^{n} \left( \mint_{Q_{\theta}} |W|^{2} \, dx 
+ \| F \|_{L^{\infty}(Q_{\theta})}^{2} \right),
\end{aligned}\end{equation}
for any $0 < \rho \leq \theta \leq \tau$, which implies that
\begin{equation*}\begin{aligned}
& \left( \mint_{Q_{\rho}} \left| W  - (W)_{Q_{\rho}} \right|^{2} \, dx \right)^{\frac{1}{2}} \\
& \quad \leq c_{1} \left( \frac{\rho}{\theta} \right)^{\frac{1}{2}} \left( \mint_{Q_{\theta}} \left| W - (W)_{Q_{\theta}} \right|^{2} \, dx \right)^{\frac{1}{2}} \\
& \qquad + c_{1} \nu^\frac{ \sigma }{2(2+\sigma)} \left( \frac{ \theta }{ R } \right)^\frac{ \mu \sigma }{2+\sigma}\left( \frac{\theta}{\rho} \right)^\frac{n}{2} \left[\left( \mint_{Q_{\theta}} |W|^{2} \, dx\right)^\frac{1}{2}
+ \| F \|_{L^{\infty}(Q_{\tau})} \right].
\end{aligned}\end{equation*}
For a small constant $\delta \in (0,1)$ chosen to be later, let $\tau_{i} = \delta^{i} \tau$.
By letting $\rho=\tau_{i+1}$ and $\theta=\tau_{i}$,
\begin{equation*}\begin{aligned}
& \left( \mint_{Q_{\tau_{i+1}}} \left| W  - (W)_{Q_{\tau_{i+1}}} \right|^{2} \, dx \right)^{\frac{1}{2}} \\
& \quad \leq c_1 \delta^{\frac{1}{2}} \left( \mint_{Q_{ \tau_{i} }} \left| W - (W)_{Q_{ \tau_{i} }} \right|^{2} \, dx \right)^{\frac{1}{2}} \\
& \qquad + c_{1}\delta^{-\frac{n}{2}}  \nu^{\frac{\sigma}{2(2+\sigma)} } \left(\frac{\tau_{i}}{R}\right)^{\frac{\mu\sigma}{2+\sigma} }     \left[ \left(\mint_{Q_{  \tau_{i}  }} |W|^{2} \, dx \right)^\frac{1}{2}
+ \| F \|_{L^{\infty}(Q_{\tau})} \right],
\end{aligned}\end{equation*}
for any $i=0,1,2, \cdots$.  
Choose the universal constant  $\delta \in (0,1)$ so that $c_{1} \delta^{\frac{1}{2}} \leq \frac{1}{4}$. Since the constant $\sigma \in (0,1]$ chosen in \eqref{PCC280} is universal, select the universal constant $\varepsilon \in (0,1]$ so that
\begin{equation*}\label{}
\frac{ 20 c_{1} \delta^{-n} \varepsilon^{\frac{\sigma}{2(2+\sigma)}}   }{1-\delta^{ \frac{\mu \sigma}{2(2+\sigma) } }  } 
\leq 1,
\end{equation*}
which implies that
\begin{equation}\begin{aligned}\label{PCC930}
& 20 c_{1}  \nu^{\frac{\sigma}{2(2+\sigma)} } \left(\frac{\tau}{R}\right)^{\frac{\mu\sigma}{2+\sigma} }    \delta^{-n} \sum_{i=0}^{j}\delta_{}^{ \frac{\mu \sigma i}{ 2+\sigma } } \\
& \qquad \leq \frac{ 20 c_{1}  \delta^{-n} \nu^{\frac{\sigma}{2(2+\sigma)} } }{ 1-\delta^{ \frac{\mu \sigma}{ 2(2+\sigma) } } } \left(\frac{\tau}{R}\right)^{\frac{\mu\sigma}{2+\sigma} } 
\leq \frac{ 20  c_{1} \delta^{-n} \varepsilon^{\frac{\sigma}{2(2+\sigma)}}   }{1-\delta^{ \frac{\mu \sigma}{2(2+\sigma) } }  } 
\leq 1,
\end{aligned}\end{equation}
for any $j=0,1,2,\cdots$. Since 
\begin{equation*}
\left(\mint_{Q_{  \tau_{i}  }} |W|^{2} \, dx \right)^\frac{1}{2} \leq \left( \mint_{Q_{ \tau_{i} }} \left| W - (W)_{Q_{ \tau_{i} }} \right|^{2} \, dx \right)^{\frac{1}{2}} + \left| (W)_{Q_{ \tau_{i} }} \right|, 
\end{equation*}
we have that
\begin{equation*}\begin{aligned}
& \left( \mint_{Q_{\tau_{i+1}}} \left| W  - (W)_{Q_{\tau_{i+1}}} \right|^{2} \, dx \right)^{\frac{1}{2}} \\
& \quad \leq \frac{1}{2} \left( \mint_{Q_{ \tau_{i} }} \left| W - (W)_{Q_{ \tau_{i} }} \right|^{2} \, dx \right)^{\frac{1}{2}} \\
& \qquad + c_{1}\delta^{-\frac{n}{2}}  \nu^{\frac{\sigma}{2(2+\sigma)} } \left(\frac{\tau_{i}}{R}\right)^{\frac{\mu\sigma}{2+\sigma} }  \left(  \left| (W)_{Q_{ \tau_{i} }} \right|
+  \| F \|_{L^{\infty}(Q_{\tau})} \right),
\end{aligned}\end{equation*}
from \eqref{PCC930}. By summing up over $i=0,1,2,\cdots, j$, it follows that 
\begin{equation}\begin{aligned}\label{PCC950}
& \sum_{i=0}^{j+1} \left( \mint_{Q_{\tau_{i}}} \left| W  - (W)_{Q_{\tau_{i}}} \right|^{2} \, dx \right)^{\frac{1}{2}} \\
&\qquad \leq 4 \left( \mint_{Q_{\tau_{0}}} \left| W  - (W)_{Q_{\tau_{0}}} \right|^{2} \, dx \right)^{\frac{1}{2}} \\
& \qquad \qquad + 2 c_{1}\delta^{-\frac{n}{2}}  \nu^{\frac{\sigma}{2(2+\sigma)} } \left(\frac{\tau}{R}\right)^{\frac{\mu\sigma}{2+\sigma} } \sum_{i=0}^{j} \left[ \delta^{ \frac{\mu \sigma i}{ 2+\sigma } }  \left(  \left| (W)_{Q_{ \tau_{i} }} \right| 
+  \| F \|_{L^{\infty}(Q_{\tau})} \right) \right],
\end{aligned}\end{equation}
for any $j=0,1,2,\cdots$. 

We now claim that
\begin{equation}\label{PCC960}
\left| (W)_{Q_{ \tau_{j} }} \right|  \leq 10 \delta^{-n} \left[ \left( \mint_{Q_{\tau_{0}}} \left| W   \right|^{2} \, dx \right)^{\frac{1}{2}}  +  \| F \|_{L^{\infty}(Q_{\tau})} \right]
\quad (j=0,1,2,\cdots).
\end{equation}
We prove the claim \eqref{PCC960} by induction. First, we can easily check that \eqref{PCC960} holds when $j=0$. Next by an inductive assumption, suppose that \eqref{PCC960} holds for $0,1,\cdots,j$. From the inequality
\begin{equation*}\begin{aligned}
\left| (W)_{Q_{ \tau_{j+1} }} - (W)_{Q_{ \tau_{0} }} \right| 
& \leq 
\sum_{i=0}^{j} \mint_{Q_{\tau_{i+1}}} \left| W  - (W)_{Q_{\tau_{i}}} \right| \, dx \\
& \leq \delta^{-\frac{n}{2}}
\sum_{i=0}^{j+1} \left( \mint_{Q_{\tau_{i}}} \left| W  - (W)_{Q_{\tau_{i}}} \right|^{2} \, dx \right)^{\frac{1}{2}},
\end{aligned}\end{equation*}
and \eqref{PCC950}, we see that
\begin{equation*}\begin{aligned}
& \left| (W)_{Q_{ \tau_{j+1} }} - (W)_{Q_{ \tau_{0} }} \right| \\
& \quad \leq 4 \delta^{-\frac{n}{2}} \left( \mint_{Q_{\tau_{0}}} \left| W  - (W)_{Q_{\tau_{0}}} \right|^{2} \, dx \right)^{\frac{1}{2}} \\
& \qquad + 2 c_{1} \delta^{-n} \nu^{\frac{\sigma}{2(2+\sigma)} } \left( \frac{ \tau}{R} \right)^{\frac{\mu \sigma}{2+\sigma} }    \sum_{i=0}^{j} \left[ \delta^{ \frac{\mu \sigma i}{ 2+\sigma } }  \left(  \left| (W)_{Q_{ \tau_{i} }} \right| 
+  \| F \|_{L^{\infty}(Q_{\tau})} \right) \right].
\end{aligned}\end{equation*}
With the inductive assumption \eqref{PCC960}, we have that
\begin{equation*}
\sum_{i=0}^{j}  \delta^{ \frac{\mu \sigma i}{ 2+\sigma } }   \left| (W)_{Q_{ \tau_{i} }} \right| \leq 10\delta^{-n} \left(\sum_{i=0}^{j}  \delta^{ \frac{\mu \sigma i}{ 2+\sigma } } \right) \left[ \left( \mint_{Q_{\tau_{0}}} \left| W   \right|^{2} \, dx \right)^{\frac{1}{2}}  +  \| F \|_{L^{\infty}(Q_{\tau})} \right].
\end{equation*}
We also note that
\begin{equation}\label{PCC970}
\left( \mint_{Q_{\tau_{0}}} \left| W  - (W)_{Q_{\tau_{0}}} \right|^{2} \, dx \right)^{\frac{1}{2}} \leq 2 \left( \mint_{Q_{\tau_{0}}} \left| W  \right|^{2} \, dx \right)^{\frac{1}{2}}.
\end{equation}
Therefore, we have from \eqref{PCC930} that
\begin{equation*}\begin{aligned}
& \left| (W)_{Q_{ \tau_{j+1} }} - (W)_{Q_{ \tau_{0} }} \right| \\
& \qquad \leq 8 \delta^{-\frac{n}{2}} \left( \mint_{Q_{\tau_{0}}} \left| W  \right|^{2} \, dx \right)^{\frac{1}{2}}  + \delta^{-n}  \left[    \left( \mint_{Q_{\tau_{0}}} \left| W   \right|^{2} \, dx \right)^{\frac{1}{2}}+  \| F \|_{L^{\infty}(Q_{\tau})}  \right].
\end{aligned}\end{equation*}
We now remember that $\delta \in (0,1]$ to get the desired \eqref{PCC960}. 

\smallskip

Since $\tau_{i} = \delta^{i} \tau$, we find from \eqref{PCC930}, \eqref{PCC950}, \eqref{PCC960} and \eqref{PCC970} that
\begin{equation*}\begin{aligned}
  \mint_{Q_{\tau_{j}}} \left| W  - (W)_{Q_{\tau_{j}}} \right|^{2} \, dx 
\leq c \left[  \mint_{Q_{\tau_{0}}} \left| W   \right|^{2} \, dx    +  \| F \|^2_{L^{\infty}(Q_{\tau})} \right] 
\qquad ( j =0,1,2,\cdots),
\end{aligned}\end{equation*}
because $\delta \in (0,1)$ was chosen universal. So by \eqref{PCC960} and that $\tau_{i} =  \delta^{i} \tau$ $(i=0,1,\cdots)$,
\begin{equation*}\begin{aligned}
\mint_{Q_{\tau_{j}}} |W|^{2} \, dx
\leq c \left[  \mint_{Q_{\tau}} |W|^{2} \, dx +  \| F \|_{L^{\infty}(Q_{\tau})}^{2} \right] 
\qquad ( j =0,1,2,\cdots).
\end{aligned}\end{equation*}
Since $\delta \in (0,1)$ was a universal constant, we find that
\begin{equation*}\label{}
\mint_{Q_{\rho}} |W|^{2} \, dx \leq c \left[ \mint_{Q_{\tau}} |W|^{2} \, dx +  \| F \|_{L^{\infty}(Q_{\tau})}^{2} \right],
\end{equation*}
for any $ 0 < \rho \leq \tau$ and the lemma follows.
\end{proof}

\subsection{Linear coordinate transformation} 

To apply Lemma \ref{PCCS800} and Lemma \ref{PCCS900} for the general situation, we use a linear coordinate transformation. To use a coordinate transformation $\Psi$, we define `the derivative of the naturally induced flow respect to $\Psi$' in Definition \ref{new derivative of flow} corresponds to `the derivative of the naturally induced flow'  in Definition \ref{derivative of flow}. 

For this subsection, we employ the letter $c \geq 1$ to denote any constants that can be explicitly computed in terms $n$, $ \sup_{k \in K_{+}} \left\| D_{x'}\varphi_{k} \right\|_{L^{\infty}(Q_{r}')}$, $R^{\gamma} \sup_{k \in K_{+}} \left[ D_{x'}\varphi_{k} \right]_{C^{\gamma}(Q_{r}')}$ and the number of the element in the set $K$.

\begin{defn}\label{new derivative of flow}
Suppose that $\left( Q_{r}, \left\{ \varphi_{k} : k \in K_{+} \right\} \right)$ is a composite cube. For any $Q_{\tau}(z) \subset Q_{r}$ and $\zeta' \in \br^{n-1}$, let $\Psi : Q_{\tau}(z) \to \br^{n}$ be a linear coordinate transformation defined as 
\begin{equation*}
\Psi(x) = \left( x^{1} - z^{1} - \zeta' \cdot (x'-z') , x' - z' \right).
\end{equation*}
Let $y = \Psi(x)$ be the new coordinate system. Then we define the derivative of the naturally induced flow respect to $\Psi$ as follows.

Let the new graph $ \left \{ \left( \tilde{\varphi}_{k}(y'), y' \right) : y' \in Q_{\tau}' \right\}$ be the transformation of the graph $ \left \{ \left( \varphi_{k}(x'), x' \right) : x' \in Q_{\tau}'(z) \right\} $  under the coordinate transformation $\Psi$.  For any $ k \in K$, set $\tilde{T}_{k} : \Psi \left( Q_{\tau}(z) \right) \to [0,1]$ as
\begin{equation}\label{PCN330} 
\tilde{T}_{k}(y^{1},y') = \frac{y^{1} - \tilde{\varphi}_{k}( y')}{\tilde{\varphi}_{k+1}(y') - \tilde{\varphi}_{k}(y')} 
~ \text{ in } ~ \Psi \left( Q_{\tau}^{k}(z) \right).
\end{equation}
Then the following vector-valued function $\tilde{\pi} : \Psi \left( Q_{\tau}(z) \right) \to \br^{n}$ 
\begin{equation*}\label{}
\tilde{\pi} = \left( -1,\tilde{\pi}' \right) = \left( \tilde{\pi}_{1}, \tilde{\pi}_{2}, \cdots, \tilde{\pi}_{n} \right)
\end{equation*}
is called the derivative of the naturally induced flow respect to $\Psi$, where $\tilde{\pi}_{1}=-1$ and
\begin{equation}\begin{aligned}\label{PCN340}
\tilde{\pi}_{\alpha}(y) : = D_{y^{\alpha}}\tilde{\varphi}_{k+1}( y') \cdot \tilde{T}_{k}(y)
+ D_{y^{\alpha}}\tilde{\varphi}_{k}(y')  \cdot \left[ 1-\tilde{T}_{k}(y) \right]
~ \text{ in } ~ \Psi \left( Q_{\tau}^{k}(z) \right),
\end{aligned}\end{equation}
for any $k \in K$ and $\alpha \in \{2, \cdots,n \}$.
\end{defn}

To apply the coordinate transformation, we prove the following lemma. We remark that in the following lemma, $\Psi$ only depends on the point $z \in Q_{r}$ and is independent of the size $\tau$.

\begin{lem}\label{PCNS300}
Suppose that $\left( Q_{r}, \left\{ \varphi_{k} : k \in K_{+} \right\} \right)$ is a composite cube with the condition that \begin{equation}\label{PCN230}
|D_{x'}\varphi_{l}(x') - D_{x'}\varphi_{k}(x')|
\leq \kappa \left( \frac{ |\varphi_{l}(x') - \varphi_{k}(x')| }{ R } \right)^{2\mu} 
\quad  \left( x' \in Q_{r}', ~ k,l \in K_{+} \right),
\end{equation}
and
\begin{equation}\label{PCN235}
|D_{x'}\varphi_{k}(x') - D_{x'}\varphi_{k}(y')|
\leq \kappa  \left( \frac{ |x'-y'| }{ R } \right)^{2\mu}
\qqquad  \left( x',y' \in Q_{r}', ~ k \in K_{+} \right),
\end{equation}
for some constant $\kappa>0$. For $Q_{\tau}(z) \subset Q_{r}$ and the derivative of the naturally induced flow $\pi : Q_{r} \to \br^{n}$,  let $\Psi : Q_{\tau}(z) \to \br^{n}$ be a linear coordinate transformation defined as
\begin{equation*}
\Psi(x) = \left( x^{1} - z^{1} - \pi'(z) \cdot (x'-z') , x' - z' \right),
\end{equation*}
and  $\tilde{\pi} : \Psi \left( Q_{\tau}(z) \right) \to \br^{n}$  be the derivative of the naturally induced flow respect to $\Psi$. Then 
\begin{equation*}\label{}
|\tilde{\pi}'(0)|=0
\qquad \text{and} \qquad
|\tilde{\pi}'| \leq c \kappa \left(\frac{\rho}{R}\right)^{2\mu}
\quad \text{in} \quad Q_{\rho}
\end{equation*}
for any $Q_{\rho} \subset \Psi(Q_{\tau}(z))$.
\end{lem}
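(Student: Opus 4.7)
The plan is to explicitly write out the transformed graph functions in terms of the originals, verify by direct computation that $\tilde\pi'(0)=0$, and then invoke the Hölder estimate of Lemma \ref{GSS600} applied to the new configuration. The whole argument is bookkeeping: the coordinate transformation $\Psi$ has been defined precisely so that its linear part cancels $\pi'(z)$.

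First I would observe that the new graph functions take the explicit form
\begin{equation*}
\tilde{\varphi}_{k}(y') = \varphi_{k}(y'+z') - z^{1} - \pi'(z) \cdot y' \qquad (k \in K_{+}),
\end{equation*}
so that $D_{y'}\tilde{\varphi}_{k}(y') = D_{x'}\varphi_{k}(y'+z') - \pi'(z)$ and $\tilde{\varphi}_{l} - \tilde{\varphi}_{k} = \varphi_{l}(\cdot+z') - \varphi_{k}(\cdot+z')$. In particular, the hypotheses \eqref{PCN230} and \eqref{PCN235} transfer verbatim to $\{\tilde{\varphi}_{k}\}$ on any $Q_{\rho}' \subset \Psi(Q_{\tau}(z))'$ with the same constant $\kappa$, and the $L^{\infty}$-norms $\|D_{y'}\tilde{\varphi}_{k}\|$ stay controlled by $2\sup_{k \in K_{+}}\|D_{x'}\varphi_{k}\|_{L^{\infty}(Q_{r}')}$ since $\pi'(z)$ is a convex combination of the $D_{x'}\varphi_{k}(z')$'s.

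Next I would check $\tilde{\pi}'(0) = 0$. Fix $k \in K$ with $z$ in the closure of $Q_{\tau}^{k}(z)$; then $y = 0 = \Psi(z)$ lies in the closure of $\Psi(Q_{\tau}^{k}(z))$. From $\tilde{\varphi}_{k}(0)= \varphi_{k}(z')- z^{1}$ and $\tilde{\varphi}_{k+1}(0)= \varphi_{k+1}(z')- z^{1}$, the definition \eqref{PCN330} gives $\tilde{T}_{k}(0) = T_{k}(z)$. Plugging this into \eqref{PCN340} and using that $D_{y^{\alpha}}\tilde{\varphi}_{k}(0) = D_{\alpha}\varphi_{k}(z') - \pi_{\alpha}(z)$,
\begin{equation*}
\tilde{\pi}_{\alpha}(0) = \bigl[D_{\alpha}\varphi_{k+1}(z') - \pi_{\alpha}(z)\bigr] T_{k}(z) + \bigl[D_{\alpha}\varphi_{k}(z') - \pi_{\alpha}(z)\bigr]\bigl[1 - T_{k}(z)\bigr] = \pi_{\alpha}(z) - \pi_{\alpha}(z) = 0
\end{equation*}
for $\alpha \in \{2,\ldots,n\}$.

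Finally, since $\{\tilde{\varphi}_{k}\}$ satisfies the hypotheses of Lemma \ref{GSS600} on any $Q_{\rho} \subset \Psi(Q_{\tau}(z))$ (applied with the origin at $0$, which is allowed by the translation remark of the previous subsection), I would conclude
\begin{equation*}
|\tilde{\pi}'(y)| = |\tilde{\pi}'(y) - \tilde{\pi}'(0)| \leq c\kappa\left(\frac{|y|}{R}\right)^{2\mu} \leq c\kappa\left(\frac{\rho}{R}\right)^{2\mu}
\end{equation*}
for every $y \in Q_{\rho}$, which is the desired bound. The only genuine point of care is the boundary convention when $z$ lies on a graph, but the remark after Definition \ref{derivative of flow} ensures that $\pi'(z)$ agrees on both sides of an interface, so the cancellation in the computation of $\tilde{\pi}'(0)$ is unambiguous.
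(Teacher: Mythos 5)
Your proposal is correct and follows essentially the same route as the paper: the explicit formula $\tilde{\varphi}_{k}(y') = \varphi_{k}(y'+z') - z^{1} - \pi'(z)\cdot y'$, the cancellation giving $\tilde{\pi}'(0)=0$, and an appeal to Lemma \ref{GSS600}. The only (minor) divergence is in the final step: the paper first proves the identity $\tilde{\pi}_{\alpha}(y) = \pi_{\alpha}\left( \Psi^{-1}(y) \right) - \pi_{\alpha}(z)$ and applies Lemma \ref{GSS600} to the original $\pi'$, which then needs the extra bound $\left| \Psi^{-1}(y)-z \right| \leq c|y|$ coming from $|\pi'(z)| \leq 2 \sup_{k}\|D_{x'}\varphi_{k}\|_{L^{\infty}}$, whereas you check that the hypotheses \eqref{GS400}--\eqref{GS405} transfer verbatim with the same $\kappa$ to the sheared configuration and apply Lemma \ref{GSS600} directly to $\tilde{\pi}'$ — equally valid, and it sidesteps that extra estimate at the harmless cost of noting that the constant in Lemma \ref{GSS600} now depends on $\sup_{k}\| D_{y'}\tilde{\varphi}_{k}\|_{L^{\infty}}$, which you correctly control by the original data.
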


\begin{proof}
Let the new graph $ \left \{ \left( \tilde{\varphi}_{k}(y'), y' \right) : y' \in Q_{\tau}' \right\} $ be the transformation of the graph $ \left \{ \left( \varphi_{k}(x'), x' \right) : x' \in Q_{\tau}'(z) \right\} $ under the linear coordinate transformation  $\Psi$. Then one can check that
\begin{equation}\label{PCN360} 
\tilde{\varphi}_{k}(y') = \varphi_{k} \left( y'+z' \right) - z^{1} -  \pi'(z) \cdot y'.
\end{equation}
We claim that
\begin{equation}\label{PCN370}
\tilde{\pi}_{\alpha}(y) = \pi_{\alpha} \left( y^{1} +  z^{1} + \pi'(z) \cdot y', y' + z' \right) - \pi_{\alpha}(z)
\end{equation}
for any $y  \in \Psi \left( Q_{\tau}(z) \right)$ and $\alpha \in \{2, \cdots, n\} $.
Fix $y = (y^{1},y') \in \Psi \left( Q_{\tau}^{k}(z) \right)$ $(k \in K)$. In view of \eqref{PCN360}, we obtain that
\begin{equation*}\label{}
D_{y^{\alpha}} \tilde{\varphi}_{k}(y') 
= D_{x^{\alpha}} \varphi_{k}(y'+z') - \pi_{\alpha}(z)
\qquad 
\left( y' \in Q_{\tau}', \, k \in K_{+}, \, \alpha \in \{2, \cdots, n\}  \right).
\end{equation*}
So by \eqref{PCN330} and \eqref{PCN340},
\begin{equation*}\begin{aligned}\label{}
\tilde{\pi}_{\alpha}(y) 
& = \frac{[D_{x^{\alpha}}\varphi_{k+1}(y'+ z') - \pi_{\alpha}(z)][y^{1} - \tilde{\varphi}_{k}( y')]}{\tilde{\varphi}_{k+1}(y') - \tilde{\varphi}_{k}(y')} \\
& \qquad +  \frac{[D_{x^{\alpha}}\varphi_{k}(y'+z') - \pi_{\alpha}(z)][ \tilde{\varphi}_{k+1}( y') - y^{1} ]}{\tilde{\varphi}_{k+1}(y') - \tilde{\varphi}_{k}(y')} \\
& = \frac{ D_{x^{\alpha}}\varphi_{k+1}( y'+z') \, [y^{1}-\tilde{\varphi}_{k}( y')] }{\tilde{\varphi}_{k+1}(y') - \tilde{\varphi}_{k}(y')} 
+ \frac{  D_{x^{\alpha}}\varphi_{k}( y'+z') \, [\tilde{\varphi}_{k+1}( y' ) - y^{1}]}{\tilde{\varphi}_{k+1}(y') - \tilde{\varphi}_{k}(y')}  - \pi_{\alpha}(z)
\end{aligned}\end{equation*}
for any $\alpha \in \{ 2, \cdots, n \}$. It follows from \eqref{PCN360} that
\begin{equation}\begin{aligned}\label{PCN390}
\tilde{\pi}_{\alpha}(y) 
& = \frac{ D_{x^{\alpha}}\varphi_{k+1}(y'+z') \left[y^{1} + z^{1} + \pi'(z) \cdot y' - \varphi_{k}(y'+z') \right]   }{ \varphi_{k+1}(y'+z') -  \varphi_{k}(y'+z')} \\
& \quad + \frac{ D_{x^{\alpha}}\varphi_{k}(y'+z') \left[\varphi_{k+1}(y'+z')-y^{1}-z^{1} - \pi'(z) \cdot y' \right]  }{ \varphi_{k+1}(y'+z') -  \varphi_{k}(y'+z')} - \pi_{\alpha}(z)
\end{aligned}\end{equation}
for any $\alpha \in \{ 2, \cdots, n \}$. Since $y \in \Psi \left( Q_{\tau}^{k}(z) \right)$, we have that 
\begin{equation*}
\Psi^{-1}(y) = \left( y^{1} +  z^{1} + \pi'(z) \cdot y', y' + z' \right) \in  Q_{\tau}^{k}(z).
\end{equation*}
Then from \eqref{GS260}, we obtain that
\begin{equation*}\begin{aligned}\label{}
& \pi_{\alpha} \left( y^{1} +  z^{1} + \pi'(z) \cdot y', y' + z' \right) \\
& \quad = \frac{ D_{x^{\alpha}}\varphi_{k+1}(y'+z') \left[ y^{1} + z^{1} + \pi'(z) \cdot y' - \varphi_{k}(y'+z') \right]   }{ \varphi_{k+1}(y'+z') -  \varphi_{k}(y'+z')} \\
& \qquad + \frac{ D_{x^{\alpha}}\varphi_{k}(y'+z') \left[\varphi_{k+1}(y'+z')-y^{1}-z^{1} - \pi'(z) \cdot y' \right]  }{ \varphi_{k+1}(y'+z') -  \varphi_{k}(y'+z')},
\end{aligned}\end{equation*}
for any $\alpha \in \{ 2, \cdots, n \}$. So by \eqref{PCN390},
\begin{equation}\label{PCN394}
\tilde{\pi}_{\alpha}(y) = \pi_{\alpha} \left( y^{1} +  z^{1} + \pi'(z) \cdot y', y' + z' \right) - \pi_{\alpha}(z),
\end{equation}
for any $\alpha \in \{ 2, \cdots, n \}$. Thus 
\begin{equation}\label{PCN395}
\tilde{\pi}_{\alpha}(0)=0
\qquad 
(\alpha \in \{2, \cdots, n \}) .
\end{equation}
Since  $y = (y^{1},y') \in \Psi \left( Q_{\tau}^{k}(z) \right)$ $(k \in K)$ was arbitrary chosen, the claim \eqref{PCN370} holds. By comparing \eqref{PCN230} and \eqref{PCN235} with \eqref{GS400} and \eqref{GS405} respectively, we apply Lemma \ref{GSS600} to $\pi$ and \eqref{PCN370}. Then we obtain from \eqref{PCN394} that
\begin{equation}\begin{aligned}\label{PCN396}
\left| \tilde{\pi}'(y) \right| 
& = \left| \pi' \left( y^{1} +  z^{1} + \pi'(z) \cdot y', y' + z' \right) - \pi'(z) \right| \\
& \leq c \kappa \left( \frac{ \left| \left( y^{1} + \pi'(z) \cdot y', y' \right) \right| }{ R } \right)^{2\mu}
\end{aligned}\end{equation}
for any $y \in \Psi(Q_{\tau}(z))$. From \eqref{GS260}, we have that $|\pi'(z)| \leq 2 \sup_{k \in K_{+}} \left\| D_{x'}\varphi_{k} \right\|_{L^{\infty}(Q_{r}')} \leq c$. So by \eqref{PCN395} and \eqref{PCN396},
\begin{equation*}\label{}
|\tilde{\pi}'(0)|=0
\qquad \text{and} \qquad
|\tilde{\pi}'| 
\leq c \kappa \left( \frac{ \rho }{ R } \right)^{2\mu}
\quad \text{in} \quad Q_{\rho}
\end{equation*}
for any $Q_{\rho} \subset \Psi(Q_{\tau}(z))$.
\end{proof}

\subsection{Piecewise constant coefficients with no decay assumption} 

For the composite cube $\left( Q_{r}, \left\{ \varphi_{k} : k \in K_{+} \right\} \right)$, let $\pi$ be  the derivative of the naturally induced flow $\pi : Q_{r} \to \br^{n}$. Then
\begin{equation*}\label{}
\pi = (-1,\pi') = ( -1, \pi_{2}, \cdots, \pi_{n} )
\end{equation*}
where
\begin{equation*}\begin{aligned}\label{}
\pi_{\alpha}(x) = D_{\alpha}\varphi_{k+1}( x') \cdot T_{k}(x)
+ D_{\alpha}\varphi_{k}( x')  \cdot [1-T_{k}(x)]
\quad \text{ in } \quad Q_{r}^{k},
\end{aligned}\end{equation*}
for any $k \in K$ and  $\alpha \in \{ 2, \cdots, n \}$. We also assume that 
\begin{equation}\label{PCN130}
|D_{x'}\varphi_{l}(x') - D_{x'}\varphi_{k}(x')|
\leq \kappa \left( \frac{ |\varphi_{l}(x') - \varphi_{k}(x')| }{ R } \right)^{2\mu} 
\quad  \left( x' \in Q_{r}, ~ k,l \in K_{+} \right),
\end{equation}
and
\begin{equation}\label{PCN135}
|D_{x'}\varphi_{k}(x') - D_{x'}\varphi_{k}(y')|
\leq \kappa  \left( \frac{ |x'-y'| }{ R } \right)^{2\mu}
\qqquad  \left( x',y' \in Q_{r}', ~ k \in K_{+} \right),
\end{equation}
for some constant $\kappa>0$. 

For this subsection, we employ the letter $c \geq 1$ to denote any constants that can be explicitly computed in terms such as $n$, $N$, $\lambda$, $\Lambda$, $ \sup_{k \in K_{+}} \left\| D_{x'}\varphi_{k} \right\|_{L^{\infty}(Q_{r}')}$, $R^{\gamma} \sup_{k \in K_{+}} \left[ D_{x'}\varphi_{k} \right]_{C^{\gamma}(Q_{r}')}$ and the number of elements in the set $K$.

\sskip

As in Subsection \ref{PCCDA}, we handle the case when the coefficients are piecewise constant. For the constants $A_{ij,k}^{\alpha \beta}, F_{\alpha,k}^{i}$ $(1 \leq \alpha, \beta \leq n, \ 1 \leq i,j \leq N, ~ k \in K)$ satisfying that 
\begin{equation}\label{PCN240} 
\lambda |\xi|^{2} 
\leq A_{ij,k}^{\alpha \beta } \xi_{\alpha}^{i} \xi_{\beta}^{j} 
\qquad \text{and} \qquad
\left| A_{ij,k}^{\alpha \beta} \right| \leq \Lambda 
\qqquad
\left( \xi \in \br^{Nn} \right),
\end{equation}
we define $A_{ij}^{\alpha \beta}, F_{\alpha}^{i}$ $(1 \leq \alpha, \beta \leq n, \ 1 \leq i,j \leq N)$ as 
\begin{equation}\label{PCN250}
A_{ij}^{\alpha \beta}(x) 
= \sum_{k \in K} A_{ij,k}^{\alpha \beta} \chi_{Q_{r}^{k}}
\qquad \text{and} \qquad
F_{\alpha}^{i}(x) 
= \sum_{k \in K} F_{\alpha,k}^{i} \chi_{Q_{r}^{k}}
\quad \text{in} \quad Q_{r}.
\end{equation}
We remark that $A_{ij}^{\alpha \beta }$ and $F_{\alpha}^{i}$ $(1 \leq \alpha, \beta \leq n, \ 1 \leq i,j \leq N )$  are constant in each $Q_{r}^{k}$ $(k \in K)$. Then one can check from \eqref{PCN250} that 
\begin{equation}\label{PCN260} 
\lambda |\xi|^{2} 
\leq A_{ij}^{\alpha \beta }(x) \xi_{\alpha}^{i} \xi_{\beta}^{j} 
\quad \text{and} \quad
\big| A_{ij}^{\alpha \beta}(x) \big| \leq \Lambda 
\qquad \qquad
\left( x \in Q_{r}, ~ \xi \in \br^{Nn} \right).
\end{equation} 

Under the assumption \eqref{PCN130}, \eqref{PCN240} and \eqref{PCN250}, let $v$  be a weak solution of
\begin{equation}\label{PCN270}
D_{\alpha} \left[ A_{ij}^{\alpha \beta}(x) D_{\beta}^{j}v \right]  =  D_{\alpha} F_{\alpha}^{i} ~ \text{ in } ~ Q_{r}.
\end{equation}
Then we define  $V : Q_{r} \to \br^{Nn}$ as $V = \left( V^{1}, \cdots, V^{N} \right)^{T}$ where
\begin{equation}\label{PCN280}
V^{i} = \left( \sum_{1 \leq \alpha \leq n} \pi_{\alpha} \left[ \left( \sum_{1 \leq j \leq N} \sum_{ 1 \leq \beta \leq N } A_{ij}^{\alpha \beta} D_{x^{\beta}} v^{j} \right) - F_{\alpha}^{i} \right], D_{x'}v^{i} + \pi' \, D_{x^{1}} v^{i} \right) 
\end{equation}
for any $1 \leq i \leq N$. In this subsection, we obtain Lipschitz estimate of $V$ in $Q_{\frac{r}{2}}$ and the excess decay estimate of $V$.

\begin{lem}\label{PCNS500}
Suppose that $Q_{\tau}(z) \subset Q_{r}$. There exists a small universal constant $\varepsilon \in (0,1]$ such that if $\kappa \left(\frac{r}{R}\right)^{2\mu} \leq \varepsilon$ and $z$ is a Lebesgue point  of $V$ then
\begin{equation*}\label{}
|V(z)|^{2} \leq c \left[ \mint_{Q_{\tau}(z)} |V|^{2} \, dx +  \| F \|_{L^{\infty}(Q_{\tau}(z))}^{2} \right].
\end{equation*}
Here, $\varepsilon \in (0,1]$ depends only on $n$, $N$, $\lambda$, $\Lambda$, the number of the element in the set $K$, $ \sup_{k \in K_{+}} \left\| D_{x'}\varphi_{k} \right\|_{L^{\infty}(Q_{r}')}$ and $R^{\gamma} \sup_{k \in K_{+}} \left[ D_{x'}\varphi_{k} \right]_{C^{\gamma}(Q_{r}')}$.
\end{lem}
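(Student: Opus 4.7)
The plan is to reduce to Lemma \ref{PCCS900} via the linear coordinate change of Lemma \ref{PCNS300}: after straightening $\pi'(z)$ to zero, the transformed flow derivative $\tilde\pi'$ decays like $(\rho/R)^{2\mu}$, so the ``flat'' functional $\tilde W$ of \eqref{PCC290} obeys an averaged bound; at the Lebesgue point $z$ (which becomes the origin in the new frame) the identity $\tilde\pi'(0)=0$ forces $\tilde W(0)$ to coincide with the transformed $\tilde V(0)=V(z)$, converting the averaged estimate into the desired pointwise one.

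First I would fix $\Psi(x)=(x^1-z^1-\pi'(z)\cdot(x'-z'),\,x'-z')$ and set $\tilde v(y)=v(\Psi^{-1}(y))$. The map $\Psi$ is a unit-Jacobian shear whose norm depends only on $\sup_{k}\|D_{x'}\varphi_k\|_\infty$, so $\tilde v$ solves a linear elliptic system $D_{y^\alpha}[\tilde A^{\alpha\beta}_{ij}(y)D_{y^\beta}\tilde v^j]=D_{y^\alpha}\tilde F^i_\alpha$ with piecewise-constant data satisfying \eqref{PCN260} (with universal constants) and $\|\tilde F\|_\infty\leq c\|F\|_\infty$. Lemma \ref{PCNS300} provides $\tilde\pi'(0)=0$ together with the decay $|\tilde\pi'(y)|\leq c\kappa(\rho/R)^{2\mu}$ on every $Q_\rho\subset\Psi(Q_\tau(z))$. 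Since $\Psi(Q_\tau(z))$ contains a cube $Q_{\tilde\tau}$ with $\tilde\tau\geq c^{-1}\tau$, the hypothesis \eqref{PCC230} of Lemma \ref{PCCS900} is satisfied on $Q_{\tilde\tau}$ with $\nu=c\kappa$.

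Next, choose $\varepsilon$ so small that $\kappa(r/R)^{2\mu}\leq\varepsilon$ forces $\nu(\tilde\tau/R)^{2\mu}$ to lie below the threshold of Lemma \ref{PCCS900}. Defining $\tilde W$ on $Q_{\tilde\tau}$ by the recipe \eqref{PCC290} for the transformed equation and invoking Lemma \ref{PCCS900} yields
\begin{equation*}
\mint_{Q_\rho}|\tilde W|^2\,dy\leq c\Big[\mint_{Q_{\tilde\tau}}|\tilde W|^2\,dy+\|\tilde F\|_\infty^2\Big]\qquad(0<\rho\leq\tilde\tau).
\end{equation*}
Let also $\tilde V$ be defined by \eqref{PCN280} for the transformed data $(\tilde\pi,\tilde A,\tilde F,\tilde v)$. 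The chain rule combined with the transformation laws of $A$ and $F$ yields $\tilde V(y)=V(\Psi^{-1}(y))$ a.e.; at $y=0$ the identity $\tilde\pi'(0)=0$ collapses the $\tilde V$-formula onto the $\tilde W$-formula, giving $\tilde W(0)=\tilde V(0)=V(z)$. For $y\neq 0$ one has the pointwise comparison $|\tilde W-\tilde V|\leq c|\tilde\pi'|(|D\tilde v|+|\tilde F|)$, and Lemma \ref{HODS600} applied to the algebraic identities underlying $\tilde V$ yields $|D\tilde v|\leq c(|\tilde V|+|\tilde F|)$; together with $|\tilde\pi'|\leq c\varepsilon$ on $Q_{\tilde\tau}$ this gives
\begin{equation*}
\mint_{Q_{\tilde\tau}}|\tilde W|^2\,dy\leq c\Big[\mint_{Q_\tau(z)}|V|^2\,dx+\|F\|_\infty^2\Big].
\end{equation*}

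Finally, since $z$ is a Lebesgue point of $V$, the origin is a Lebesgue point of $\tilde V$, and the identity $\tilde W(0)=\tilde V(0)$ transfers the Lebesgue property to $\tilde W$. Sending $\rho\to 0$ in the averaged bound and combining with the previous comparison delivers
\begin{equation*}
|V(z)|^2=|\tilde W(0)|^2\leq c\Big[\mint_{Q_\tau(z)}|V|^2\,dx+\|F\|_\infty^2\Big],
\end{equation*}
which is the statement. The main obstacle is the bookkeeping between the two functionals $\tilde W$ (flat, adapted to Lemma \ref{PCCS900}) and $\tilde V$ (with the variable $\tilde\pi$, which is the pullback of $V$): they coincide only at the single point $y=0$, and the smallness $\kappa(r/R)^{2\mu}\leq\varepsilon$ must be used quantitatively to absorb $\mint|\tilde W-\tilde V|^2$ on the right-hand side without contaminating the leading constant.
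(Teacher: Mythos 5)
Your proposal is correct and follows essentially the same route as the paper: the shear $\Psi$, the decay of $\tilde\pi'$ from Lemma \ref{PCNS300}, the averaged bound from Lemma \ref{PCCS900}, the comparison of the flat functional with the pulled-back one controlled by $|\tilde\pi'|$ and Lemma \ref{HODS600}, and the Lebesgue-point limit $\rho\to 0$. The only cosmetic difference is that you perform the comparison in the $y$-coordinates between $\tilde W$ and $\tilde V=V\circ\Psi^{-1}$, whereas the paper works in the $x$-coordinates with the frozen functional $\bar V$ satisfying $W\circ\Psi=\bar V$ and the bound $|V-\bar V|\le c\,|\pi-\pi(z)|\,(|Dv|+|F|)$; these are the same estimate transported by the unit-Jacobian change of variables.
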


\begin{proof}
Define a linear coordinate transformation $\Psi : Q_{\tau}(z) \to \br^{n}$ with  $\Phi= \Psi^{-1}$ as
\begin{equation}\label{PCN520} 
\Psi(x) = \left( x^{1} - z^{1}  - \pi'(z) \cdot  (x'-z'), x' - z' \right)
\end{equation}
Let $y = \Psi(x)$  be the new coordinate system. Then for any $\alpha, \beta \in \{ 2, \cdots, n \}$,
\begin{equation}\label{PCN530} 
\frac{ \partial y^{1} }{ \partial x^{1} } = 1, 
\qquad
\frac{ \partial y^{1} }{ \partial x^{\beta} } =  - \pi_{\beta}(z),
\qquad
\frac{ \partial y^{\alpha} }{ \partial x^{1} } 
= 0,
\qquad
\frac{ \partial y^{\alpha} }{ \partial x^{\beta} } = \delta_{\alpha \beta},
\end{equation}
and
\begin{equation}\label{PCN540} 
\frac{ \partial x^{1} }{ \partial y^{1} } = 1, 
\qquad
\frac{ \partial x^{1} }{ \partial y^{\beta} } =  \pi_{\beta}(z),
\qquad
\frac{ \partial x^{\alpha} }{ \partial y^{1} } 
= 0,
\qquad
\frac{ \partial x^{\alpha} }{ \partial y^{\beta} } = \delta_{\alpha \beta}.
\end{equation}
From \eqref{GS260}, we have that $|\pi'(z)| \leq 2 \sup_{k \in K_{+}} \left\| D_{x'}\varphi_{k} \right\|_{L^{\infty}(Q_{r}')} $. So for a sufficiently small constant $\delta = \delta \left( n,\sup_{k \in K_{+}} \left\| D_{x'}\varphi_{k} \right\|_{L^{\infty}(Q_{r}')} \right) \in (0,1]$, we have from \eqref{PCN520} that 
\begin{equation}\label{PCN550}
Q_{\delta \rho} \subset \Psi(Q_{\rho}(z))
\qquad \text{and} \qquad
Q_{\delta \rho}(z) \subset \Phi(Q_{\rho})
\qqquad 
\big( \rho \in (0,\tau] \big).
\end{equation}
One can check from \eqref{PCN270} and \eqref{PCN550} that 
\begin{equation}\label{PCN560} 
D_{y^{\alpha}} \left[ \tilde{A}_{ij}^{\alpha \beta}(y) D_{y^{\beta}} w \right] = D_{y^{\alpha}} G_{\alpha}^{i} 
\qquad \text{in} \qquad Q_{\delta \tau} \subset \Psi \left( Q_{\tau}(z) \right)   ,
\end{equation}
where 
\begin{equation}\label{PCN570} 
\tilde{A}_{ij}^{\alpha \beta}  = \sum_{s,t} \frac{ \partial y^{\alpha} }{ \partial x^{s} } \frac{ \partial y^{\beta} }{ \partial x^{t} } \cdot A_{ij}^{st},
\ \ 
D_{y^{\beta}}w = \sum_{t} \frac{ \partial x^{t} }{ \partial y^{\beta} } \cdot D_{x^{t}}v
\ \ \text{and} \ \
G_{\alpha}^{i} = \sum_{ s } \frac{ \partial y^{\alpha} }{ \partial x^{s} } \cdot F_{s}^{i},
\end{equation}
for any  $1 \leq \alpha, \beta \leq n$ and $1 \leq i,j \leq N$. In view of \eqref{PCN250} and  \eqref{PCN260}, 
\begin{equation}\label{PCN580}
\tilde{A}_{ij}^{\alpha \beta}(y) 
= \sum_{k \in K} \sum_{1 \leq s,t \leq n } \frac{ \partial y^{\alpha} }{ \partial x^{s} } \frac{ \partial y^{\beta} }{ \partial x^{t} } \cdot A_{ij,k}^{st} \chi_{\Psi(Q_{\tau}^{k}(z))}
\quad \text{in} \quad \Psi(Q_{\tau}(z)),
\end{equation}
and
\begin{equation}\label{PCN585}
G_{\alpha}^{i}(y) 
= \sum_{k \in K} \sum_{1 \leq s \leq n} \frac{ \partial y^{\alpha} }{ \partial x^{s} } \cdot F_{s,k}^{i} \chi_{\Psi(Q_{\tau}^{k}(z))}
\quad \text{in} \quad \Psi(Q_{\tau}(z)).
\end{equation}
Since $\Psi$ is a linear coordinate transformation, $\frac{\partial y}{\partial x}$ and $\frac{\partial x}{\partial y}$ are constant as in \eqref{PCN530} and \eqref{PCN540}. So by \eqref{PCN580} and \eqref{PCN585}, one can check that $\tilde{A}_{ij}^{\alpha \beta }$ and $\tilde{F}_{\alpha}^{i}$ $(1 \leq \alpha, \beta \leq n, \, 1 \leq i,j \leq N )$ become constants in each $Q_{\delta \tau} \cap \Psi \left( Q_{\tau}^{k}(z) \right)$ for any $k \in K$. Also one can check from \eqref{PCN240}, \eqref{PCN530}, \eqref{PCN540} and \eqref{PCN550} that 
\begin{equation}\label{PCN590} 
c^{-1} |\xi|^{2} 
\leq \tilde{A}_{ij}^{\alpha \beta }(y) \xi_{\alpha}^{i} \xi_{\beta}^{j} 
\qquad \text{and} \qquad
\big| \tilde{A}_{ij}^{\alpha \beta}(y) \big| \leq c
\qquad
\left( y \in Q_{\delta \tau}, ~ \xi \in \br^{Nn} \right),
\end{equation} 
and
\begin{equation}\label{PCN595} 
\big\| G \big\|_{L^{\infty}(Q_{\delta \tau})}
\leq  \big\| F \big\|_{L^{\infty}(Q_{\tau}(z))} .
\end{equation}
Let $\tilde{\pi} : \Psi \left( Q_{\tau}(z) \right) \to \br^{n-1}$  be the derivative of the naturally induced flow respect to $\Psi$ defined in Definition \ref{new derivative of flow}.  In view of Lemma \ref{PCNS300} and \eqref{PCN550}, we obtain that
\begin{equation}\label{PCN600}
|\tilde{\pi}'(0)|=0
\qquad \text{and} \qquad
|\tilde{\pi}'| \leq c \kappa \left( \frac{ \rho }{ R } \right)^{2\mu}
\quad \text{in} \quad Q_{\rho} 
\left( \subset \Psi(Q_{\tau}(z)) \right)
\end{equation}
for any $0 < \rho \leq \delta \tau$, which corresponds to the condition \eqref{PCC230} used for Lemma \ref{PCCS900}. 

\medskip

Set $W : \Psi( Q_{r}(z) ) \to \br^{Nn}$ as $W = \left( W^{1} , \cdots, W^{N} \right)^{T}$ where
\begin{equation}\label{PCN610} 
W^{i} = \left( \left[ -\sum_{ 1 \leq j \leq N} \sum_{1 \leq \beta \leq n} \tilde{A}_{ij}^{1\beta} D_{y^{\beta}} w^{j} \right] + G_{1}^{i}, D_{y'} w^{i} \right)
\qquad \left( i = 1, \cdots, N \right).
\end{equation}
By comparing  \eqref{PCN590} and \eqref{PCN600} with  \eqref{PCC240} and \eqref{PCC230} respectively, we apply Lemma \ref{PCCS900} to the size $\delta^{-1}\rho$ and $\delta \tau$. So there exists a small universal constant $\varepsilon \in (0,1]$ such that if $\kappa \left(\frac{r}{R}\right)^{2\mu} \leq \varepsilon$ then
\begin{equation}\label{PCN620}
\mint_{Q_{\delta^{-1}\rho}(z)} |W|^{2} \, dx 
\leq c \left[ \mint_{Q_{\delta \tau}(z)} |W|^{2} \, dx +  \| G \|_{L^{\infty}(Q_{\delta \tau}(z))}^{2} \right],
\end{equation}
for any $0 < \rho \leq \delta^{2} \tau \leq \delta^{2} r$. Set $\bar{V} : Q_{\tau}(z) \to \br^{Nn}$ as $\bar{V} = \left( \bar{V}^{1} , \cdots, \bar{V}^{N} \right)^{T}$ where
\begin{equation*}\label{}
\bar{V}^{i} = \left( \sum_{1 \leq \alpha \leq N}  \pi_{\alpha}(z) \left[ \left( \sum_{1 \leq j \leq N} \sum_{1 \leq \beta \leq n} A_{ij}^{\alpha \beta} D_{x^{\beta}} v^{j} \right) - F_{\alpha}^{i} \right], D_{x'}v^{i} + \pi'(z) \, D_{x^{1}} v^{i} \right)
\end{equation*}
for any $1 \leq i \leq N$. 

By comparing \eqref{PCN130} and \eqref{PCN135} with \eqref{GS400} and \eqref{GS405} respectively, we have from Lemma \ref{GSS600} that $\pi = (-1,\pi') \in C^{2\mu}(Q_{r})$. So for $V$ in  \eqref{PCN280}, we have from Lemma \ref{HODS600} that
\begin{equation*}\label{}
\left| V - \bar{V} \right|
\leq c |\pi-\pi(z)|  \Big[ |D_{x}v| + |F| \Big]
\leq c\kappa \left( \frac{ r }{ R } \right)^{2\mu}  \Big[ |V| + |F| \Big]
\quad \text{ in } Q_{r}.
\end{equation*}
So for a sufficiently small universal constant $\varepsilon \in (0,1]$, if $\kappa \left(\frac{r}{R}\right)^{2\mu} \leq \varepsilon$ then
\begin{equation}\label{PCN636}
\left| \bar{V} \right| + |F|
\leq c \left[ \left| V \right| + |F| \right]
\leq c \left[ \left| \bar{V} \right| + |F| \right] 
\quad \text{ in } \quad Q_{r}.
\end{equation} 
Since $\pi_{1}=-1$, one can check from \eqref{PCN530} and \eqref{PCN540} that 
\begin{equation*}\label{} 
\bar{V}_{1}^{i} = \left( - \sum_{1 \leq j \leq N} \sum_{1 \leq s,t \leq n } \frac{ \partial y^{1} }{ \partial x^{s} } \cdot A_{ij}^{ s t}  D_{x^{t}}v^{j} \right)
+ \sum_{1 \leq s \leq n} \frac{ \partial y^{1} }{ \partial x^{s} } \cdot F_{s}^{i},
\end{equation*}
and
\begin{equation*}\label{} 
\bar{V}_{\alpha}^{i} = 
\sum_{1 \leq s \leq n } \frac{ \partial x^{s} }{ \partial y^{\beta} } \cdot D_{x^{s}}v^{i},
\end{equation*}
for any $i = 1, \cdots, N$ and $\beta = 2, \cdots, n$.  So by comparing $\bar{V}$ with $W$ in \eqref{PCN610}, we find from \eqref{PCN570}  that 
\begin{equation}\label{PCN650}
W\left( \Psi(x) \right) = \bar{V}(x)
\qquad \qquad \left( x \in Q_{\tau}(z) \right).
\end{equation}
Since $\det \left( \frac{ \partial y}{\partial x} \right) =1$, we find from \eqref{PCN550}, \eqref{PCN595} and \eqref{PCN650} that
\begin{equation*}\begin{aligned}
\mint_{Q_{\rho}(z)} |\bar{V}|^{2} \, dx
\leq c \mint_{\Psi(Q_{\rho}(z))} |W|^{2} \, dy
\leq c \mint_{Q_{\delta^{-1}\rho}} |W|^{2} \, dy,
\end{aligned}\end{equation*}
and
\begin{equation*}\begin{aligned}
\mint_{Q_{\delta \tau}} |W|^{2} \, dy +  \| G \|_{L^{\infty}(Q_{\delta \tau}(z))}^{2} 
& \leq c \left[ \mint_{\Phi(Q_{\delta \tau})} \left| \bar{V} \right|^{2} \, dx +  \| F \|_{L^{\infty}(Q_{\tau}(z))}^{2}  \right] \\
& \leq c \left[  \mint_{Q_{\tau}(z)} \left| \bar{V} \right|^{2} \, dx +  \| F \|_{L^{\infty}(Q_{\tau}(z))}^{2} \right],
\end{aligned}\end{equation*}
for any $0 < \rho \leq \delta^{2} \tau$. So by combining \eqref{PCN620} and the above two estimates,
\begin{equation}\label{PCN670}
\mint_{Q_{\rho}(z)} |\bar{V}|^{2} \, dx
\leq c \left[  \mint_{Q_{\tau}(z)} \left| \bar{V} \right|^{2} \, dx +  \| F \|_{L^{\infty}(Q_{\tau}(z))}^{2} \right].
\end{equation}
for any $0 < \rho \leq \delta^{2} \tau$. It follows from \eqref{PCN636} that
\begin{equation*}\label{}
\mint_{Q_{\rho}(z)} \left| V \right|^{2} \, dx
\leq c \left[  \mint_{Q_{\tau}(z)} \left| V \right|^{2} \, dx +  \| F \|_{L^{\infty}(Q_{\tau}(z))}^{2} \right],
\end{equation*}
for any $0 < \rho \leq \delta^{2} \tau$. Since $0 < \rho \leq \delta^{2} \tau$ was arbitrary chosen and  $z$ is Lebesgue point of $W$, the lemma follows.
\end{proof}

\begin{lem}\label{PCNS700}
For the small universal constant $\varepsilon \in (0,1]$ chosen in Lemma \ref{PCNS500}, if $\kappa \left( \frac{ r }{ R } \right)^{2\mu} \leq \varepsilon$ then
\begin{equation*}\label{}
\| V \|_{L^{\infty}\left( Q_{\frac{r}{2}} \right)}  \leq c \left[ \mint_{Q_{r}} |V|^{2} \, dx +  \| F \|_{L^{\infty}(Q_{r})}^{2} \right].
\end{equation*}
\end{lem}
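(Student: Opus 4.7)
The plan is to reduce the $L^{\infty}$ bound on $Q_{r/2}$ to the pointwise estimate already proved in Lemma \ref{PCNS500}. Since $V \in L^{2}(Q_{r})$, almost every point of $Q_{r/2}$ is a Lebesgue point of $V$, so it suffices to establish the claimed bound at each such $z \in Q_{r/2}$ and then take the essential supremum.

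First I would fix a Lebesgue point $z \in Q_{r/2}$ of $V$ and apply Lemma \ref{PCNS500} with the choice $\tau = r/2$, which is legitimate because $Q_{r/2}(z) \subset Q_{r}$ whenever $z \in Q_{r/2}$. The smallness hypothesis required by Lemma \ref{PCNS500} is $\kappa(\tau/R)^{2\mu} \leq \varepsilon$; since $\tau = r/2 \leq r$ and $\kappa(r/R)^{2\mu} \leq \varepsilon$ by assumption, this is automatic. Lemma \ref{PCNS500} then yields
\begin{equation*}
|V(z)|^{2} \leq c \left[ \mint_{Q_{r/2}(z)} |V|^{2} \, dx + \| F \|_{L^{\infty}(Q_{r/2}(z))}^{2} \right].
\end{equation*}

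Next I would control each right-hand side term by its counterpart on $Q_{r}$. Using $Q_{r/2}(z) \subset Q_{r}$ and the ratio of volumes,
\begin{equation*}
\mint_{Q_{r/2}(z)} |V|^{2} \, dx \leq \frac{|Q_{r}|}{|Q_{r/2}|} \mint_{Q_{r}} |V|^{2} \, dx = 2^{n} \mint_{Q_{r}} |V|^{2} \, dx,
\end{equation*}
and trivially $\| F \|_{L^{\infty}(Q_{r/2}(z))} \leq \| F \|_{L^{\infty}(Q_{r})}$. Combining these gives
\begin{equation*}
|V(z)|^{2} \leq c \left[ \mint_{Q_{r}} |V|^{2} \, dx + \| F \|_{L^{\infty}(Q_{r})}^{2} \right]
\end{equation*}
at every Lebesgue point $z \in Q_{r/2}$. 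Taking the essential supremum over $z \in Q_{r/2}$ then delivers the stated $L^{\infty}$ bound.

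There is no real obstacle here: all the work has been done in Lemma \ref{PCNS500}, and this lemma is essentially a packaging step. The only points that require care are verifying that the cube $Q_{r/2}(z)$ fits inside the reference cube $Q_{r}$ for every $z \in Q_{r/2}$ (so that the piecewise-constant structure and the hypotheses on $\varphi_{k}$, $A_{ij,k}^{\alpha\beta}$, $F_{\alpha,k}^{i}$ are inherited), and that the smallness condition $\kappa(\tau/R)^{2\mu} \leq \varepsilon$ is not weakened when we halve the radius, both of which are immediate.
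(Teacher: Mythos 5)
Your proposal is correct and follows essentially the same route as the paper: apply the pointwise bound of Lemma \ref{PCNS500} at an arbitrary Lebesgue point $z \in Q_{r/2}$ with $\tau = r/2$, enlarge the averaged integral from $Q_{r/2}(z)$ to $Q_{r}$, and take the essential supremum. Your version is in fact slightly more careful than the paper's, which passes to $\mint_{Q_{r}(z)}$ rather than using the containment $Q_{r/2}(z) \subset Q_{r}$ and the volume ratio as you do.
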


\begin{proof}
For any Lebesgue point $z \in Q_{\frac{r}{2}}$ of $V$, we have from Lemma \ref{PCNS500} that
\begin{equation*}\label{}
|V(z)|^{2} \leq c \left[ \mint_{Q_{\frac{r}{2}}(z)} |V|^{2}  dx +  \| F \|_{L^{\infty}(Q_{\frac{r}{2}}(z))}^{2} \right]
\leq c \left[ \mint_{Q_{r}(z)} |V|^{2} \, dx +  \| F \|_{L^{\infty}(Q_{r}(z))}^{2} \right].
\end{equation*}
Since the Lebesgue point $z \in Q_{\frac{r}{2}}$ of $V$ was chosen arbitrary, the lemma follows.
\end{proof}

\begin{lem}\label{PCNS800}
For the small universal constant $\varepsilon \in (0,1]$ chosen in Lemma \ref{PCNS500}, if $Q_{\tau}(z) \subset Q_{r}$ and $\kappa \left( \frac{ r }{ R } \right)^{2\mu} \leq \varepsilon$ then
\begin{equation*}\begin{aligned}\label{} 
\mint_{ Q_{\rho}(z) } \left| V  - \left( V \right)_{ Q_{\rho}(z) } \right|^{2} \, dx
& \leq c  \left( \frac{\rho}{\tau} \right) 
\mint_{ Q_{\tau }(z) } \left| V - \left( V \right)_{ Q_{\tau}(z) } \right|^{2} \, dx \\
& \quad + c \kappa  \left( \frac{\tau}{R}\right)^{2\mu}  \left( \frac{ \tau }{\rho} \right)^{n} \left( \mint_{ Q_{\tau}(z) } | V |^{2} \, dx 
+   \| F \|_{L^{\infty}( Q_{\tau}(z) )}^{2}  \right) ,
\end{aligned}\end{equation*}
for any $0 < \rho \leq \tau$. 
\end{lem}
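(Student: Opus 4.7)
The strategy is to reduce to the decay case of Subsection \ref{PCCDA} via a linear coordinate transformation centered at $z$, apply Lemma \ref{PCCS800} with exponent $\sigma = \infty$ (where the required $L^{\infty}$ bound for $V$ is supplied by Lemma \ref{PCNS500}), and then transfer the resulting decay for $W$ back to $V$. First, I would introduce the linear coordinate transformation $\Psi : Q_{\tau}(z) \to \br^{n}$ defined by $\Psi(x) = (x^{1} - z^{1} - \pi'(z)\cdot(x'-z'),\, x'-z')$ and set $w = v \circ \Psi^{-1}$, exactly as in the proof of Lemma \ref{PCNS500}. Then $w$ solves \eqref{PCN560} with piecewise constant coefficients $\tilde{A}_{ij}^{\alpha\beta}$, constant on each $\Psi(Q_{\tau}^{k}(z))$ and satisfying the ellipticity \eqref{PCN590}, and by Lemma \ref{PCNS300} the derivative of the naturally induced flow with respect to $\Psi$ verifies $|\tilde{\pi}'(0)| = 0$ together with $|\tilde{\pi}'| \leq c\kappa(s/R)^{2\mu}$ on every $Q_{s} \subset \Psi(Q_{\tau}(z))$. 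Thus the decay hypothesis \eqref{PCC230} holds with $\nu = c\kappa$, and there is a universal $\delta \in (0,1]$ with $Q_{\delta s} \subset \Psi(Q_{s}(z))$ and $\Psi^{-1}(Q_{s}) \subset Q_{\delta^{-1}s}(z)$ for every $s \in (0,\tau]$.

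Second, to apply Lemma \ref{PCCS800} with $\sigma = \infty$ I need $Dw \in L^{\infty}(Q_{\delta\tau})$. Applying the pointwise estimate of Lemma \ref{PCNS500} at each Lebesgue point of $V$ in $Q_{\tau/2}(z)$ bounds $\|V\|^{2}_{L^{\infty}(Q_{\tau/2}(z))}$ by $c\,[\mint_{Q_{\tau}(z)}|V|^{2}\,dx + \|F\|^{2}_{L^{\infty}(Q_{\tau}(z))}]$. Combined with the identity $W\circ\Psi = \bar{V}$ (where $\bar{V}$ is obtained from \eqref{PCN280} by freezing $\pi'$ at the value $\pi'(z)$) and the smallness estimate $|V - \bar{V}| \leq c\kappa(r/R)^{2\mu}(|V|+|F|) \leq c(|V|+|F|)$ from Lemma \ref{HODS600}, this yields an $L^{\infty}$ bound for $W$ on $Q_{\delta\tau}$; in turn the componentwise inequality $|Dw| \leq c(|W|+|G|)$ supplies the desired $L^{\infty}$ bound for $Dw$.

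Third, for $\rho \leq \delta^{2}\tau$, Lemma \ref{PCCS800} applied with $\sigma = \infty$, $\nu = c\kappa$, outer scale $\delta\tau$ and inner scale $\delta^{-1}\rho$ produces a decay estimate for $W$ whose error term is precisely of the form $c\kappa(\tau/R)^{2\mu}(\tau/\rho)^{n}\bigl[\|Dw\|^{2}_{L^{\infty}(Q_{\delta\tau})} + \|G\|^{2}_{L^{\infty}}\bigr]$. Transferring via the unit-Jacobian map $\Psi$, using the inclusions $\Psi(Q_{\rho}(z)) \subset Q_{\delta^{-1}\rho}$ and $Q_{\delta\tau} \subset \Psi(Q_{\tau}(z))$ together with a Poincar\'{e}-type bound on the resulting annular pieces, yields the analogous decay for $\bar{V}$ on the original cubes $Q_{\rho}(z)$ and $Q_{\tau}(z)$; the residual contribution from replacing $\bar{V}$ by $V$, bounded by $c\kappa^{2}(\tau/R)^{4\mu}\mint_{Q_{\tau}(z)}(|V|^{2}+|F|^{2})\,dx$, is absorbed into the second term of the claim since $\kappa(r/R)^{2\mu} \leq \varepsilon \leq 1$. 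The remaining range $\rho > \delta^{2}\tau$ is trivial, as $\rho/\tau \geq \delta^{2}$ is a universal constant and the left-hand side is controlled by $|V|^{2}$ directly. The main obstacle is to keep the $\kappa$-power linear and the decay exponent exactly $2\mu$ in the final bound; this is precisely what forces the $\sigma = \infty$ route via the $L^{\infty}$ ingredient of Lemma \ref{PCNS500}, rather than a Gehring-type higher integrability with some small $\sigma$, which would leave fractional powers $\kappa^{\sigma/(2+\sigma)}$ and $(\tau/R)^{2\mu\sigma/(2+\sigma)}$ in the estimate.
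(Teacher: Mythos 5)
Your proposal is correct and follows essentially the same route as the paper's own proof: the linear transformation $\Psi$ centered at $z$, Lemma \ref{PCNS300} to recover the decay hypothesis \eqref{PCC230} with $\nu = c\kappa$, the $L^{\infty}$ bound from Lemma \ref{PCNS500} (the paper packages this as Lemma \ref{PCNS700}) to justify $\sigma=\infty$ in Lemma \ref{PCCS800}, the identity $W\circ\Psi=\bar V$ with unit Jacobian to transfer the excess decay, and the pointwise comparison $|V-\bar V|\leq c\kappa(\tau/R)^{2\mu}(|V|+|F|)$ to absorb the residual. No gaps; your explicit $\kappa^{2}(\tau/R)^{4\mu}$ accounting of the $V$-versus-$\bar V$ replacement is if anything slightly more careful than the paper's.
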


\begin{proof}
Assume that $0 < 2\rho \leq \delta^{2} \tau$, other-wise the lemma can be easily proved by that $\delta^{2}\tau < 2\rho \leq 2\tau$.

By Lemma \ref{PCNS700}, if $\kappa \left( \frac{ r }{ R } \right)^{2\mu} \leq \varepsilon$ then
\begin{equation*}\label{}
|V| \in L^{\infty}\left( Q_{\frac{r}{2}} \right)
\end{equation*}
Set $W : \Psi( Q_{r}(z) ) \to \br^{Nn}$ as in \eqref{PCN610} where $W = \left( W^{1} , \cdots, W^{N} \right)^{T}$ and
\begin{equation}\label{PCN910} 
W^{i} = \left( \left[ -\sum_{ 1 \leq j \leq N} \sum_{1 \leq \beta \leq n} \tilde{A}_{ij}^{1\beta} D_{y^{\beta}} w^{j} \right] + G_{1}^{i}, D_{y'} w^{i} \right)
\qquad \left( i = 1, \cdots, N \right).
\end{equation}
With \eqref{PCN280}, we obtain from \eqref{PCN570} and Lemma \ref{HODS600} that
\begin{equation*}\begin{aligned}
|W(y)| 
& \leq c \left[  |Dw(y)| + |G(y)| \right] \\
& \leq c \left[ \left| Dv \left( \Phi(y) \right) \right|  + \left| F \left( \Phi(y) \right) \right|  \right]
\leq c \left[ \left| V \left( \Phi(y) \right) \right| + \left| F \left( \Phi(y) \right) \right|  \right]
\end{aligned}\end{equation*}
for any $y \in \Psi \left( Q_{r} \right)$. So by Lemma \ref{PCNS700} and \eqref{PCN550},
\begin{equation}\label{PCN920}
W \in L^{\infty} \left( Q_{ \frac{ \delta r}{2}} \right).
\end{equation}
So by comparing  \eqref{PCN590} and \eqref{PCN600} with  \eqref{PCC240} and \eqref{PCC230} respectively, we apply Lemma \ref{PCCS800} with \eqref{PCN920} (take $\sigma = \infty$ in Lemma \ref{PCCS800}) to the size $\delta^{-1}\rho$ and $\frac{ \delta \tau }{2}$ instead of $\rho$ and $\theta$. Then
\begin{equation}\begin{aligned}\label{PCN930} 
& \mint_{Q_{\delta^{-1}\rho}} \left| W  - (W)_{Q_{\delta^{-1}\rho}} \right|^{2} \, dy \\
& \quad \leq c  \left( \frac{\rho}{\tau} \right)
\mint_{Q_{\delta \tau}} \left| W - (W)_{Q_{\delta \tau}} \right|^{2} \, dy \\
&\qquad + c \kappa \left( \frac{\tau}{R}\right)^{2\mu} \left( \frac{\tau}{\rho} \right)^{n} \left( \mint_{Q_{\delta \tau}} | W |^{2} \, dy 
+ \| G \|_{L^{\infty}(Q_{\delta \tau})}^{2} \right) 
\end{aligned}\end{equation}
for any $0 < \rho \leq \frac{ \delta^{2} \tau }{2} \leq \frac{\delta^{2} r}{4}$. By repeating the proof of \eqref{PCN650} in Lemma \ref{PCNS500}, for $V : Q_{\tau}(z) \to \br^{Nn}$ defined as $V = \left( V^{1} , \cdots, V^{N} \right)^{T}$ and
\begin{equation*}\label{}
\bar{V}^{i} = \left( \sum_{1 \leq \alpha \leq N}  \pi_{\alpha}(z) \left[ \left( \sum_{1 \leq j \leq N} \sum_{1 \leq \beta \leq n} A_{ij}^{\alpha \beta} D_{x^{\beta}} v^{j} \right) - F_{\alpha}^{i} \right], D_{x'}v^{i} + \pi'(z) \, D_{x^{1}} v^{i} \right)
\end{equation*}
where $1 \leq i \leq N$, we have that  
\begin{equation}\label{PCN950}
W\left( \Psi(x) \right) = \bar{V}(x)
\qquad \qquad \left( x \in Q_{\tau}(z)  \right).
\end{equation}
Also by repeating the proof of \eqref{PCN595} and \eqref{PCN636} in Lemma \ref{PCNS500}, we obtain that
\begin{equation}\label{PCN955} 
\big\| G \big\|_{L^{\infty}(Q_{\delta \tau})}
\leq  \big\| F \big\|_{L^{\infty}(Q_{\tau}(z))},
\end{equation}
and
\begin{equation}\label{PCN960}
\left| \bar{V} \right| +|F|
\leq c \left[ \left| V \right| + |F| \right]
\leq c \left[ \left| \bar{V} \right| + |F| \right] 
\quad \text{ in } Q_{r}.
\end{equation}
Since $\det \left( \frac{ \partial y}{\partial x} \right) =1$ and $\delta \in (0,1]$ is universal, one can check from \eqref{PCN950} that 
\begin{equation*}\begin{aligned}
\mint_{Q_{\rho}(z) } \left| \bar{V} - \left( \bar{V} \right)_{Q_{\rho}(z) } \right|^{2}  \, dx
& \leq c\mint_{\Psi(Q_{\rho}(z) )} \left| W - \left( W \right)_{\Psi(Q_{\rho}(z) )} \right|^{2}  \, dy \\
& \leq 
c \mint_{Q_{\delta^{-1}\rho}} \left| W - \left( W \right)_{Q_{\delta^{-1}\rho}} \right|^{2}  \, dy
\end{aligned}\end{equation*}
and
\begin{equation*}\begin{aligned}
\mint_{Q_{\delta \tau}} \left| W - \left( W \right)_{Q_{\delta \tau}}  \right|^{2} \, dy
& \leq c \mint_{\Psi( Q_{\tau}(z)  )} \left| W - \left( W \right)_{ \Psi(Q_{\tau}(z) ) } \right|^{2} \, dy \\
& \leq c \mint_{ Q_{\tau}(z)  } \left| \bar{V} - \left( \bar{V} \right)_{ Q_{\tau}(z)  } \right|^{2} \, dx.
\end{aligned}\end{equation*}
So it follows from \eqref{PCN930}, \eqref{PCN955} and \eqref{PCN960} that 
\begin{equation*}\begin{aligned}\label{} 
\mint_{ Q_{\rho}(z) } \left| V  - \left( V \right)_{ Q_{\rho}(z) } \right|^{2} \, dx &  \leq c  \left( \frac{\rho}{\tau} \right)
\mint_{ Q_{\tau }(z) } \left| V - \left( V \right)_{ Q_{\tau}(z) } \right|^{2} \, dx \\
& \quad+ c\kappa  \left( \frac{\tau}{R}\right)^{2\mu}  \left( \frac{ \tau }{\rho} \right)^{n} \left( \mint_{ Q_{\tau}(z) } | V |^{2} \, dx 
+ \| F \|_{L^{\infty}( Q_{\tau}(z) )}^{2}  \right),
\end{aligned}\end{equation*}
for any $0 < \rho \leq \frac{ \delta^{2} \tau }{2} \leq \frac{\delta^{2} r}{4}$. One can easily extend this estimate to the case when $ \frac{r}{2} < \tau \leq r$.
\end{proof}

\subsection{Piecewise H\"{o}lder continuous coefficients with no decay assumptions}

In this subsection, we obtain the corresponding result to Lemma \ref{PCNS800} for general piecewise H\"{o}lder continuous coefficients.

For the composite cube $\left( Q_{r}, \left\{ \varphi_{k} : k \in K_{+} \right\} \right)$, let $\pi$ be  the derivative of the naturally induced flow $\pi : Q_{r} \to \br^{n}$. Then
\begin{equation*}\label{}
\pi = (-1,\pi') = ( -1, \pi_{2}, \cdots, \pi_{n} )
\end{equation*}
where
\begin{equation*}\begin{aligned}\label{}
\pi_{\alpha}(x) = D_{\alpha}\varphi_{k+1}( x') \cdot T_{k}(x)
+ D_{\alpha}\varphi_{k}( x')  \cdot [1-T_{k}(x)]
\quad \text{ in } \quad Q_{r}^{k},
\end{aligned}\end{equation*}
for any $k \in K$ and  $\alpha \in \{ 2, \cdots, n \}$. Assume \eqref{ell1}, \eqref{ell2} and that $A_{ij}^{\alpha \beta}, F_{\alpha}^{i} \in C^{\mu} \left( Q_{r}^{k} \right)$ for any $k \in K$. Also we further assume that 
\begin{equation}\label{PHN230}
|D_{x'}\varphi_{l}(x') - D_{x'}\varphi_{k}(x')|
\leq \kappa \left( \frac{ |\varphi_{l}(x') - \varphi_{k}(x')| }{ R } \right)^{2\mu} 
\quad    \left( x' \in Q_{r}, ~ k,l \in K_{+} \right),
\end{equation}
\begin{equation}\label{PHN235}
|D_{x'}\varphi_{k}(x') - D_{x'}\varphi_{k}(y')|
\leq \kappa  \left( \frac{ |x'-y'| }{ R } \right)^{2\mu}
\qqquad  \left( x',y' \in Q_{r}', ~ k \in K_{+} \right),
\end{equation}
and
\begin{equation}\label{PHN240}
R^\mu \left[ A_{ij}^{\alpha \beta} \right]_{C^{\mu}(Q^k_{r})}
\leq \kappa^{\frac{1}{2} }
\qquad (1 \leq \alpha, \beta \leq n, \ 1 \leq i,j \leq N, \ k \in K)
\end{equation}
for some constant $\kappa>0$.

For this subsection, we employ the letter $c \geq 1$ to denote any constants that can be explicitly computed in terms such as $n$, $N$, $\lambda$, $\Lambda$, $ \sup_{k \in K_{+}} \left\| D_{x'}\varphi_{k} \right\|_{L^{\infty}(Q_{r}')}$, $R^{\gamma} \sup_{k \in K_{+}} \left[ D_{x'}\varphi_{k} \right]_{C^{\gamma}(Q_{r}')}$  and the number of the element in the set $K$.

\smallskip

Let $u$ be a weak solution of
\begin{equation}\label{PHN300}
D_{\alpha} \left[ A_{ij}^{\alpha \beta} D_{\beta}u^{j} \right]  =  D_{\alpha} F_{\alpha}^{i} ~ \text{ in } ~ Q_{r}.
\end{equation}
Then we obtain the following lemma.

\begin{lem}\label{PHNS400}
For the small universal constant $\varepsilon \in (0,1]$ chosen in Lemma \ref{PCNS500}, if $\kappa \left( \frac{ r }{ R } \right)^{2\mu} \leq \varepsilon$ then for $U : Q_{r} \to \br^{Nn}$ defined as $U = \left( U^{1}, \cdots, U^{N} \right)^{T}$ and
\begin{equation}\label{PHN410} 
U^{i} = \left( \sum_{1 \leq \alpha \leq n } \pi_{\alpha} \left[ \left( \sum_{1 \leq j \leq N} \sum_{1 \leq \beta \leq n } A_{ij}^{\alpha \beta} D_{\beta} u^{j} \right) - F_{\alpha}^{i} \right], D_{x'}u^{i} + \pi' \, D_{1} u^{i} \right), 
\end{equation}
where $1 \leq i \leq N$, we have that
\begin{equation}\begin{aligned}\label{PHN415}
& \mint_{Q_{\rho} } \left| U  - (U)_{Q_{\rho} } \right|^{2} \, dx \\
& \quad \leq c  \left( \frac{\rho}{\tau} \right)
\mint_{Q_{\tau} } \left| U - (U)_{Q_{\tau} } \right|^{2} \, dx \\
& \qquad + c \left( \frac{ \tau }{ R } \right)^{ 2\mu } \left( \frac{\tau}{\rho} \right)^{n} 
\left( \kappa \mint_{Q_{\tau} } |U|^{2} \, dx 
+ \kappa \| F \|_{L^{\infty}(Q_{\tau})}^{2} + \sup_{ k \in K } [ F ]_{C^{\mu}(Q_{\tau}^{k})}^{2}    \right),
\end{aligned}\end{equation}
for any $0 < \rho \leq \tau \leq r$.
\end{lem}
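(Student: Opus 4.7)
\medskip

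\noindent\textbf{Proof plan.} The plan is to run a frozen-coefficient perturbation argument against the piecewise constant result of Lemma \ref{PCNS800}, and then transfer the resulting excess decay from the auxiliary object to $U$. Fix $z$ and a scale $\tau\in(0,r]$ (the claim only needs to be proved with $Q_{\tau}$ centered at a general point by translation). For each $k\in K$ pick a reference point $x_{k}\in\overline{Q_{\tau}^{k}}$ and set $A^{\alpha\beta}_{ij,k}:=A^{\alpha\beta}_{ij}(x_{k})$, $F^{i}_{\alpha,k}:=F^{i}_{\alpha}(x_{k})$, and let $\bar A, \bar F$ be the piecewise constant coefficients of Subsection on piecewise constant coefficients with no decay assumption. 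From the Hölder hypothesis \eqref{PHN240} and the analogous bound $[F]_{C^{\mu}(Q_{\tau}^{k})}$, one has the pointwise estimates
\begin{equation*}
|A-\bar A|\le c\,\kappa^{1/2}\left(\tfrac{\tau}{R}\right)^{\mu},\qquad
|F-\bar F|\le \tau^{\mu}\sup_{k\in K}[F]_{C^{\mu}(Q_{\tau}^{k})}\quad\text{in }Q_{\tau}.
\end{equation*}

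\smallskip

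\noindent Let $v\in u+W^{1,2}_{0}(Q_{\tau},\br^{N})$ solve $D_{\alpha}[\bar A^{\alpha\beta}_{ij}D_{\beta}v^{j}]=D_{\alpha}\bar F^{i}_{\alpha}$ in $Q_{\tau}$ and define $V$ on $Q_{\tau}$ by formula \eqref{PCN280} (with the same $\pi$ used for $U$). The first step is to test the difference equation by $u-v$, use \eqref{ell1}, Young's inequality and the two displays above to obtain the energy comparison
\begin{equation*}
\mint_{Q_{\tau}}|Du-Dv|^{2}\,dx
\le c\,\kappa\!\left(\tfrac{\tau}{R}\right)^{\!2\mu}\!\mint_{Q_{\tau}}|Du|^{2}\,dx
+c\,\tau^{2\mu}\sup_{k\in K}[F]^{2}_{C^{\mu}(Q_{\tau}^{k})}.
\end{equation*}
Writing $U^{i}_{1}-V^{i}_{1}=\sum_{\alpha}\pi_{\alpha}\bigl[(A^{\alpha\beta}_{ij}-\bar A^{\alpha\beta}_{ij})D_{\beta}v^{j}+A^{\alpha\beta}_{ij}D_{\beta}(u^{j}-v^{j})-(F^{i}_{\alpha}-\bar F^{i}_{\alpha})\bigr]$ and $U^{i}_{\beta}-V^{i}_{\beta}=D_{\beta}(u^{i}-v^{i})+\pi_{\beta}D_{1}(u^{i}-v^{i})$ for $\beta\ge 2$, and invoking Lemma \ref{HODS600} to replace $|Du|^{2}$, $|Dv|^{2}$ by $|U|^{2}+|F|^{2}$, $|V|^{2}+|\bar F|^{2}$ respectively, one concludes
\begin{equation*}
\mint_{Q_{\tau}}|U-V|^{2}\,dx
\le c\!\left(\tfrac{\tau}{R}\right)^{\!2\mu}\!\Bigl[\kappa\mint_{Q_{\tau}}|U|^{2}dx+\kappa\|F\|^{2}_{L^{\infty}(Q_{\tau})}+\sup_{k\in K}[F]^{2}_{C^{\mu}(Q_{\tau}^{k})}\Bigr].
\end{equation*}

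\smallskip

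\noindent Next, I would apply Lemma \ref{PCNS800} to $v$ and $V$: the hypothesis $\kappa(r/R)^{2\mu}\le\varepsilon$ is satisfied, so
\begin{equation*}
\mint_{Q_{\rho}}\!|V-(V)_{Q_{\rho}}|^{2}dx
\le c\!\left(\tfrac{\rho}{\tau}\right)\!\mint_{Q_{\tau}}\!|V-(V)_{Q_{\tau}}|^{2}dx
+c\,\kappa\!\left(\tfrac{\tau}{R}\right)^{\!2\mu}\!\left(\tfrac{\tau}{\rho}\right)^{\!n}\!\Bigl[\mint_{Q_{\tau}}\!|V|^{2}dx+\|\bar F\|^{2}_{L^{\infty}}\Bigr].
\end{equation*}
Using the elementary bounds $|(U)_{Q_{\rho}}-(V)_{Q_{\rho}}|^{2}\le(\tau/\rho)^{n}\mint_{Q_{\tau}}|U-V|^{2}$ and the triangle inequality in the form
\begin{equation*}
\mint_{Q_{\rho}}|U-(U)_{Q_{\rho}}|^{2}\,dx\le c\,\mint_{Q_{\rho}}|V-(V)_{Q_{\rho}}|^{2}\,dx+c\!\left(\tfrac{\tau}{\rho}\right)^{\!n}\mint_{Q_{\tau}}|U-V|^{2}\,dx,
\end{equation*}
and similarly $\mint_{Q_{\tau}}|V-(V)_{Q_{\tau}}|^{2}\le c\mint_{Q_{\tau}}|U-(U)_{Q_{\tau}}|^{2}+c\mint_{Q_{\tau}}|U-V|^{2}$ and $\mint|V|^{2}\le c\mint|U|^{2}+c\|F\|^{2}_{L^{\infty}}$, the desired inequality \eqref{PHN415} follows by substituting the comparison bound above, absorbing the $\kappa(\tau/R)^{2\mu}(\tau/\rho)^{n}\mint|U|^{2}$ term into the right-hand side and noting $\kappa\le \kappa$ in the coefficient in front of $\mint|U|^{2}$ and $\|F\|^{2}$.

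\smallskip

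\noindent The main obstacle is the bookkeeping of the $(\tau/\rho)^{n}$ blow-up that appears when passing $L^{2}$-closeness between $U$ and $V$ on $Q_{\tau}$ down to the small cube $Q_{\rho}$; this factor is tolerable because the comparison estimate carries the extra smallness $(\tau/R)^{2\mu}$, and it is compatible with the shape of the right-hand side of \eqref{PHN415}. A secondary technical point is ensuring that Lemma \ref{HODS600} is applied pointwise with the \emph{same} $\pi$ for both $U$ and $V$, so that the $\pi$-dependence does not contribute any extra error term to $|U-V|$; this is why $v$ is set up with the piecewise constant $\bar A,\bar F$ but the same $\pi$ as in the definition of $U$.
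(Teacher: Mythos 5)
Your proposal is correct and follows essentially the same route as the paper's own proof: freeze the coefficients at reference points in each $Q_{\tau}^{k}$ to form $\bar A,\bar F$, compare $u$ with the solution $v$ of the piecewise constant problem with the same boundary data, pass from $Du-Dv$ to $U-V$ via Lemma \ref{HODS600} with the same $\pi$, apply Lemma \ref{PCNS800} to $V$, and transfer back with the triangle inequality absorbing the $(\tau/\rho)^{n}$ factor into the $(\tau/R)^{2\mu}$ smallness. The only cosmetic discrepancy is the normalization $R^{2\mu}\sup_{k}[F]^{2}_{C^{\mu}(Q_{\tau}^{k})}$ versus $\sup_{k}[F]^{2}_{C^{\mu}(Q_{\tau}^{k})}$, on which the paper's statement and its proof already disagree, so this does not count against you.
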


\begin{proof}
Fix $ 0 < \rho \leq \tau \leq r$. For any $k\in K$ with $Q_{\tau}^{k} \not = \emptyset$, we choose the points $z_{k} \in Q_{\tau}^{k} $. On the other-hand, for any $k \in K$ with $Q_{\tau}^{k} = \emptyset$, let $z_{k}=0$ for the simplicity of notation. We remark that we will focus on the set $Q_{\tau}$ and we don't need to consider the set $Q_{\tau}^{k} $ when $Q_{\tau}^{k}  = \emptyset$. Set
\begin{equation}\label{PHN440}
\bar{A}_{ij}^{\alpha \beta}(x) 
= \sum_{k \in K} A_{ij}^{\alpha \beta}(z_{k}) \chi_{Q_{\tau}^{k} }
\quad \text{and} \quad
\bar{F}_{\alpha}^{i}(x) 
= \sum_{k \in K} F_{\alpha,k}^{i}(z_{k}) \chi_{Q_{\tau}^{k} }
\qquad \text{ in } ~ Q_{\tau} .
\end{equation}
Since $A_{ij}^{\alpha \beta}, F_{\alpha}^{i} \in C^{\mu} \left( Q_{\tau}^{k}  \right)$ for any $k \in K$, we find from \eqref{PHN240} and \eqref{PHN440} that
\begin{equation}\label{PHN450}
\left\|A_{ij}^{\alpha \beta} - \bar{A}_{ij}^{\alpha \beta} \right\|_{L^{\infty}(Q_{\tau} )}
\leq \kappa^{\frac{1}{2}} \left(\frac{\tau}{R}\right)^{\mu} 
\end{equation}
and
\begin{equation}\label{PHN455}
\left\|F - \bar{F} \right\|_{L^{\infty}(Q_{\tau} )}
\leq \tau^{\mu} \sup_{k \in K} [F]_{C^{\mu}(Q_{\tau} )}.
\end{equation}
Also we find from \eqref{PHN440} that 
\begin{equation}\label{PHN460}
\left\| \bar{A}_{ij}^{\alpha \beta} \right\|_{L^{\infty}(Q_{\tau} )}
\leq c \left\| A_{ij}^{\alpha \beta} \right\|_{L^{\infty}(Q_{\tau} )}
\qquad \text{and} \qquad
\left\| \bar{F} \right\|_{L^{\infty}(Q_{\tau} )}
\leq c  \| F \|_{L^{\infty}(Q_{\tau} )}.
\end{equation}
Let $v$ be the weak solution of 
\begin{equation}\label{PHN470}\left\{\begin{array}{rcll}
D_{\alpha} \left[ \bar{A}_{ij}^{\alpha \beta} D_{\beta}v^{j} \right] & = & D_{\alpha} \bar{F}_{\alpha}^{i} & \text{ in } ~ Q_{\tau} ,\\
v & = & u  & \text{ in } ~ \partial Q_{\tau} .
\end{array}\right.\end{equation}
We test \eqref{PHN300} and \eqref{PHN470} by $u^{i}-v^{i}$ $(i = 1, \cdots, N)$ to find that 
\begin{equation*}\begin{aligned}\label{}
& \mint_{Q_{\tau} } \left \langle \bar{A}_{ij}^{\alpha \beta} \left[ D_{\beta} u^{j} - D_{\beta} v^{j} \right] , D_{\alpha} u^{i} - D_{\alpha} v^{i} \right \rangle \, dx \\
& \quad = \mint_{Q_{\tau} } \left \langle \left[ \bar{A}_{ij}^{\alpha \beta} - A_{ij}^{\alpha \beta}  \right]  D_{\beta} u^{j} , D_{\alpha} u^{i} - D_{\alpha} v^{i} \right \rangle + \left \langle F_{\alpha}^{i} - \bar{F}_{\alpha}^{i} , D_{\alpha} u^{i} - D_{\alpha} v^{i} \right \rangle \, dx.
\end{aligned}\end{equation*}
So by using Young's inequality, we obtain from \eqref{PHN450} and \eqref{PHN455} that
\begin{equation}\label{PHN490}
\mint_{Q_{\tau} } |Du - Dv|^{2} \, dx 
\leq c \left( \frac{ \tau }{ R } \right)^{ 2\mu } \left[ \kappa \mint_{Q_{\tau} } |Du|^{2} \, dx
 + R^{2\mu} \sup_{ k \in K } [ F ]_{C^{\mu}(Q_{\tau}^{k})}^{2}    \right].
\end{equation}
To use Lemma \ref{PCNS800}, we set $V : Q_{\tau} \to \br^{Nn}$ as $V = \left( V^{1}, \cdots, V^{N} \right)^{T}$ where
\begin{equation}\label{PHN500}
V^{i} = \left( \sum_{ 1 \leq \alpha \leq n} \pi_{\alpha} \left[ \left( \sum_{1 \leq j \leq N} \sum_{ 1 \leq \beta \leq n} \bar{A}_{ij}^{\alpha \beta} D_{\beta} v^{i} \right) - \bar{F}_{\alpha}^{i} \right], D_{x'}v^{i} + \pi' \, D_{1} v^{i} \right) 
\end{equation}
for any $1 \leq i \leq N$. By the triangle inequality, \eqref{PHN450}, \eqref{PHN455} and \eqref{PHN490}, we compare $U$ in \eqref{PHN410} and $V$ in \eqref{PHN500} as follows:
\begin{equation*}\begin{aligned}\label{}
\mint_{Q_{\tau} } | U - V |^{2}  dx 
& \leq c  \mint_{Q_{\tau} } \left| \left[ A_{ij}^{\alpha \beta} - \bar{A}_{ij}^{\alpha \beta} \right] D_{\beta}u^{j }\right|^{2} \, dx \\
& \quad + c  \mint_{Q_{\tau} } \left|  \left( \bar{A}_{ij}^{\alpha \beta} \left[ D_{\beta}u^{j} - D_{\beta}v^{j} \right] - \left[  F_{1}^{i} - \bar{F}_{1}^{i} \right] , D_{x'}u^{i} - D_{x'}v^{i}  \right) \right|^{2}  dx \\
& \leq c \left[ \mint_{Q_{\tau} } |Du - Dv|^{2}  + \kappa \tau^{2\mu} |Du|^{2} \, dx \right] \\
& \leq c \left( \frac{ \tau }{ R } \right)^{ 2\mu } \left[ \kappa \mint_{Q_{\tau} } |Du|^{2} \, dx
 + R^{2\mu} \sup_{ k \in K } [ F ]_{C^{\mu}(Q_{\tau}^{k})}^{2}    \right].
\end{aligned}\end{equation*}
So it follows from Lemma \ref{HODS600} that
\begin{equation*}\label{}
\mint_{Q_{\tau} } | U - V |^{2} \, dx 
\leq c \left( \frac{ \tau }{ R } \right)^{ 2\mu } \left( \kappa \mint_{Q_{\tau} } |U|^{2} \, dx 
+ \kappa \| F \|_{L^{\infty}(Q_{\tau})}^{2} + R^{2\mu} \sup_{ k \in K } [ F ]_{C^{\mu}(Q_{\tau}^{k})}^{2}     \right).
\end{equation*}
Since $\bar{A}_{ij}^{\alpha \beta} $ $(1 \leq \alpha, \beta \leq n, \ 1 \leq i,j \leq N, ~ k \in K)$ are piecewise constant coefficients and $\kappa \left( \frac{ r }{ R } \right)^{2\mu} \leq \varepsilon$, we have from Lemma \ref{PCNS800} that
\begin{equation*}\begin{aligned}\label{}
\mint_{Q_{\rho} } \left| V  - (V)_{Q_{\rho} } \right|^{2} \, dx 
& \leq c  \left( \frac{\rho}{\tau} \right)
\mint_{Q_{\tau} } \left| V - (V)_{Q_{\tau} } \right|^{2} \, dx \\
& \quad + c \left( \frac{ \tau }{ R } \right)^{ 2\mu } \left( \frac{\tau}{\rho} \right)^{n} 
\left( \kappa \mint_{Q_{\tau} } |V|^{2} \, dx 
+ \kappa \| F \|_{L^{\infty}(Q_{\tau})}^{2}   \right).
\end{aligned}\end{equation*}
By combining the above two estimates, we have from Lemma \ref{HODS600} that
\begin{equation*}\begin{aligned}\label{}
& \mint_{Q_{\rho} } \left| U  - (U)_{Q_{\rho} } \right|^{2} \, dx \\
& \quad \leq c  \left( \frac{\rho}{\tau} \right)
\mint_{Q_{\tau} } \left| U - (U)_{Q_{\tau} } \right|^{2} \, dx \\
& \qquad + c \left( \frac{ \tau }{ R } \right)^{ 2\mu } \left( \frac{\tau}{\rho} \right)^{n} 
\left( \kappa \mint_{Q_{\tau} } |U|^{2} \, dx 
+ \kappa \| F \|_{L^{\infty}(Q_{\tau})}^{2} + R^{2\mu} \sup_{ k \in K } [ F ]_{C^{\mu}(Q_{\tau}^{k})}^{2}    \right).
\end{aligned}\end{equation*}
Since $0 < \rho \leq \tau \leq r$ was arbitrary chosen, the lemma follows.
\end{proof}

By repeating the proof of Lemma \ref{PCCS900}, we obtain the following lemma.

\begin{lem}\label{PHNS600}
For the small universal constant $\varepsilon \in (0,1]$ chosen in Lemma \ref{PCNS500}, if $\kappa \left( \frac{ r }{ R } \right)^{2\mu} \leq \varepsilon$ then for $U : Q_{r} \to \br^{Nn}$ defined as $U = \left( U^{1}, \cdots, U^{N} \right)^{T}$ and
\begin{equation}\label{PHN610} 
U^{i} = \left( \sum_{1 \leq \alpha \leq n } \pi_{\alpha} \left[ \left( \sum_{1 \leq j \leq N} \sum_{1 \leq \beta \leq n } A_{ij}^{\alpha \beta} D_{\beta} u^{j} \right) - F_{\alpha}^{i} \right], D_{x'}u^{i} + \pi' \, D_{1} u^{i} \right), 
\end{equation}
where $1 \leq i \leq N$, we have that
\begin{equation*}\begin{aligned}
\mint_{Q_{\rho} } \left| U  \right|^{2} \, dx
\leq c \left( \mint_{Q_{r} } \left| U  \right|^{2} \, dx
+   \| F \|_{L^{\infty}(Q_{r} )}^{2} 
+ R^{2\mu} \sup_{ k \in K } [ F ]_{C^{\mu}(Q_{r}^{k})}^{2}     \right),
\end{aligned}\end{equation*}
for any $0 < \rho < r$.
\end{lem}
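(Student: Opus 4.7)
The plan is to repeat the iteration scheme used in the proof of Lemma \ref{PCCS900}, replacing Lemma \ref{PCCS800} by Lemma \ref{PHNS400} and $W$ by $U$. Starting from Lemma \ref{PHNS400}, I take square roots of both sides and combine with the elementary splitting $\left(\mint_{Q_\rho}|U|^2\,dx\right)^{1/2} \leq \left(\mint_{Q_\rho}|U-(U)_{Q_\rho}|^2\,dx\right)^{1/2} + |(U)_{Q_\rho}|$ to deduce, for a fixed universal $\delta \in (0,1)$ to be chosen and $\tau_i := \delta^i r$,
\begin{equation*}
(E_{i+1})^{\frac{1}{2}} \leq c_1 \delta^{\frac{1}{2}} (E_i)^{\frac{1}{2}} + c_1 \delta^{-\frac{n}{2}} \left( \frac{\tau_i}{R} \right)^{\mu} \left[ \kappa^{\frac{1}{2}} \left( \mint_{Q_{\tau_i}} |U|^2 dx \right)^{\frac{1}{2}} + \mathcal{F} \right],
\end{equation*}
where $E_i := \mint_{Q_{\tau_i}} |U-(U)_{Q_{\tau_i}}|^2\,dx$ and $\mathcal{F} := \|F\|_{L^\infty(Q_r)} + R^{\mu} \sup_{k \in K} [F]_{C^\mu(Q_r^k)}$.

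I would then fix $\delta$ universal so that $c_1 \delta^{1/2} \leq 1/4$, making the leading term contractive, and then shrink $\varepsilon$ (depending only on the same universal parameters as in Lemma \ref{PCCS900}) so that the geometric factor $\kappa^{1/2}(r/R)^{\mu}$, multiplied by the convergent series $\sum_{i=0}^\infty \delta^{\mu i}$ and the loss $\delta^{-n}$, is at most $1/20$. This is the direct analogue of \eqref{PCC930}: here the role of $\nu^{\sigma/(2(2+\sigma))}(\theta/R)^{\mu\sigma/(2+\sigma)}$ is played by the cleaner factor $\kappa^{1/2}(\tau_i/R)^{\mu}$, so no Gehring-type higher integrability is needed.

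After summing the one-step estimate over $i = 0, 1, \ldots, j$, I obtain a bound on $\sum_{i=0}^{j+1} (E_i)^{1/2}$ that depends on $(E_0)^{1/2}$, on the averages $|(U)_{Q_{\tau_i}}|$ for $i \leq j$, and on $\mathcal{F}$. The telescoping inequality
\begin{equation*}
\left| (U)_{Q_{\tau_{j+1}}} - (U)_{Q_r} \right| \leq \delta^{-\frac{n}{2}} \sum_{i=0}^{j+1} (E_i)^{\frac{1}{2}}
\end{equation*}
then feeds into an induction on $j$, exactly as in \eqref{PCC960}, showing
\begin{equation*}
|(U)_{Q_{\tau_j}}| \leq 10\delta^{-n} \left[ \left( \mint_{Q_r} |U|^2 dx \right)^{\frac{1}{2}} + \mathcal{F} \right] \qquad (j = 0, 1, 2, \ldots).
\end{equation*}
Combining this with the summed excess bound, and then using $\mint_{Q_{\tau_j}} |U|^2 \leq 2 E_j + 2|(U)_{Q_{\tau_j}}|^2$, yields the desired estimate for $\rho = \tau_j$. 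Arbitrary $\rho \in (0, r)$ is recovered by sandwiching $\tau_{j+1} \leq \rho < \tau_j$ and using the trivial inclusion $Q_\rho \subset Q_{\tau_j}$ with the universal volume loss $\delta^{-n}$.

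The only real technical point, and the step one has to execute with care, is the calibration of $\varepsilon$ against the (universal) constants $c_1$ and $\delta$ so that the inductive control on $|(U)_{Q_{\tau_j}}|$ actually closes; apart from a cosmetic change in how the constants $\nu, \sigma$ are replaced by $\kappa^{1/2}$ and the clean exponent $\mu$, the argument is structurally identical to that of Lemma \ref{PCCS900}, so no new ideas are required beyond Lemma \ref{PHNS400}.
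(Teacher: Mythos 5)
Your proposal is correct and is exactly the paper's approach: the paper's proof of this lemma consists precisely of the instruction to repeat the iteration argument of Lemma \ref{PCCS900}, substituting the excess decay estimate \eqref{PHN415} of Lemma \ref{PHNS400} for \eqref{PCC915} and $U$ for $W$. Your observation that the Gehring exponent $\sigma$ is no longer needed because the perturbation term already carries the clean factor $\kappa(\tau/R)^{2\mu}$ is an accurate account of the only (cosmetic) difference.
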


\begin{proof}
The proof is same to that of Lemma \ref{PCCS900}. Instead of \eqref{PCC915} and $W$, use \eqref{PHN415} and $U$ respectively.
\end{proof}

We extend the technical result \cite[Lemma 3.4]{HQLF1} to the following lemma.

\begin{lem}\label{PHNS700}
Let $\phi(t)$ be a nonnegative  function on $[0,R]$. If
\begin{equation*}
\phi(\rho) \leq  A\left( \frac{\rho}{\tau} \right)^{\alpha} \phi(\tau) + B  \tau^{\beta} \left( \frac{\tau}{\rho} \right)^{n}
\end{equation*}
holds for any $0 < \rho \leq \tau \leq r$ with $A,B,\alpha,\beta,n$ nonnegative constants and $\beta < \alpha$. Then for any $\gamma \in [\beta,\alpha)$, there exist  a positive constant $c$ depending on $n,A,\alpha,\beta,\gamma$ such that 
\begin{equation*}
\phi(\rho) \leq  c \left[ \left( \frac{\rho}{\tau} \right)^{\gamma} \phi(\tau) + B  \rho^{\beta} \right],
\end{equation*}
for all $0<\rho \leq \tau \leq r$.
\end{lem}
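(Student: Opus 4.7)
The plan is to adapt the classical iteration argument of \cite[Lemma 3.4]{HQLF1} by iterating along a geometric sequence of scales with a fixed small ratio, so that the extra factor $(\tau/\rho)^{n}$ in the hypothesis is frozen at a constant. First I would choose $t \in (0, 1/2)$ small enough that $At^{\alpha} \leq t^{\gamma}/2$; this is possible since $\gamma < \alpha$, an admissible choice being $t := \min\{1/2, (2A)^{-1/(\alpha-\gamma)}\}$. The crucial observation is that when we apply the hypothesis with $\rho = t\tau$, the factor $(\tau/\rho)^{n}$ becomes the constant $t^{-n}$, so that the hypothesis specializes to
\begin{equation*}
\phi(t\tau) \leq \tfrac{1}{2}\, t^{\gamma}\phi(\tau) + Bt^{-n}\tau^{\beta}.
\end{equation*}

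Next I would iterate the above at the discrete scales $\tau_{k} := t^{k}\tau$. Setting $\psi_{k} := \phi(\tau_{k})$, the recursion $\psi_{k+1} \leq (t^{\gamma}/2)\psi_{k} + Bt^{-n}\tau_{k}^{\beta}$ unfolds to
\begin{equation*}
\psi_{k} \leq (t^{\gamma}/2)^{k}\psi_{0} + Bt^{-n}\tau^{\beta} \sum_{j=0}^{k-1} (t^{\gamma}/2)^{k-1-j}\, t^{j\beta}.
\end{equation*}
The extra factor $1/2$ is what makes the geometric-type sum $\sum_{j=0}^{k-1}(2t^{\beta-\gamma})^{j}$ (up to a constant multiple) behave uniformly in $k$ both when $\beta < \gamma$ (ratio $>1$ but the leading prefactor $2^{-(k-1)}$ compensates) and in the borderline case $\beta = \gamma$ (ratio $=2$, sum $\leq 2^{k}$, again absorbed by $2^{-(k-1)}$). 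This gives a constant $c_{1} = c_{1}(A, n, \alpha, \beta, \gamma)$ with
\begin{equation*}
\phi(t^{k}\tau) \leq t^{k\gamma}\phi(\tau) + c_{1}B\, t^{k\beta}\tau^{\beta} \qquad (k=0,1,2,\ldots).
\end{equation*}

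For a general $0 < \rho \leq \tau$, I would choose the unique integer $k \geq 0$ with $t^{k+1}\tau < \rho \leq t^{k}\tau$ and apply the hypothesis once more on the pair $(\rho, t^{k}\tau)$. Since $\rho/(t^{k}\tau) \leq 1$ and $t^{k}\tau/\rho < 1/t$, combining with the iteration bound yields
\begin{equation*}
\phi(\rho) \leq A\phi(t^{k}\tau) + Bt^{-n}(t^{k}\tau)^{\beta} \leq At^{k\gamma}\phi(\tau) + c_{2}B\, t^{k\beta}\tau^{\beta}.
\end{equation*}
Finally the bounds $t^{k\gamma} \leq t^{-\gamma}(\rho/\tau)^{\gamma}$ and $t^{k\beta}\tau^{\beta} \leq t^{-\beta}\rho^{\beta}$, both coming from $t^{k+1}\tau < \rho$, produce the claimed estimate with $c = c(n,A,\alpha,\beta,\gamma)$.

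The main subtlety is the presence of $(\tau/\rho)^{n}$ on the right-hand side of the hypothesis, which is absent from the classical statement of \cite{HQLF1}. A naive iteration along an arbitrary decreasing sequence of scales would see this factor blow up at each step; the remedy is to iterate only along a \emph{fixed}-ratio geometric sequence so that this factor is always exactly $t^{-n}$ and can be absorbed into the final constant. The extra factor $1/2$ inserted into the defining inequality $At^{\alpha} \leq t^{\gamma}/2$ is what allows the borderline case $\gamma = \beta$ to be treated simultaneously with $\gamma > \beta$ without introducing a logarithmic loss in the summation.
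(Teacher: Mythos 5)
Your proposal is correct and follows essentially the same route as the paper's proof: fix a ratio $t$ (the paper's $\delta$) with $2At^{\alpha}\leq t^{\gamma}$, iterate along the geometric scales $t^{k}\tau$ so that the factor $(\tau/\rho)^{n}$ is frozen at $t^{-n}$, use $\gamma\geq\beta$ together with the spare factor $\tfrac12$ to sum the inhomogeneous terms uniformly in $k$, and then pass to general $\rho$ by one more application of the hypothesis on the bracketing scale. The details, including the treatment of the borderline case $\gamma=\beta$ and the final conversion $t^{k\beta}\tau^{\beta}\leq t^{-\beta}\rho^{\beta}$, match the paper's argument.
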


\begin{proof}
Since $\gamma \in [\beta,\alpha)$, choose $\delta = \delta(A,\alpha,\gamma) \in (0,1)$ so that $2A\delta^{\alpha} \leq \delta^{\gamma}$. Fix $0 < \rho \leq \delta \tau  \leq \delta r$. Other-wise the lemma holds from that $\delta \tau \leq  \rho \leq \tau$ and that $\delta = \delta(A,\alpha,\gamma) \in (0,1)$. Let $\tau_{i} = \delta^{i} \tau$. Then
\begin{equation*}\label{} 
\phi( \tau_{i+1}) \leq \frac{ \delta^{\gamma} }{2} \cdot \phi( \tau_{i} ) + B \delta^{-n} \tau_{i}^{\beta},
\end{equation*}
for any $i = 0,1,\cdots$. Since $\delta \in (0,1)$ and $\gamma \in [\beta,\alpha)$, we find that 
\begin{equation*}\begin{aligned}% \label{} 
\phi(\tau_{i+1}) 
& \leq \left( \frac{\delta^{\gamma}}{2} \right)^{i+1} \phi(\tau) + B \delta^{-n}  \left[ \tau_{0}^{\beta} \left( \frac{\delta^{\gamma}}{2} \right)^{i}  + \tau_{1}^{\beta} \left( \frac{\delta^{\gamma}}{2} \right)^{i-1}  + \cdots + \tau_{i}^{\beta}   \right] \\
& \leq \left( \frac{\delta^{\gamma}}{2} \right)^{i+1} \phi(\tau) + B \delta^{-n}  \left[ \tau_{0}^{\beta} \left( \frac{\delta^{\beta}}{2} \right)^{i}  + \tau_{1}^{\beta} \left( \frac{\delta^{\beta}}{2} \right)^{i-1}  + \cdots + \tau_{i}^{\beta}   \right].
\end{aligned}\end{equation*}
Since $\tau_{0}^{\beta} \delta^{\beta i} = \tau_{1}^{\beta} \delta^{ \beta(i-1)} = \cdots = \tau_{i}^\beta$, we obtain that
\begin{equation*}\label{} 
\phi(\tau_{i+1}) 
\leq \left( \frac{\delta^{\gamma}}{2} \right)^{i+1} \phi(\tau) 
+  2 B \delta^{-n} \tau_{i}^{\beta},
\end{equation*}
for any $i = 0,1,\cdots$. Choose $i \in \{ 0,1,2,\cdots\}$ satisfying  that $\tau_{i+2} \leq \rho \leq \tau_{i+1}$. Then
\begin{equation*}\begin{aligned}
\phi(\rho) 
& \leq A \left( \frac{ \tau_{i+1} }{\rho} \right)^{n} \phi(\tau_{i+1}) + B \delta^{-n}\tau_{i+1}^\beta \\
& \leq A \delta^{-n} \left( \frac{\delta^{\gamma}}{2} \right)^{i} \phi(\tau) 
+  (2 A+1) B \delta^{-2n} \tau_{i}^{\beta} \\
& \leq A \delta^{-n - \gamma} \left( \frac{ \rho }{\tau} \right)^{\gamma} \phi(\tau) 
+  (2 A+1) B \delta^{-2n} \tau_{i}^{\beta},
\end{aligned}\end{equation*}
and the lemma follows from that $\tau_{i}^{\beta} \leq \delta^{-2\beta} \tau_{i+2}^{\beta} 
\leq \delta^{-2\beta } \rho^{\beta}$.
\end{proof}

\begin{lem}\label{PHNS800}
For the small universal constant $\varepsilon \in (0,1]$ chosen in Lemma \ref{PCNS500}, if $\kappa \left( \frac{ r }{ R } \right)^{2\mu} \leq \varepsilon$ then for $U : Q_{r} \to \br^{Nn}$ defined as $U = \left( U^{1}, \cdots, U^{N} \right)^{T}$ and
\begin{equation*}\label{} 
U^{i} = \left( \sum_{1 \leq \alpha \leq n } \pi_{\alpha} \left[ \left( \sum_{1 \leq j \leq N} \sum_{1 \leq \beta \leq n } A_{ij}^{\alpha \beta} D_{\beta} u^{j} \right) - F_{\alpha}^{i} \right], D_{x'}u^{i} + \pi' \, D_{1} u^{i} \right), 
\end{equation*}
where $1 \leq i \leq N$, we have that
\begin{equation*}\begin{aligned}
& \frac{1}{\rho^{2\mu}}  \mint_{Q_{\rho} } \left| U  - (U)_{Q_{\rho} } \right|^{2} \, dx \\
& \quad \leq \frac{c}{ r^{ 2\mu }}
\mint_{Q_{r} } \left| U - (U)_{Q_{r} } \right|^{2} \, dx \\
& \qquad + \frac{c}{ R^{2\mu} } \left( \kappa \mint_{Q_{r} } |U|^{2} \, dx 
+ \kappa \| F \|_{L^{\infty}(Q_{r})}^{2} + R^{2\mu} \sup_{ k \in K } [ F ]_{C^{\mu}(Q_{\tau}^{k})}^{2}    \right),
\end{aligned}\end{equation*}
for any $0 < \rho \leq r$.
\end{lem}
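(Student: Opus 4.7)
The plan is to assemble the result from the three preceding lemmas of this subsection: the one-step excess decay of Lemma \ref{PHNS400}, the local $L^{2}$ control of Lemma \ref{PHNS600}, and the iteration/Campanato-type lemma (Lemma \ref{PHNS700}). The idea is that Lemma \ref{PHNS400} already yields a linear decay factor $\rho/\tau$ on the oscillation, together with an inhomogeneous term of order $(\tau/R)^{2\mu}$; after freezing the inhomogeneous piece at scale $r$, Lemma \ref{PHNS700} automatically promotes the decay from $\rho/\tau$ to $(\rho/\tau)^{2\mu}$, which is exactly the scaling in the claimed bound.

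In detail, I would set $\phi(\tau) := \mint_{Q_{\tau}}|U - (U)_{Q_{\tau}}|^{2}\,dx$ for $\tau \in (0,r]$ and read Lemma \ref{PHNS400} as
\[
\phi(\rho) \leq c\,\frac{\rho}{\tau}\,\phi(\tau) + c\left(\frac{\tau}{R}\right)^{2\mu}\left(\frac{\tau}{\rho}\right)^{n}\Psi(\tau),
\qquad 0 < \rho \leq \tau \leq r,
\]
where $\Psi(\tau) := \kappa \mint_{Q_{\tau}}|U|^{2}\,dx + \kappa\|F\|_{L^{\infty}(Q_{\tau})}^{2} + \sup_{k\in K}[F]_{C^{\mu}(Q_{\tau}^{k})}^{2}$. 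To eliminate the $\tau$-dependence of $\Psi$, I apply Lemma \ref{PHNS600} to bound $\mint_{Q_{\tau}}|U|^{2}\,dx$ by a constant multiple of $\mint_{Q_{r}}|U|^{2}\,dx + \|F\|_{L^{\infty}(Q_{r})}^{2} + R^{2\mu}\sup_{k}[F]_{C^{\mu}(Q_{r}^{k})}^{2}$; the remaining two pieces of $\Psi(\tau)$ are automatically monotone in $\tau$. Collecting terms, and noting that $\kappa$ is among the parameters on which the universal constant $c$ is allowed to depend (so that the cross contribution $\kappa R^{2\mu}\sup_{k}[F]_{C^{\mu}(Q_{r}^{k})}^{2}$ is absorbed into the pure $R^{2\mu}\sup_{k}[F]_{C^{\mu}(Q_{r}^{k})}^{2}$ term), I set
\[
\Psi_{0} := \kappa \mint_{Q_{r}}|U|^{2}\,dx + \kappa\|F\|_{L^{\infty}(Q_{r})}^{2} + R^{2\mu}\sup_{k\in K}[F]_{C^{\mu}(Q_{r}^{k})}^{2},
\qquad B := c\,R^{-2\mu}\Psi_{0},
\]
and the previous inequality reduces to $\phi(\rho) \leq c(\rho/\tau)\phi(\tau) + B\tau^{2\mu}(\tau/\rho)^{n}$ on $0 < \rho \leq \tau \leq r$.

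Finally I invoke Lemma \ref{PHNS700} with $\alpha = 1$, $\beta = \gamma = 2\mu$; this is admissible because $\mu \leq 1/4$ and so $\beta = 2\mu \leq 1/2 < 1 = \alpha$. The lemma gives $\phi(\rho) \leq c(\rho/r)^{2\mu}\phi(r) + cB\rho^{2\mu}$ for every $0 < \rho \leq r$, and dividing through by $\rho^{2\mu}$ produces exactly the stated inequality. I do not expect any genuine obstacle: the whole argument is a routine three-step assembly of previously proved lemmas. The one mildly delicate bookkeeping point is the asymmetric placement of $\kappa$ in $\Psi(\tau)$ (which multiplies only the $|U|^{2}$ and $L^{\infty}$ terms, not the $C^{\mu}$-seminorm term), and one has to verify—as indicated above—that after substituting the Caccioppoli-type bound from Lemma \ref{PHNS600} into $\mint_{Q_{\tau}}|U|^{2}$, the resulting expression still fits into the right-hand side of the claim with the $R^{2\mu}$-weighted seminorm term appearing without a $\kappa$ prefactor.
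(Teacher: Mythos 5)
Your proposal is correct and follows essentially the same route as the paper: combine the one-step decay of Lemma \ref{PHNS400} with the $L^{2}$ bound of Lemma \ref{PHNS600} to freeze the inhomogeneous term at scale $r$, then apply Lemma \ref{PHNS700} with $\alpha=1$, $\beta=\gamma=2\mu$ and divide by $\rho^{2\mu}$. The only bookkeeping wrinkle you flag (the $R^{2\mu}$ prefactor on the $C^{\mu}$-seminorm term) traces back to an inconsistency between the statement of \eqref{PHN415} and the display at the end of its proof, and the paper's own proof of this lemma uses the $R^{2\mu}$-weighted version exactly as you do.
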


\begin{proof}
In view of Lemma \ref{PHNS600}, we have that
\begin{equation}\label{PHN820}
\mint_{Q_{\tau} } \left| U  \right|^{2} \, dx
\leq \left( \mint_{Q_{r} } \left| U  \right|^{2} \, dx
+   \| F \|_{L^{\infty}(Q_{r} )}^{2} 
+ R^{2\mu} \sup_{ k \in K } [ F ]_{C^{\mu}(Q_{r}^{k})}^{2} \right),
\end{equation}
for any $0 < \tau \leq r$. In view of Lemma \ref{PHNS400}, we have that 
\begin{equation*}\begin{aligned}
& \mint_{Q_{\rho} } \left| U  - (U)_{Q_{\rho} } \right|^{2} \, dx \\
& \quad \leq c \left( \frac{\rho}{\tau} \right)
\mint_{Q_{\tau} } \left| U - (U)_{Q_{\tau} } \right|^{2} \, dx \\
& \qquad + c \left( \frac{ \tau }{ R } \right)^{ 2\mu }  \left( \frac{\tau}{\rho} \right)^{n} 
\left( \kappa \mint_{Q_{\tau} } |U|^{2} \, dx 
+ \kappa \| F \|_{L^{\infty}(Q_{\tau})}^{2} + R^{2\mu} \sup_{ k \in K } [ F ]_{C^{\gamma}(Q_{\tau}^{k}    )}^{2}  \right),
\end{aligned}\end{equation*}
for any $0 < \rho \leq \tau \leq r$. So it follows from \eqref{PHN820} that
\begin{equation*}\begin{aligned}
& \mint_{Q_{\rho} } \left| U  - (U)_{Q_{\rho} } \right|^{2} \, dx \\
& \quad \leq c  \left( \frac{\rho}{\tau} \right)
\mint_{Q_{\tau} } \left| U - (U)_{Q_{\tau} } \right|^{2} \, dx \\
& \qquad + c \left( \frac{ \tau }{ R } \right)^{ 2\mu } \left( \frac{\tau}{\rho} \right)^{n}  \left( \kappa \mint_{Q_{r} } |U|^{2} \, dx 
+ \kappa \| F \|_{L^{\infty}(Q_{r})}^{2} + R^{2\mu}  \sup_{ k \in K } [ F ]_{C^{\gamma}(Q_{r}^{k}    )}^{2}  \right),
\end{aligned}\end{equation*}
for any $0 < \rho \leq \tau \leq r$. So by taking $\alpha = 1$, $\gamma=\beta=2\mu \in (0,1)$ in Lemma \ref{PHNS700},
\begin{equation*}\begin{aligned}
& \mint_{Q_{\rho} } \left| U  - (U)_{Q_{\rho} } \right|^{2} \, dx \\
& \quad \leq c  \left( \frac{\rho}{\tau} \right)^{2\mu}
\mint_{Q_{\tau} } \left| U - (U)_{Q_{\tau} } \right|^{2} \, dx \\
& \qquad + c \left( \frac{ \rho }{ R } \right)^{ 2\mu }  \left( \kappa \mint_{Q_{r} } |U|^{2} \, dx 
+ \kappa \| F \|_{L^{\infty}(Q_{r})}^{2} + R^{2\mu} \sup_{ k \in K } [ F ]_{C^{\gamma}(Q_{r}^{k}    )}^{2}  \right),
\end{aligned}\end{equation*}
for any $0 < \rho \leq r$. So the lemma follows.
\end{proof}

\section{Proof of the main theorem}

With the assumption in the main theorems, has scaling invariance. By taking $r = 2R$, $\rho=R$, $c_{1}= \sup_{k \in K_{+}} [D_{x'}\varphi_{k}]_{C^{\gamma}(Q_{3R}')}$ and $c_{2} = \sup_{k \in K_{+}} \| \varphi_{k} \|_{L^{\infty}(Q_{3R}')} \leq 4R + 2nR  \sup_{k \in K_{+}} \| D_{x'}\varphi_{k} \|_{L^{\infty}(Q_{3R}')}$ in  Lemma \ref{GSS300}, we use the condition \eqref{minimum} to find that 
\begin{equation*}\begin{aligned}\label{}
& \left| D_{x'}\varphi_{l}(x') - D_{x'}\varphi_{k}(x') \right| \\
& \ \leq 6 \left[ R^{\gamma}  \sup_{k \in K_{+}} [D_{x'}\varphi_{k}]_{C^{\gamma}(Q_{3R}')} + 1 + n  \sup_{k \in K_{+}} \| D_{x'}\varphi_{k} \|_{L^{\infty}(Q_{3R}')} \right]^{\frac{1}{\gamma+1}} \left[ \frac{ \varphi_{l}(x') - \varphi_{k}(x') }{R} \right]^{\frac{\gamma}{\gamma+1}} 
\end{aligned}\end{equation*}
for any $x' \in Q_{2R}'$ and $k,l \in K_{+}$. Since $\mu = \frac{\gamma}{2(\gamma+1)}$, we have that
\begin{equation*}\begin{aligned}
\left[ D_{x'}\varphi_{k} \right]_{C^{2\mu}(Q_{3R}')}
& = \sup_{ x', y' \in Q_{3R}' } \frac{ \left| D_{x'}\varphi_{k}(x') - D_{x'}\varphi_{k}(y') \right| }{ |x'-y'|^{\frac{\gamma}{\gamma+1}} } \\
& \leq (3nR)^{\frac{ \gamma^{2} }{\gamma+1}}  \sup_{ x', y' \in Q_{3R}' } \frac{  \left| D_{x'}\varphi_{k}(x') - D_{x'}\varphi_{k}(y') \right| }{ |x'-y'|^{\gamma} } \\
& =  (3nR)^{\frac{ \gamma^{2} }{\gamma+1}} [D_{x'}\varphi_{k}]_{C^{\gamma}(Q_{3R}')}.
\end{aligned}\end{equation*}
So for the following universal constant
\begin{equation}\label{kappa}
\kappa =  18n \left( 1 + R^{\gamma}  \sup_{k \in K_{+}} [D_{x'}\varphi_{k}]_{C^{\gamma}(Q_{3R}')} +   \sup_{k \in K_{+}} \| D_{x'}\varphi_{k} \|_{L^{\infty}(Q_{3R}')} \right),
\end{equation}
one can show that
\begin{equation}\label{PMT160}
|D_{x'}\varphi_{l}(x') - D_{x'}\varphi_{k}(x')|
\leq \kappa \left( \frac{ \varphi_{l}(x') - \varphi_{k}(x') }{R} \right)^{2\mu} 
\ \ \quad  \left( x' \in Q_{2r}, ~ k,l \in K_{+} \right),
\end{equation}
and
\begin{equation}\label{PMT170}
|D_{x'}\varphi_{k}(x') - D_{x'}\varphi_{k}(y')|
\leq \kappa  \left( \frac{ |x'-y'| }{R} \right)^{2\mu}
\qquad  \left( x',y' \in Q_{2r}', ~ k \in K_{+} \right).
\end{equation}

To prove H\"{o}lder continuity of $U$ in Theorem \ref{Theorem_of_U}, we use the following Campanato type embedding.

\begin{prop}\label{PMTS300}
Suppose that $h \in L^{2}(Q_{2R}(z))$ satisfies 
\begin{equation*}
\int_{Q_{R}(y)} | h - (h)_{Q_{r}(y)}|^{2} \, dx 
\leq M^{2} r^{n+2\gamma} 
\qquad 
\left( y \in Q_{R}(z), \ r \in (0,R] \right)
\end{equation*}
for some $\gamma \in (0,1)$. Then
\begin{equation*}
[h]_{C^{\gamma}(Q_{R}(z))}
\leq c M.
\end{equation*}
\end{prop}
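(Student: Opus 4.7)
The plan is to establish the classical Campanato characterization of H\"older continuity from the given excess-decay hypothesis. The first step is to produce a pointwise representative of $h$ on $Q_R(z)$. For each fixed $y \in Q_R(z)$, I would show that $\{(h)_{Q_{2^{-k}R}(y)}\}_{k \geq 0}$ is a Cauchy sequence by the dyadic telescoping estimate
\begin{equation*}
\bigl|(h)_{Q_r(y)} - (h)_{Q_{r/2}(y)}\bigr| \leq \frac{1}{|Q_{r/2}(y)|} \int_{Q_{r/2}(y)} \bigl|h - (h)_{Q_r(y)}\bigr| \, dx \leq c\, r^{-n/2} \bigl\| h - (h)_{Q_r(y)} \bigr\|_{L^{2}(Q_r(y))} \leq c M r^\gamma,
\end{equation*}
obtained by the Cauchy--Schwarz inequality combined with the hypothesis. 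Summing the resulting geometric series produces a limit $\tilde h(y)$ satisfying $|\tilde h(y) - (h)_{Q_r(y)}| \leq c M r^\gamma$ for every $r \in (0,R]$; by Lebesgue differentiation $\tilde h = h$ almost everywhere, so I would replace $h$ by $\tilde h$ without loss of generality.

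The second step is the H\"older comparison at two points. For $y_1, y_2 \in Q_R(z)$ set $d := |y_1 - y_2|_{\infty}$. In the principal case $d \leq R/2$, the elementary $\ell^\infty$ inclusion $Q_d(y_2) \subset Q_{2d}(y_1)$ yields
\begin{equation*}
\bigl|(h)_{Q_{2d}(y_1)} - (h)_{Q_d(y_2)}\bigr| \leq \frac{1}{|Q_d(y_2)|} \int_{Q_{2d}(y_1)} \bigl|h - (h)_{Q_{2d}(y_1)}\bigr| \, dx \leq c M d^\gamma,
\end{equation*}
by Cauchy--Schwarz and the hypothesis at center $y_1$ and radius $2d \leq R$. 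Combining with the pointwise bound $|h(y_i) - (h)_{Q_{r_i}(y_i)}| \leq cM r_i^\gamma$ from the first step, taken at the scales $r_1 = 2d$ and $r_2 = d$, gives $|h(y_1) - h(y_2)| \leq c M d^\gamma$. For the remaining range $R/2 < d < 2R$ (which is forced by $y_i \in Q_R(z)$), I would interpolate via a chain of a universally bounded number of intermediate points in $Q_R(z)$ with consecutive $\ell^\infty$-distance $\leq R/2$ and iterate the previous bound; this produces only a harmless multiplicative constant.

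The main obstacle, really just bookkeeping rather than a conceptual difficulty, is ensuring that every cube invoked in the comparison step lies inside $Q_{2R}(z)$ so that the hypothesis actually applies, and verifying the cube inclusion $Q_d(y_2) \subset Q_{2d}(y_1)$ that underwrites the comparison of averages at different centers. Both reduce to elementary $\ell^\infty$ cube geometry; no further structural ingredients beyond the Cauchy--Schwarz inequality and a dyadic decomposition are needed.
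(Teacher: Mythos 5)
Your proof is correct: the paper states Proposition \ref{PMTS300} without proof (it is the classical Campanato--Meyers characterization of H\"older continuity), and your argument is exactly the standard one --- dyadic telescoping of averages via Cauchy--Schwarz to define the continuous representative, then the two-scale comparison $Q_d(y_2)\subset Q_{2d}(y_1)$ for nearby points and a bounded chain for far points. The only point worth flagging is that the integration domain $Q_R(y)$ in the stated hypothesis is evidently a typo for $Q_r(y)$, which you have correctly read as the usual Campanato condition; all cubes you use do lie in $Q_{2R}(z)$, so the hypothesis applies throughout.
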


\begin{proof}[Proof of Theorem \ref{Theorem_of_U}]
Let $u$ be a weak solution of 
\begin{equation*}\label{}
D_{\alpha} \left[ A_{ij}^{\alpha \beta} D_{\beta}u^{j} \right] = D_{\alpha} F^{i}_{\alpha} \quad \text{ in } \quad Q_{r}(z).
\end{equation*}
We first prove  \eqref{Theorem_of_U_Estimate2}. By comparing \eqref{PMT160} and \eqref{PMT170} with \eqref{PHN230} and \eqref{PHN235} respectively, we apply Lemma \ref{PHNS600} and Lemma \ref{PHNS800} respect to the point $z \in Q_{R}$ instead of the origin to find that for a sufficiently small universal constant $\varepsilon \in (0,1]$, if $ \kappa \left( \frac{ r }{ R } \right)^{2\mu} \leq \varepsilon $ then 
\begin{equation}\begin{aligned}\label{PMT220}
& \frac{1}{\rho^{2\mu}}  \mint_{ Q_{\rho}(z) } \left| U  - (U)_{ Q_{\rho}(z) } \right|^{2} \, dx \\
& \quad \leq  \frac{c}{ r^{ 2\mu }}
\mint_{ Q_{r}(z) } \left| U - (U)_{ Q_{r}(z) } \right|^{2} \, dx \\
& \qquad +  \frac{c}{ R^{2\mu} } \left( \kappa \mint_{ Q_{r}(z) } |U|^{2} \, dx 
+ \kappa \| F \|_{L^{\infty}(Q_{r}(z))}^{2} + R^{2\mu} \sup_{ k \in K } [ F ]_{C^{\mu}(Q_{r}^{k}(z))}^{2}    \right),
\end{aligned}\end{equation}
and
\begin{equation}\begin{aligned}\label{PMT230}
\mint_{Q_{\rho}(z) } \left| U  \right|^{2} \, dx
\leq c \left[  \mint_{Q_{r}(z) } \left| U  \right|^{2} \, dx
+   \| F \|_{L^{\infty}(Q_{r}^{}(z) )}^{2} 
+ R^{2\mu} \sup_{ k \in K } [ F ]_{C^{\mu}(Q_{r}^{k}(z))}^{2}     \right],
\end{aligned}\end{equation}
for any $z \in Q_{R}$ and $0 < \rho \leq r$. For the simplicity, set $\bar{\varepsilon} = \left( \kappa^{-1} \varepsilon \right)^{\frac{1}{2\mu}}$ which is a univeral constant.

If $  \bar{\varepsilon} R  < \rho \leq R$ then we have that $ \bar{\varepsilon} R  < \rho \leq r \leq R$. So \eqref{Theorem_of_U_Estimate1} and \eqref{Theorem_of_U_Estimate2} holds when $  \bar{\varepsilon} R  < \rho \leq R$. So suppose that $\rho \leq \bar{\varepsilon} R $.

If $0 < \rho \leq r \leq \bar{\varepsilon} R $ then  \eqref{Theorem_of_U_Estimate1} and   \eqref{Theorem_of_U_Estimate2} holds from \eqref{PMT220} and \eqref{PMT230}. So assume that $0 < \rho \leq \bar{\varepsilon} R  \leq r \leq R$. Then  by \eqref{PMT220} and \eqref{PMT230},
\begin{equation*}\begin{aligned}\label{}
& \frac{1}{\rho^{2\mu}}  \mint_{ Q_{\rho}(z) } \left| U  - (U)_{ Q_{\rho}(z) } \right|^{2} \, dx \\
& \quad \leq  \frac{c}{ R^{ 2\mu }}
\mint_{ Q_{ \bar{\varepsilon}R }(z) } \left| U - (U)_{ Q_{ \bar{\varepsilon}R }(z) } \right|^{2} \, dx \\
& \qquad +  \frac{c}{ R^{2\mu} } \left( \kappa \mint_{ Q_{ \bar{\varepsilon}R }(z) } |U|^{2} \, dx 
+ \kappa \| F \|_{L^{\infty}(Q_{ \bar{\varepsilon}R }(z))}^{2} + R^{2\mu} \sup_{ k \in K } [ F ]_{C^{\mu}(Q_{ \bar{\varepsilon}R }^{k}(z))}^{2}    \right),
\end{aligned}\end{equation*}
and
\begin{equation*}\begin{aligned}\label{}
\mint_{Q_{\rho}(z) } \left| U  \right|^{2} \, dx
& \leq c \left(  \mint_{Q_{ \bar{\varepsilon}R }(z) } \left| U  \right|^{2} \, dx
+   \| F \|_{L^{\infty}(Q_{ \bar{\varepsilon}R }^{}(z) )}^{2} 
+ R^{2\mu} \sup_{ k \in K } [ F ]_{C^{\mu}(Q_{ \bar{\varepsilon}R }^{k}(z))}^{2}     \right).
\end{aligned}\end{equation*}
So from that $\kappa$ and $\bar{\varepsilon}$ is universal, \eqref{Theorem_of_U_Estimate1} and \eqref{Theorem_of_U_Estimate2} holds when $0 < \rho \leq \bar{\varepsilon} R  \leq r \leq R$.
\end{proof}

\begin{proof}[Proof of Corollary \ref{Corollary_of_Du}]
Let $u$ be a weak solution of 
\begin{equation*}\label{}
D_{\alpha} \left[ A_{ij}^{\alpha \beta} D_{\beta}u^{j} \right] = D_{\alpha} F^{i}_{\alpha} \quad \text{ in } \quad Q_{2R}.
\end{equation*}
Since $z \in Q_{R}$ and $0 < \rho \leq r \leq R$ in \eqref{Theorem_of_U_Estimate1} and \eqref{Theorem_of_U_Estimate2} were arbitrary, by Proposition \ref{PMTS300},
\begin{equation}\label{PMT610}
[U]_{C^{\mu}(Q_{R})}^{2} 
 \leq  \frac{c}{R^{2\mu}} \left( \mint_{Q_{2R} } |U|^{2} \, dx 
+ \| F \|_{L^{\infty}(Q_{2R})}^{2} + R^{2\mu} \sup_{ k \in K } [ F ]_{C^{\mu}(Q_{2R}^{k}    )}^{2}  \right),
\end{equation}
and
\begin{equation}\label{PMT615} 
\| U \|_{L^{\infty}(Q_{R})}^{2}
\leq c \left( \mint_{ Q_{2R} } \left| U  \right|^{2} \, dx
+\| F \|_{L^{\infty}(Q_{2R}^{})}^{2} 
+ R^{2\mu} \sup_{ k \in K } [ F ]_{C^{\mu}(Q_{2R}^{k})}^{2}     \right).
\end{equation}

By taking $\zeta = Du$, from Lemma \ref{HODS600} and Lemma \ref{HODS700}, we have that
\begin{equation}\label{PMT620} 
|Du(x)| \leq c \Big[ |U(x)|  + |F(x)| \Big]  
\end{equation}
for any $x \in Q_{R}$ and
\begin{equation}\begin{aligned}\label{PMT630}
& |Du(x) - Du(y)| \\
& \quad \leq c \Big[ |U(y) - U(x)| + |F(y) - F(x)| \Big] \\
& \qquad +  c  \Big[ |U(x)| + |F(x)| \Big] \left[  \left| \pi(x) - \pi(y)\right| +  \left| \pi(x) - \pi(y)\right|^{2}  \right] \\
& \qquad +  c  \Big[ |U(x)| + |F(x)| \Big]  \sum_{1 \leq i,j \leq N}   \sum_{1 \leq \alpha, \beta \leq n  }  \left| A_{ij}^{\alpha \beta}(y)  - A_{ij}^{\alpha \beta}(x) \right|,
\end{aligned}\end{equation}
for any $x, y \in Q_{R}^{l}$ and $l \in K$. We have from Lemma \ref{GSS600} that $\pi \in C^{\mu}(Q_{2R})$ with the estimate that
\begin{equation}\label{PMT640}
\left[ \pi' \right]_{C^{\mu}(Q_{2R})} \leq c R^{- \mu}.
\end{equation}
Also we have that
\begin{equation}\label{PMT650}
R^{\mu} \left[ A_{ij}^{\alpha \beta} \right]_{C^{\mu}(Q_{2R}^{l})} \leq c
\qquad (1 \leq \alpha, \beta \leq n, \ 1 \leq i,j \leq N)
\end{equation}
for any $l \in K$. With \eqref{PMT640} and \eqref{PMT650}, we find from \eqref{PMT630} that
\begin{equation*}\begin{aligned}
\label{}
|Du(x) - Du(y)| &\leq c \bigg[ |x-y|^{\mu}  \big( [U]_{C^{\mu}(Q_{R})} + [F]_{C^{\mu}(Q_{R})} \big)\\ 
&\quad +\left( \frac{ |x-y| }{ R } \right)^{\mu} \big( \| U \|_{L^{\infty}(Q_{R})} + \| F \|_{L^{\infty}(Q_{2R}^{})} \big) \bigg],
\end{aligned}\end{equation*}
for any $x, y \in Q_{R}^{l}$ and $l \in K$. So with \eqref{PMT620}, we find from \eqref{PMT610} and \eqref{PMT615} that 
\begin{equation*}\label{} 
\| Du \|_{L^{\infty}(Q_{R})}^{2}
\leq c \left( \mint_{ Q_{2R} } \left| U  \right|^{2} \, dx
+\| F \|_{L^{\infty}(Q_{2R})}^{2} 
+ R^{2\mu} \sup_{ k \in K } [ F ]_{C^{\mu}(Q_{2R}^{k})}^{2}     \right).
\end{equation*}
and
\begin{equation*}\label{}
[Du]_{ C^{\mu} \left(Q_{R}^{l} \right) }^{2} 
 \leq  \frac{c}{R^{2\mu}} \left( \mint_{Q_{2R} } |U|^{2} \, dx 
+ \| F \|_{L^{\infty}(Q_{2R})}^{2} + R^{2\mu} \sup_{ k \in K } [ F ]_{C^{\mu}(Q_{2R}^{k}    )}^{2}  \right),
\end{equation*}
for any $l \in K$. Since $|U| \leq c \big( |Du| + |F| \big)$ in $Q_{2R}$, Corollary  \ref{Corollary_of_Du} holds.
\end{proof}

\subsection*{Acknowledgement}
Y. Kim was supported by the National Research Foundation of Korea (NRF) grant funded by the Korea Government NRF-2020R1C1C1A01013363.
P. Shin was supported by Basic Science Research Program through the National Research Foundation of Korea(NRF) funded by the Ministry of Education(No. NRF-2020R1I1A1A01066850).

\bibliographystyle{amsplain}

\end{document}